\documentclass[a4paper,11pt]{article}

\usepackage{fullpage}
\usepackage{amsbsy}
\usepackage{latexsym}
\usepackage{amsfonts}
\usepackage{amssymb, stmaryrd, mathrsfs}
\usepackage[usenames]{color}
\usepackage{amsmath,amsthm}
\usepackage{enumerate}

\usepackage{graphicx}

\usepackage[T1]{fontenc}

\usepackage{mathdots}

\def\RR{\rm \hbox{I\kern-.2em\hbox{R}}}
\def\NN{\rm \hbox{I\kern-.2em\hbox{N}}}
\def\ZZ{\rm {{\rm Z}\kern-.28em{\rm Z}}}
\def\CC{\rm \hbox{C\kern -.5em {\raise .32ex \hbox{$\scriptscriptstyle
|$}}\kern
-.22em{\raise .6ex \hbox{$\scriptscriptstyle |$}}\kern .4em}}
\def\vp{\varphi}
\def\<{\langle}
\def\>{\rangle}
\def\wt{\widetilde}
\def\i{\infty}

\def\nl{\newline}
\def\o{\overline}
\def\wt{\widetilde}

\def\cC{{\cal C}}

\def\cO{{\cal O}}
\def\cR{{\cal R}}

\def\R{\mathbb{R}}
\def\N{\mathbb{N}}

\def\E{\mathbb{E}}
\def\P{\mathbb{P}}

\def\Chi{\raise .3ex
\hbox{\large $\chi$}} \def\vp{\varphi}
\def\lsima{\hbox{\kern -.6em\raisebox{-1ex}{$~\stackrel{\textstyle<}{\sim}~$}}\kern -.4em}
\def\lsim{\hbox{\kern -.2em\raisebox{-1ex}{$~\stackrel{\textstyle<}{\sim}~$}}\kern -.2em}
\def\gsim{\hbox{\kern -.2em\raisebox{-1ex}{$~\stackrel{\textstyle>}{\sim}~$}}\kern -.2em}
\def\({\left (}
\def\){\right )}

\newcommand{\be}{\begin{equation}}
\newcommand{\ee}{\end{equation}}
\newcommand{\bea}{$$ \begin{array}{lll}}
\newcommand{\eea}{\end{array} $$}
\newcommand{\bi}{\begin{itemize}}
\newcommand{\ei}{\end{itemize}}
\newcommand{\iref}[1]{(\ref{#1})}

\newtheorem{theorem}{Theorem}[section]
\newtheorem{lemma}[theorem]{Lemma}
\newtheorem{prop}[theorem]{Proposition}
\newtheorem{cor}[theorem]{Corollary}

\theoremstyle{definition}
\newtheorem{definition}[theorem]{Definition}
\newtheorem{example}[theorem]{Example}

\newtheorem{remark}{Remark}[section]
\numberwithin{equation}{section}

\usepackage{algorithm}
\usepackage{algpseudocode}
\algrenewcommand\algorithmicrequire{\textbf{input:}}
\algrenewcommand\algorithmicensure{\textbf{output:}}

\newcommand{\eps}{\varepsilon}

\DeclareMathOperator{\sgn}{sgn}

\DeclareMathOperator{\Span}{Span}

\title{\Large{\textbf{Optimal sampling and Christoffel functions on general domains}}\thanks{}}
\author{Matthieu Dolbeault\thanks{Sorbonne Universit\'e, UPMC Univ Paris 06, CNRS, UMR 7598, Laboratoire Jacques-Louis Lions, 4 place Jussieu, 75005 Paris, France (dolbeault@ljll.math.upmc.fr)}, \ and Albert Cohen\thanks{Sorbonne Universit\'e, UPMC Univ Paris 06, CNRS, UMR 7598, Laboratoire Jacques-Louis Lions, 4 place Jussieu, 75005 Paris, France (cohen@ljll.math.upmc.fr)}}
\date{\today}

\begin{document}
\maketitle

\begin{abstract}
We consider the problem of reconstructing an unknown function $u\in L^2(D,\mu)$
from its evaluations at given sampling points $x^1,\dots,x^m\in D$, where $D\subset \R^d$ is a general
domain and $\mu$ a probability measure. The approximation is picked from a
linear space $V_n$ of interest where $n=\dim(V_n)$. Recent results \cite{CM1, DH,JNZ} have revealed that
certain weighted least-squares methods achieve near best (or instance optimal) approximation 
with a sampling budget $m$ that is proportional to $n$, up to a logarithmic factor $\ln(2n/\eps)$
where $\eps>0$ is a probability of failure. The sampling points should be picked at random
according to a well-chosen probability measure $\sigma$ 
whose density is given by the inverse Christoffel function that depends 
both on $V_n$ and $\mu$. While this approach is greatly facilitated when $D$ and $\mu$
have tensor product structure, it becomes problematic for domains $D$ with arbitrary geometry
since the optimal measure depends on an orthonormal basis of $V_n$ in $L^2(D,\mu)$ which is not explicitly given,
even for simple polynomial spaces. Therefore sampling according to this measure
is not practically feasible. One computational solution recently proposed in \cite{AH} relies 
on using the restrictions of an orthonormal basis of $V_n$ defined on a simpler bounding domain
and sampling according to the original probability measure $\mu$, in turn giving up on the optimal sampling budget $m\sim n$.
In this paper, we discuss practical sampling strategies, which amounts to 
using a perturbed measure $\wt \sigma$ that can be computed in an offline stage,
not involving the measurement of $u$, as recently proposed in \cite{Mig}. 
We show that near best approximation 
is attained by the resulting weighted least-squares method at near-optimal sampling budget and we discuss multilevel approaches that preserve optimality of the cumulated 
sampling budget when the spaces $V_n$ are iteratively enriched. 
These strategies rely on the knowledge of a-priori upper bounds $B(n)$ on the inverse Christoffel function for the space $V_n$ and the domain $D$. We establish bounds of the form $\cO(n^r)$ for spaces $V_n$ 
of multivariate algebraic polynomials, and for general 
domains $D$. The exact growth rate $r$ is established
depending on the regularity of the domain, in particular $r=2$ for domains with Lipschitz boundaries and $r=\frac {d+1}d$ for smooth domains. 

\medskip\noindent
\emph{MSC 2010:} 41A10, 41A65, 62E17, 65C50, 93E24
\end{abstract}

\section{Introduction}

\subsection{Reconstruction from point samples}

The process of reconstructing an unknown function 
$u$ defined on a domain $D\subset \R^d$ from its
sampled values 
\be
u^i=u(x^i)
\ee
at a set of point $x^1,\dots,x^m\in D$
is ubiquitous in data science and engineering. The sampled values 
may be affected by noise, making critical the stability properties 
of the reconstruction process. Let us mention three very different applicative 
instances for such reconstruction problems:
\begin{enumerate}
\item[(i)]
Statistical learning and regression: we observe $m$ independent realizations $(x^i,y^i)$ of a random
variable $z=(x,y)$ distributed according to an unknown measure, where $x\in D$ and $y\in\R$,
and we want to recover a function $x\mapsto v(x)$ that makes $|y-v(x)|$
 as small as possible in some given sense. If we use the quadratic loss $\E(|y-v(x)|^2)$, the minimizer
is given by the regression function 
\be
u(x)=\E(y|x).
\ee
and the observed $y^i$ may be thought of as the observation of $u(x^i)$ affected by noise.
\item[(ii)]
State estimation from measurements: the function $u$ represents the distribution of a physical
quantity (temperature, quantity of a contaminant, acoustic pressure) 
in a given spatial domain $D$ that one is allowed to measure by sensors placed
at $m$ locations $x^1,\dots,x^m$. These measurements can be affected by noise
reflecting the lack of accuracy of the sensors.
\item[(iii)]
Design of physical/computer experiments: $u$ is a quantity of interest that depends on the solution $f$ to a parametrized physical
problem. For example, $f=f(x)$ could be the solution to a PDE that depends on a vector $x=(x_1,\dots,x_d)\in D$ of $d$ physical parameters,
and $u$ could be the result of a linear form $\ell$ applied to $f$, that is, $u(x)=\ell(f(x))$.
We use a numerical solver for this PDE as a black box to evaluate $f$, and therefore $u$,
at $m$ chosen parameter vectors $x^1,\dots, x^m\in D$, and we now want 
to approximate $u$ on the whole domain $D$ from these computed values $u(x^i)$. 
Here, the discretization error of the solver may be considered as a noise affecting the true value.
\end{enumerate}

Contrarily to statistical learning, in the last two applications (ii) and (iii) the positions of the sample points $x^i$ are not realization of an 
unknown probability distribution. They can be selected by the user, which brings out the problem of 
choosing them in the best possible way. Indeed, measuring $u$ at the sample points may be costly: in (ii) 
we need a new sensor for each new point, and in (iii) a new physical experiment or run of a numerical solver.
Moreover, in certain applications, one may be interested in reconstructing many different instances of functions $u$.
Understanding how to sample in order to achieve the best possible trade-off between the sampling budget and the reconstruction performance
is one main motivation of this work. We first make our objective more precise
by introducing some benchmarks for the performance of the reconstruction process and sampling budget.

\subsection{Optimality benchmarks}
\label{optbench}

We are interested in controlling the distance 
\be
d(u,\wt u):=\|u-\wt u\|,
\ee
between $u$ and its reconstruction $\wt u=\wt u(u^1,\dots,u^m)$, measured in some given
norm $\|\cdot\|=\|\cdot\|_V$, where $V$ is a Banach function space that contains $u$.

For a given numerical method, the derivation of an error bound  
is always tied to some prior information on $u$. One most common way to 
express such a prior is in terms of membership of $u$ to a restricted class of functions,
for example a smoothness class. One alternate way is to express the prior in terms of 
approximability of $u$ by particular finite dimensional spaces. It is well-known that the
two priors are sometimes equivalent: many classical smoothness classes can be characterized in 
terms of approximability in some given norm by classical approximation spaces such as algebraic 
or trigonometric polynomials, splines or wavelets \cite{DL}.

In this paper, we adopt the second point of view, describing $u$ by its closeness
to a given subspace $V_n\subset V$ of dimension $n$: defining the best approximation error
\be
e_n(u):=\min_{v\in V_n} \|u-v\|,
\ee
our prior is that $e_n(u)\leq \eps_n$ for some $\eps_n>0$. One of our motivations is the rapidly expanding
field of {\it reduced order modeling} in which one searches for 
approximation spaces $V_n$ which are optimally designed to approximate families of solutions 
to parametrized PDEs. Such spaces differ significantly from the above-mentioned
classical examples. For example in the {\it reduced basis method}, 
they are generated by particular instances of 
solutions to the PDE for well chosen parameter values. We refer to \cite{CD}
for a survey on such reduced modeling techniques and their approximation capability.

In this context, one first natural objective is to build a reconstruction map
\be
(u^1,\dots,u^m) \mapsto \wt u_n \in V_n,
\ee
that performs almost as good as the best approximation error. We say that
a reconstruction map taking its value in $V_n$ is {\it instance optimal} with
constant $C_0\geq 1$ if and only if 
\be
\|u-\wt u_n\| \leq C_0\,e_n(u),
\label{instance}
\ee
for any $u\in V$. 

Obviously, instance optimality implies that if $u\in V_n$, the reconstruction map should
return an exact reconstruction $\wt u_n=u$. For this reason, instance optimality can only be hoped for if
the sampling budget $m$ exceeds the dimension $n$. This leads us to introduce a second notion 
of optimality: we say that the 
sample is {\it budget optimal} with constant $C_1\geq 1$ if
\be
m\leq C_1\,n.
\label{budget}
\ee

Let us stress that in many relevant settings, we do not work with a single space $V_n$
but a sequence of nested spaces 
\be
V_1\subset V_2 \subset \dots\subset V_n\subset \dots
\ee
so that $e_n(u)$ decreases as $n$ grows. Such a hierarchy could either be fixed in advance
(for example when using polynomials of degree $n$), or adaptively chosen as we collect more samples
(for example when using locally refined piecewise polynomials or finite element spaces).
Ideally, we may wish that the constants $C_0$ and $C_1$ are independent of $n$.
As it will be seen, a more accessible goal is that only one of the two constants is independent of $n$, while the other grows at most logarithmically in $n$.

Another way of relaxing instance optimality is to request the weaker property of {\it rate optimality},
which requires that for any $s>0$ and $u\in V$,
\be
\sup_{n\geq 1}\,n^s \,\|u-\wt u_n\|_V \leq C \,\sup_{n\geq 1}\,n^s\, e_n(u),
\ee
where $C\geq 1$ is a fixed constant. In other words, the approximant produced by the reconstruction method should converge
at the same polynomial rate as the best approximation. 

In the context where the spaces $V_n$ are successively refined, even if the reconstruction method is instance and budget optimal
for each value of $n$, the cumulated sampling budget until the $n$-th refinement step is in principle
of the order
\be
m(n)\sim 1+2+\dots+n \sim n^2,
\ee
if samples are picked independently at each step. A natural question is whether the samples used until stage $k$ can be,
at least partially, recycled for the computation of $\wt u_{k+1}$, in such a way that the cumulated sampling budget
$m(n)$ remains of the optimal order $\cO(n)$. This property will be ensured for example if for each $n$, the samples
are picked at points $\{x^1,\dots,x^{m(n)}\}$ that are the sections of a unique infinite sequence $\{x^m\}_{m\geq 1}$,
with $m(n)\sim n$, which means that all previous samples are recycled. We refer to this property as {\it hierarchical} sampling. It is also referred to as {\it online machine learning}
in the particular above-mentioned applicative context (i). 

\subsection{Objectives and layout}

The design of sampling and reconstruction strategies that combine budget 
and instance (or rate) optimality, together with the above progressivity prescription, turns out
to be a difficult task, even for very classical approximation spaces $V_n$ such as polynomials.

In the next section \S 2, we illustrate this difficulty by first discussing the example of reconstruction 
by {\it interpolation} for which the sampling budget is optimal but instance optimality with error measured
in the $L^\infty$ norm generally fails by a large amount. We then recall recent results
\cite{ABC,CM1,DH,JNZ} revealing that one can get much closer to these optimality objectives
by {\it weighted least-squares} reconstruction methods. In this case, we estimate the approximation error
in $V=L^2(D,\mu)$ where $\mu$ is an arbitrary but fixed probability measure.
The sampling points are picked at random
according to a different probability measure $\sigma^*$ that depends on $V_n$ and $\mu$:
\be
d\sigma^*(x)=\frac {k_n(x)}{n}\,d\mu(x).
\ee
Here $k_n$ is the {\it inverse Christoffel function} defined by 
\be
k_n(x)=\sum_{j=1}^n |L_j(x)|^2,
\label{invchris}
\ee
where $(L_1,\dots,L_n)$ is any $L^2(D,\mu)$-orthonormal basis of $V_n$. By Cauchy-Schwarz inequality, it is readily seen that this function is characterized by the
extremality property
\be
k_n(x)=\max_{v\in V_n}\frac{|v(x)|^2}{\|v\|^2},
\label{knext}
\ee
where $\|v\|:=\|v\|_V=\|v\|_{L^2(D,\mu)}$.
Then, instance optimality is achieved in a probabilistic sense
with a sampling budget $m$ that is proportional to $n$, up to a logarithmic factor $\ln(2n/\eps)$
where $\eps>0$ is a probability of failure which comes as an additional term in the
instance optimality estimate
\be
\E(\|u-\wt u_n\|^2)\leq C_0\,e_n(u)^2+\cO(\eps ).
\ee
 It is important to notice that $\sigma^*$ differs from $\mu$
and that the standard least-squares method using a sample drawn according to $\mu$ is
generally {\it not} budget optimal in the sense that instance optimality requires $m$  
to be larger than 
the quantity
\be
K_n:=\|k_n\|_{L^{\infty}}=\sup_{x\in D} |k_n(x)|=\max_{v\in V_n}\frac{\|v\|_{L^{\infty}}^2}{\|v\|_{L^2}^2},
\ee
which may be much larger than $n$, for instance $\cO(n^2)$ or worse, see \cite{CDL}
as well as \S 5.

While these results are in principle attractive since they apply to arbitrary spaces $V_n$,
measures $\mu$ and domains $D$, the proposed sampling strategy is highly facilitated
when $D$ is a tensor-product domain and $\mu$ is the tensor-product of a simple univariate
measure, so that an $L^2(D,\mu)$-orthonormal basis of $V_n$ can be explicitly provided. 
This is the case for example when using multivariate algebraic or trigonometric polynomial
spaces with $\mu$ being the uniform probability measure on $[-1,1]^d$ or $[-\pi,\pi]^d$.
For a general domain $D$ with arbitrary - possibly irregular - geometry, 
the orthonormal basis cannot be explicitly computed, even for simple polynomial spaces. 
Therefore sampling according to the optimal measure $\sigma^*$ is not feasible.

Non-tensor product domains $D$ come out naturally in all the above mentioned 
applicative settings (i)-(ii)-(iii). For example, in design of physical/computer experiment, 
this reflects the fact that while the individual parameters $x_j$ could range in intervals $I_j$ for $j=1,\dots,d$,
not all values $x$ in the rectangle $R=I_1\times \dots\times I_d$ are physically admissible.
Therefore, the function $u$ is only accessible and searched for
in a limited domain $D\subset R$.

One practical solution proposed in \cite{AH} consists in sampling according to 
the measure $\mu$
and solving the least-squares problem using the restriction of an orthonormal basis of $V_n$ 
defined on a simpler tensor product bounding domain, which generally gives rise to a frame. This approach is feasible for example
when $\mu$ is the uniform probability measure and when the inclusion of a point in $D$ can be 
numerically tested. Due to the use of restricted bases, the resulting Gramian matrix
which appears in the normal equations is ill-conditioned or even singular, 
which is fixed by applying a pseudo-inverse after thresholding the smallest singular values
at some prescribed level. Budget optimality is generally lost in this approach
since one uses $\mu$ as a sampling measure.

In this paper, we also work under the assumption that we are able to sample according to $\mu$,
but we take a different path, which is exposed in \S 3. In an {\it offline} stage, 
we compute an approximation $\wt k_n$ to the inverse Christoffel function,
which leads to a measure $\wt \sigma$ that may
be thought as a perturbation of the optimal measure $\sigma^*$. 
We may then use $\wt \sigma$ to define the sampling points 
$\{x^1,\dots,x^m\}$ 
and weights. In the {\it online} stage, we 
perform the weighted least-squares reconstruction strategy
based on the measurement of $u$ at these points. 
Our first result is that if $\wt k_n$ is equivalent to $k_n$,
we recover the stability and instance optimality results from \cite{CM1}
at near-optimal sampling budget $m\sim n\ln(2n/\eps)$.

One approach for computing $\wt k_n$, recently proposed
in \cite{Mig}, consists in drawing a first sample $\{z^1,\dots,z^M\}$ according to $\mu$
and defining $\wt k_n$ as the inverse Christoffel function with respect to the discrete measure 
associated to these points. In order to ensure an equivalence between $k_n$ and $\wt k_n$ with high probability, 
the value of $M$ needs to be chosen larger than $K_n$ which is unknown
to us. This can be ensured by asking that $M$ is larger than a known
upper bound $B(n)$ for $K_n$. The derivation of such bounds
for general domains is one of the objectives of this paper. 
We also propose an empirical strategy 
for choosing $M$ that does not require the knowledge of 
an upper bound and appears to be effective in our numerical tests.
In all cases, the size $M$ of the offline sample 
could be of order substantially larger than $\cO(n)$. 
However, this first set of points is only used
in the offline stage to perform computations that produce the perturbed measure $\wt \sigma$, and
 {\it not} to evaluate the function $u$ which, as previously explained, is the costly aspect in the 
targeted applications and could also occur for many instances of $u$. These more costly evaluations of $u$
only take place in the online stage  
at the $x^i$, therefore at near-optimal sampling budget. 

In the case where $K_n$, or its available bound $B(n)$, grows very fast with $n$, the complexity of the offline stage in this approach becomes itself prohibitive. In order to mitigate this defect, we introduce in \S 4 a multilevel approach where the approximation $\wt k_n$ of $k_n$ is produced by successive space refinements 
\be
V_{n_1}\subset \dots \subset V_{n_q}, \quad n_q=n,
\ee
which leads to substantial computational savings under mild assumptions. This setting also allows us to produce nested sequences of evaluation points $\{x^1,\dots,x^{m_p}\}$ where $m_p$ grows similar to $n_p$ up to a logarithmic factor, therefore complying with the previously
invoked prescription of hierarchical sampling.

In \S 5 we turn to the study of the inverse Christoffel function $k_n$
in the case of algebraic polynomial spaces on general multivariate domains $D\subset \R^d$. We establish pointwise and global upper and lower bounds for $k_n$ that depend
on the smoothness of the boundary of $D$.
We follow an approach adopted in \cite{Pry} for a particular class of domains with piecewise smooth boundary, namely
comparing $D$ with simpler reference domains for which the inverse Christoffel function can be estimated.
We obtain bounds with growth rate $\cO(n^r)$ where the value $r=2$ for Lipschitz domains and $r=\frac {d+1}d$ for
smooth domains is proved to be sharp. We finally give a systematic approach that also describes the sharp growth rate
for domains with cusp singularities.

We close the paper in \S 6 with various numerical experiments that confirm
our theoretical investigations. In the particular case of multivariate algebraic polynomials,
the sampling points tend to concentrate near to the exiting corner or cusp singularities
of the domain, while they do not at the reintrant singularities, as predicted 
by the previous analysis of the inverse Christoffel function. 

\section{Meeting the optimality benchmarks}

\subsection{Interpolation}

One most commonly used strategy to reconstruct functions from point values
is interpolation. Here we work in the space $V=\cC(D)$ of continuous and bounded
functions equipped with the $L^\infty$ norm. For the given space $V_n$, and
$n$ distinct points $x^1,\dots,x^n\in D$ picked in such way that the map $v\mapsto (v(x^1),\dots,v(x^n))$ is 
an isomorphism from $V_n$ to $\R^n$, we define the corresponding interpolation 
operator $\mathcal I_n: \cC(D)\to V_n$ by the interpolation condition
\be
\mathcal I_n u(x^i)=u(x^i), \quad i=1,\dots,n.
\ee
The interpolation operator is also expressed as
\be
\mathcal I_n u=\sum_{i=1}^n u(x^i)\,\ell_i,
\ee
where $\{\ell_1,\dots,\ell_n\}$ is the Lagrange basis of $V_n$ defined by the conditions $\ell_i(x^j)=\delta_{i,j}$.
Interpolation is obviously budget optimal since it uses $m=n$ points, that is, $C_1=1$ in \iref{budget}.
On the other hand, it does not guarantee instance optimality: the constant $C_0$ in \iref{instance} is
governed by the {\it Lebesgue constant}
\be
\Lambda_n=\| \mathcal I_n\|_{L^\infty\to L^\infty}=\max_{x\in D}\, \sum_{i=1}^n |\ell_i(x)|.
\ee
Indeed, since $\|u-\mathcal I_n u\|_{L^\infty}\leq \|u-v\|_{L^\infty}+\|\mathcal I_nu-\mathcal I_nv\|_{L^\infty}$ for any $v\in V_n$, one has
\be
\|u-\mathcal I_n u\|_{L^\infty} \leq (1+\Lambda_n) e_n(u).
\ee
The choice of the points $x^i$ is critical to control the growth of $\Lambda_n$ with $n$. For example in
the elementary case of univariate algebraic polynomials where $D=[-1,1]$ and $V_n=\P_{n-1}$, it is well known
that uniformly spaced $x^i$ result in $\Lambda_n$ growing exponentially, at least like $2^n$, while the slow growth $\Lambda_n\sim \ln(n)$
is ensured when using the Chebychev points $x^i=\cos\(\frac{2i-1}{2n} \pi\)$
for $i=1,\dots n$.
Unfortunately, there is no general guideline to ensure such a slow growth for more general hierarchies of spaces $(V_n)_{n\geq 1}$
defined on multivariate domains $D\subset \R^d$. As an example, in the simple case of 
the bivariate algebraic polynomials $V_n=\P_p$ where $n=\frac{(p+1)(p+2)}2$ and a general polygonal
domain $D$, a choice of points that would ensure a logarithmic growth of the Lebesgue constant is 
an open problem.

There exists a general point selection strategy that ensures linear behaviour of the Lebesgue constant
for any space $V_n$ spanned by $n$ functions $\{\vp_1,\dots,\vp_n\}$: it consists in choosing $(x^1,\dots,x^n)$
which maximizes over $D^n$ the determinant of the collocation matrix
\be
M(x^1,\dots,x^n)=(\phi_i(x^j))_{i,j=1,\dots,n},
\ee
Since the $j$-th element of the Lagrange basis is given by
\be
\ell_j(x)= \frac{{\rm det}(M(x^1,\dots,x^{j-1},x,x^{j+1},\dots,x^n))}{{\rm det}(M(x^1,\dots,x^n))},
\ee
the maximizing property gives that $\|\ell_j\|_{L^\infty}\leq 1$ and therefore $\Lambda_n \leq n$. In the particular
case of the univariate polynomials where $D=[-1,1]$ and $V_n=\P_{n-1}$, this choice corresponds to the
Fekete points, which maximize  the product $\prod_{i\neq j} (x^i-x^j)$.

While the above strategy guarantees the $\cO(n)$ behaviour of $\Lambda_n$, its
main defect is that it is computationally unfeasible if $n$ or $d$ is large, 
since it requires solving a non-convex optimization problem in dimension $d\,n$.
In addition to this, for a given hierarchy of spaces $(V_n)_{n\geq 1}$, the 
sampling points $S_n=\{x^1,\dots,x^n\}$ generated
by this strategy do not satisfy the nestedness property $S_n\subset S_{n+1}$. 

A natural alternate strategy that ensures nestedness consists in selecting the points by a
stepwise greedy optimization process: given $S_{n-1}$, define the next point $x^{n}$
by maximizing over $D$ the function $x\mapsto {\rm det}(M(x^1,\dots,x^{n-1},x))$.
This approach was proposed in \cite{Magic} in the context of reduced basis approximation
and termed as {\it magic points}. It amounts to solving at each step a non-convex optimization problem in the
more moderate dimension $d$, independent of $n$. However there exists no general bound on $\Lambda_n$
other than exponential in $n$. In the univariate polynomial case, this strategy yields the 
so-called {\it Leja points} for which it is only known that the Lebesgue constant grows sub-exponentially
although numerical investigation indicates that it could behave linearly. In this very 
simple setting, the bound $\Lambda_n\leq n^2$ could be established in \cite{Chkifa},
however using a variant where the points are obtained by projections of the complex 
Leja points from the unit circle to the interval $[-1,1]$.

In summary, while interpolation uses the optimal sampling budget $m=n$, it fails by a large amount
in achieving instance optimality, especially when asking in addition for the nestedness 
of the sampling points, even for simple polynomial spaces.

\subsection{Weighted least-squares}

In order to improve the instance optimality bound, we allow ourselves to collect more data on the function $u$ by increasing the number $m$ of sample points, compared to the critical case $m=n$ studied before, and construct
an approximation $\wt u_n$ by a least-squares fitting procedure. This relaxation of the problem gives more flexibility on the choice of the sample points: for instance, placing two of them too close will only waste one evaluation of $u$, whereas this situation would have caused ill-conditioning and high values of $\Lambda_n$ in interpolation. 
It also leads to more favorable results in terms of instance optimality as we next recall.

Here, and in the rest of this paper, we assess the error in the $L^2$ norm 
\be
\|v\|=\|v\|_{L^2(D,\mu)},
\ee
where $\mu$ is a fixed probability measure, which can be arbitrarily chosen by the user depending
on the targeted application. For example, if the error has the same significance at all points of $D$, one
is naturally led to use the uniform probability measure 
\be
d\mu:=|D|^{-1} dx.
\ee
In other applications such as uncertainty quantification
where the $x$ variable represents random parameters that 
follow a more general probability law $\mu$, the use of this 
specific measure is relevant since the 
reconstruction error may then be interpreted as the mean-square risk
\be
\|u-\wt u_n\|_{L^2(D,\mu)}^2=\E_x(|u(x)-\wt u_n(x)|^2).
\ee
Once the evaluations of $u(x^i)$ are performed, the {\it weighted-least squares methods}
defines $\wt u_n$ as the solution of the minimization problem
\be
\min_{v\in V_n} \sum_{i=1}^m w(x^i) \,|u(x^i)-v(x^i)|^2,
\label{wls}
\ee
where $w(x^1),\dots,w(x^m)>0$ are position-dependent weights. 
The solution to this problem is unique under the assumption that 
no function of $V_n\setminus \{0\}$ vanishes at all the $x^i$. 
Notice that in the limit $m=n$, the minimum in \iref{wls} is zero
and attained by the interpolant at the points $x^1,\dots,x^n$, which as previously discussed suffers from a severe lack of instance optimality. 

The results from \cite{CM1} provide with a general strategy to select the points $x^i$ 
and the weight function $w$ in order to reach instance and budget optimality,
in a sense that we shall make precise. In this approach, the points $x^i$ are drawn at random
according to a probability measure $\sigma$ on $D$, that generally differs from $\mu$,
but with respect to which $\mu$ is absolutely continuous.
One then takes for $w$ the corresponding Radon-Nikodym derivative, so that
\be
w(x)\,d\sigma(x)=d\mu(x).
\label{compatible}
\ee
This compatibility condition ensures that we recover a minimization in 
the continuous norm $\|\cdot\|$ as $m$ tends to infinity:
\be
\frac{1}{m}\sum_{i=1}^m  w(x^i) \,|u(x^i)-v(x^i)|^2\underset{m\rightarrow \infty}{\overset{a.s.}{\longrightarrow}}\int_D w\, |u-v|^2\,d \sigma=\int_D|u-v|^2\,d\mu=\|u-v\|^2.
\ee
Here we may work under the sole assumption that $u$ belongs to the space $V=L^2(D,\mu)$, 
since pointwise evaluations of $u$ and $w$ will be almost surely well-defined. 
In return, since $\wt u_n$ is now stochastic, the $L^2$ estimation error will only be 
assessed in a probabilistic sense, for example by considering the mean-square error,
\be
\E(\|u-\wt u_n\|^2)=\E_{\otimes^m \sigma}(\|u-\wt u_n\|^2)
\ee
The weighted least-square approximation may be viewed as the
orthogonal projection $\wt u_n=P^m_n u$ onto $V_n$ for the discrete $\ell^2$ norm
\be
\|v\|_m^2:=\frac{1}{m}\sum_{i=1}^m w(x^i)\,|v(x^i)|^2,
\label{discrete_norm}
\ee
in the same way that the optimal approximation
\be
u_n:=\underset{v\in V_n}{\arg\min}\,\|u-v\|=P_n u
\ee
is the orthogonal projection for the continuous $L^2(D,\mu)$ norm. 
A helpful object for comparing these two norms on $V_n$ is the Gramian matrix
\be
G:=(\<L_j,L_k\>_m)_{j,k=1,\dots,n},
\label{gramian}
\ee
where $(L_1,\dots,L_n)$ is any $L^2(D,\mu)$-orthonormal basis of $V_n$. Indeed, for all $\delta>0$,
\be
\|G-I\|_{2}\leq \delta\Longleftrightarrow  \ (1-\delta)\|v\|^2\leq \|v\|_m^2\leq (1+\delta)\|v\|^2,\quad v \in V_n,
\label{normequivgen}
\ee
where $\|M\|_2$ denotes the spectral norm of an $n\times n$ matrix $M$.
As noted in $\cite{CDL}$ in the case of standard least-squares, and in \cite{CM1} for the weighted case,
$G$ can be seen as a mean of $m$ independent and identically distributed matrices 
\be
X^i:=(w(x^i)\,L_j(x^i)\,L_k(x^i))_{j,k=1,\dots,n}
\ee
satisfying $\E(X^i)=I$, so $G$ concentrates towards the identity as $m$ grows to infinity. This concentration can be estimated by a matrix Chernoff bound, such as Theorem 1.1 in the survey paper \cite{Tr}. As observed in \cite{CM1}, for the particular value $\delta=\frac 1 2$, this inequality rewrites as follows,
in our case of interest. 

\begin{lemma}
\label{tropp}
For any $\eps>0$, under the sampling budget condition
\be
m \geq \gamma\, \| w\, k_n\|_{L^\infty}\,\ln (2n/\eps),
\label{conditionCM}
\ee
where $\gamma:=\(3/2\,\ln (3/2)-1/2\)^{-1} \approx 9.242$, one has
 $\Pr(\|G-I\|_{2}\leq 1/2)\geq 1-\eps$.
\end{lemma}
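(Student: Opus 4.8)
The plan is to apply the matrix Chernoff bound (Theorem 1.1 of \cite{Tr}) to the sum $Y:=\sum_{i=1}^m X^i=mG$. First I would record the two structural facts that verify its hypotheses. Writing $L(x)=(L_1(x),\dots,L_n(x))^{\top}$, each summand $X^i=w(x^i)\,L(x^i)L(x^i)^{\top}$ is positive semidefinite of rank one, so its only nonzero eigenvalue is $\lambda_{\max}(X^i)=w(x^i)\,|L(x^i)|^2=w(x^i)\,k_n(x^i)$ by the definition \iref{invchris} of the inverse Christoffel function; hence $\lambda_{\max}(X^i)\leq \|w\,k_n\|_{L^\infty}=:R$ almost surely. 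Moreover the compatibility condition \iref{compatible} gives $\E(X^i)=I$, since the $(j,k)$ entry integrates to $\int_D w\,L_jL_k\,d\sigma=\int_D L_jL_k\,d\mu=\delta_{j,k}$. Consequently $\E(Y)=mI$, so both extreme eigenvalues of $\E(Y)$ equal $\mu_{\min}=\mu_{\max}=m$.

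Next I would translate the target event. By \iref{normequivgen}, the inequality $\|G-I\|_2\leq \tfrac12$ is equivalent to all eigenvalues of $Y=mG$ lying in $[\tfrac m2,\tfrac{3m}2]$, that is, to the conjunction of $\lambda_{\min}(Y)\geq \tfrac m2$ and $\lambda_{\max}(Y)\leq \tfrac{3m}2$. Applying the two one-sided Chernoff tails with deviation $\delta=\tfrac12$ and the values $\mu_{\min}=\mu_{\max}=m$ and $R$ just computed yields
\be
\begin{aligned}
\Pr\{\lambda_{\max}(Y)\geq \tfrac{3m}2\}&\leq n\exp\!\Big(-\tfrac{m}{R}\big(\tfrac32\ln\tfrac32-\tfrac12\big)\Big),\\
\Pr\{\lambda_{\min}(Y)\leq \tfrac{m}2\}&\leq n\exp\!\Big(-\tfrac{m}{R}\big(\tfrac12-\tfrac12\ln 2\big)\Big).
\end{aligned}
\ee
The decisive observation is that the upper-tail exponent $\tfrac32\ln\tfrac32-\tfrac12\approx0.108$ is \emph{smaller} than the lower-tail exponent $\tfrac12-\tfrac12\ln2\approx0.153$, so the upper tail is the binding constraint; this is precisely what the constant $\gamma=(\tfrac32\ln\tfrac32-\tfrac12)^{-1}$ is built to neutralize.

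Finally I would close by a union bound. The hypothesis $m\geq \gamma R\ln(2n/\eps)$ rewrites as $\tfrac{m}{R}\big(\tfrac32\ln\tfrac32-\tfrac12\big)\geq \ln(2n/\eps)$, so the upper-tail probability is at most $n\cdot\tfrac{\eps}{2n}=\tfrac\eps2$; since the lower-tail exponent is larger, the same budget forces the lower-tail probability below $\tfrac\eps2$ as well. Summing the two failure probabilities gives $\Pr\{\|G-I\|_2>\tfrac12\}\leq\eps$, which is the claim.

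I expect the only genuinely delicate point to be the bookkeeping in identifying the dominant tail: one must check that $\tfrac32\ln\tfrac32-\tfrac12<\tfrac12-\tfrac12\ln2$ so that the upper tail, and not the lower one, dictates the value of $\gamma$, and one must confirm that Theorem 1.1 of \cite{Tr} applies in the degenerate regime $\mu_{\min}=\mu_{\max}=m$ where the summands are merely rank-one positive semidefinite rather than strictly positive definite. Everything else is a direct substitution into the quoted inequality.
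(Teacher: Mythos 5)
Your proposal is correct and follows exactly the route the paper indicates: the lemma is quoted from \cite{CM1} as a direct consequence of the matrix Chernoff bound (Theorem 1.1 of \cite{Tr}) applied to the rank-one summands $X^i=w(x^i)L(x^i)L(x^i)^\top$ with $\E(X^i)=I$ and $\lambda_{\max}(X^i)\leq \|w\,k_n\|_{L^\infty}$, and your identification of the upper tail exponent $\tfrac32\ln\tfrac32-\tfrac12$ as the binding one (whence $\gamma$) together with the union bound absorbing the factor $2$ in $\ln(2n/\eps)$ is precisely the intended bookkeeping. No gaps.
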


An estimate comparing for the estimator $\|u-\wt u_n\|$ with $e_n(u)$ can be obtained
when imposing that $\|G-I\|_{2}\leq 1/2$, as expressed in the following 
which is proved in \cite{CM1}.

\begin{lemma}
\label{lemcomputation}
One has
\be
\E\(\|u-\wt u_n\|^2 \Chi_{\|G-I\|_{2}\leq 1/2}\) \leq \(1+\frac{4}{m}\|w\,k_n\|_{L^\infty}\)\,e_n(u)^2.
\label{condest}
\ee
\end{lemma}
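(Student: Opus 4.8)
The plan is to split the error by orthogonality in $L^2(D,\mu)$ and control the two resulting contributions separately. Since $u_n=P_nu$ is the $L^2(D,\mu)$-orthogonal projection of $u$ onto $V_n$, the residual $g:=u-u_n$ is orthogonal to $V_n$, while $u_n-\wt u_n\in V_n$; the Pythagorean identity then gives, for each realization of the sample,
\[
\|u-\wt u_n\|^2=\|u-u_n\|^2+\|u_n-\wt u_n\|^2=e_n(u)^2+\|u_n-\wt u_n\|^2.
\]
On the event $\{\|G-I\|_{2}\le 1/2\}$ the eigenvalues of $G$ lie in $[1/2,3/2]$, so $\|\cdot\|_m$ is a genuine norm on $V_n$ and $\wt u_n=P^m_nu$ is well defined there. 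Multiplying by $\Chi$ and taking expectations, and using $\E(e_n(u)^2\,\Chi)\le e_n(u)^2$, it remains to bound $\E(\|u_n-\wt u_n\|^2\,\Chi)$.

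Next I would rewrite $u_n-\wt u_n$ as a discrete projection of the residual. Since $u_n\in V_n$ is fixed by $P^m_n$, one has $u_n-\wt u_n=P^m_nu_n-P^m_nu=-P^m_ng$. Expanding $P^m_ng=\sum_j c_jL_j$ in the orthonormal basis $(L_1,\dots,L_n)$, the normal equations for the discrete projection read $Gc=b$, where $b=(b_k)$ with $b_k=\<g,L_k\>_m=\frac1m\sum_{i=1}^m w(x^i)\,g(x^i)\,L_k(x^i)$. Because the $L_k$ are $L^2(D,\mu)$-orthonormal, $\|u_n-\wt u_n\|^2=\|c\|_{\ell^2}^2=\|G^{-1}b\|_{\ell^2}^2$, and on the event $\|G^{-1}\|_{2}\le 2$, so that
\[
\|u_n-\wt u_n\|^2\,\Chi\le 4\,\|b\|_{\ell^2}^2.
\]

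The heart of the argument is the computation of $\E(\|b\|_{\ell^2}^2)$. The compatibility condition $w\,d\sigma=d\mu$ together with the orthogonality $\<g,L_k\>=0$ shows that each $b_k$ is an average of $m$ i.i.d.\ mean-zero variables $w(x^i)g(x^i)L_k(x^i)$, whence $\E(|b_k|^2)=\frac1m\int_D w^2 g^2 L_k^2\,d\sigma=\frac1m\int_D w\,g^2 L_k^2\,d\mu$. Summing over $k$ and recognizing $\sum_k L_k^2=k_n$ from \iref{invchris}, I get
\[
\E(\|b\|_{\ell^2}^2)=\frac1m\int_D w\,g^2\,k_n\,d\mu\le\frac1m\,\|w\,k_n\|_{L^\infty}\,\|g\|^2=\frac1m\,\|w\,k_n\|_{L^\infty}\,e_n(u)^2.
\]
Combining with the previous display and dropping the nonnegative indicator yields $\E(\|u_n-\wt u_n\|^2\,\Chi)\le\frac{4}{m}\|w\,k_n\|_{L^\infty}e_n(u)^2$, and adding the $e_n(u)^2$ term from the first step gives exactly \iref{condest}.

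The step that needs care, and which drives the final constant, is the vanishing of the mean of $b$: it is precisely the $L^2(D,\mu)$-orthogonality of $g=u-u_n$ to $V_n$ that turns $\E(\|b\|_{\ell^2}^2)$ into a pure variance term of order $\cO(1/m)$ rather than an $\cO(1)$ quantity. Without this cancellation one would only recover a bound with a fixed constant in front of $e_n(u)^2$; it is the interplay of this orthogonality with the compatibility condition $w\,d\sigma=d\mu$ and the identity $\sum_k L_k^2=k_n$ that produces the sharp $\frac{4}{m}\|w\,k_n\|_{L^\infty}$ factor.
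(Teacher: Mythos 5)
Your proof is correct and follows essentially the same route as the paper's source for this lemma (the argument of \cite{CM1}, which the paper itself only cites here but reproduces in adapted form in the proof of Theorem \ref{thmalgo4}): Pythagorean splitting of the error, the bound $\|u_n-\wt u_n\|^2\le 4\,|b|^2$ via $\|G^{-1}\|_2\le 2$ on the event, and the variance computation of $\E\bigl(\sum_k|\<L_k,g\>_m|^2\bigr)$ using the orthogonality $\<g,L_k\>=0$ and the identity $\sum_k L_k^2=k_n$. No gaps.
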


On the other hand, the estimator $\wt u_n$ obtained by solving \iref{wls} is not reliable 
in the event where $G$ becomes singular, which leads to modify its definition in various
ways: 
\begin{enumerate} 
\item
If one is able to compute $\|G-I\|_2$, one may
condition the estimator to the event $\|G-I\|_2\leq\frac{1}{2}$ by defining
\be
\wt u_n^C:=\wt u_n\,\Chi_{\|G-I\|_2\leq 1/2},
\label{uC}
\ee
that is, we take $\wt u_n^C=0$ if $\|G-I\|_2>\frac{1}{2}$. 
\item
If a uniform bound $\|u\|_{L^\infty(D)}\leq \tau$ is known, one may introduce a truncated estimator 
 \be
 \wt u_n^T:=T_\tau\circ \wt u_n,
 \label{uT}
 \ee
where $T_\tau(y):=\min\{\tau,|y|\}\sgn(y)$.
\end{enumerate}

The main results from \cite{CM1}, that we slightly reformulate below, show that these estimators are instance optimal 
in a probabilistic sense. Throughout the rest of the paper, $\gamma$ denotes the same constant as in Lemma \ref{tropp}.

\begin{theorem}
\label{cohenmigliorati}
Under the sampling budget condition 
\be
m \geq \gamma\, \| w \,k_n\|_{L^\infty}\,\ln (2n/\eps),
\label{budgetbound}
\ee
the weighted least-squares estimator satisfies 
\be
\E\,(\|u-\wt u_n\|^2 \Chi_{\|G-I\|_{2}\leq 1/2})\leq \(1+\eta(m)\)\,e_n(u)^2.
\label{boundwls}
\ee
The conditionned and truncated estimators satisfy the convergence bounds
\be
\E\,(\|u-\wt u_n^C\|^2)\leq \(1+\eta(m)\)\,e_n(u)^2+\|u\|^2\,\eps,
\label{bounduC}
\ee
and
\be
\E\,(\|u-\wt u_n^T\|^2)\leq \(1+\eta(m)\)\,e_n(u)^2+4\,\tau^2\,\eps,
\label{bounduT}
\ee
where $\eta(m)=\frac{4}{m}\|w\,k_n\|_{L^\infty}\leq \frac 4 {\gamma\ln (2n/\eps)} \to 0$, as $n\to \infty$ or $\eps\to 0$.
\end{theorem}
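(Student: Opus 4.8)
The plan is to treat the three estimates in turn, using the first as the workhorse for the other two, with the only non-mechanical step being a contraction property of the truncation operator. First I would observe that \iref{boundwls} is nothing but a restatement of Lemma \ref{lemcomputation}: since $\eta(m)=\frac 4m\|w\,k_n\|_{L^\infty}$ by definition, the bound \iref{condest} is literally \iref{boundwls}. No budget condition is even needed for this inequality; the condition \iref{budgetbound} enters only to make $\eta(m)$ small and to control the failure probability in the two remaining estimates.

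Next, for the conditioned estimator I would split the expectation over the event $A:=\{\|G-I\|_2\le 1/2\}$ and its complement. On $A$ one has $\wt u_n^C=\wt u_n$, so $\E(\|u-\wt u_n^C\|^2\Chi_A)=\E(\|u-\wt u_n\|^2\Chi_A)$, which is bounded by $(1+\eta(m))\,e_n(u)^2$ via Lemma \ref{lemcomputation}. On $A^c$ one has $\wt u_n^C=0$ by definition \iref{uC}, hence $\|u-\wt u_n^C\|^2=\|u\|^2$ is deterministic and
\[
\E(\|u-\wt u_n^C\|^2\Chi_{A^c})=\|u\|^2\,\P(A^c)\le \|u\|^2\,\eps,
\]
where the last inequality is exactly the conclusion $\P(A^c)\le\eps$ of Lemma \ref{tropp} under \iref{budgetbound}. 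Summing the two contributions yields \iref{bounduC}.

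For the truncated estimator the key observation, and the only genuinely new ingredient, is that $T_\tau$ is the nonexpansive projection onto the interval $[-\tau,\tau]$, and that the prior bound $\|u\|_{L^\infty(D)}\le\tau$ means $T_\tau(u(x))=u(x)$ pointwise. Consequently, for every $x$,
\[
|u(x)-\wt u_n^T(x)|=|T_\tau(u(x))-T_\tau(\wt u_n(x))|\le |u(x)-\wt u_n(x)|,
\]
so that $\|u-\wt u_n^T\|\le\|u-\wt u_n\|$ on the whole probability space. On $A$ this reduces the analysis to Lemma \ref{lemcomputation} as before, giving $\E(\|u-\wt u_n^T\|^2\Chi_A)\le(1+\eta(m))\,e_n(u)^2$. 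On $A^c$ I would instead use the crude bound coming from truncation directly: since $|\wt u_n^T(x)|\le\tau$ and $|u(x)|\le\tau$, the integrand is at most $(2\tau)^2$, and as $\mu$ is a probability measure $\|u-\wt u_n^T\|^2\le 4\tau^2$; hence $\E(\|u-\wt u_n^T\|^2\Chi_{A^c})\le 4\tau^2\,\P(A^c)\le 4\tau^2\,\eps$, again by Lemma \ref{tropp}. Adding the two pieces gives \iref{bounduT}.

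Finally, the asymptotic statement on $\eta(m)$ is immediate from the budget condition: plugging $m\ge\gamma\,\|w\,k_n\|_{L^\infty}\ln(2n/\eps)$ into $\eta(m)=\frac 4m\|w\,k_n\|_{L^\infty}$ gives $\eta(m)\le\frac{4}{\gamma\ln(2n/\eps)}$, which tends to $0$ as $n\to\infty$ or $\eps\to0$. I do not anticipate any real obstacle: the entire argument is a decomposition over the good and bad events, with Lemma \ref{lemcomputation} handling the good event and Lemma \ref{tropp} controlling the measure of the bad one. The single point requiring care is verifying that truncation can only decrease the error on the good event (so that no loss is incurred there) while keeping it bounded by $2\tau$ on the bad event.
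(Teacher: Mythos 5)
Your proposal is correct and follows essentially the same route as the paper: identify \iref{boundwls} with Lemma \ref{lemcomputation}, then split over the event $\{\|G-I\|_2\leq 1/2\}$ and its complement, using Lemma \ref{tropp} to bound the probability of the bad event and the crude bounds $\|u-\wt u_n^C\|^2=\|u\|^2$ and $\|u-\wt u_n^T\|^2\leq 4\tau^2$ there. The only difference is that you spell out the nonexpansiveness of $T_\tau$ (and $T_\tau(u)=u$ under $\|u\|_{L^\infty}\leq\tau$) to justify $\|u-\wt u_n^T\|\leq\|u-\wt u_n\|$ on the good event, a step the paper states without proof.
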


\begin{proof} 
The bound \iref{boundwls} follows directly from Lemma \ref{lemcomputation} and the assumption on $m$.
In the event $\|G-I\|_2>\frac{1}{2}$, of probability less than $\eps$ by Lemma \ref{tropp}, one can use the bounds \be
\|u-\wt u_n^C\|^2=\|u\|^2\quad\text{and}\quad\|u-\wt u_n^T\|^2\leq 4\tau^2.
\ee
 Otherwise, one has
 \be\|u-\wt u_n^C\|^2\leq \|u-\wt u_n\|^2\quad\text{and}\quad\|u-\wt u_n^T\|^2\leq \|u-\wt u_n\|^2.
 \ee
This leads to \iref{bounduC} and \iref{bounduT}.
\end{proof}

\begin{remark}
The above result shows that the estimators $\wt u_n^C$ and 
$\wt u_n^T$ achieve instance optimality in expectation up to 
additional error terms of order $\cO(\eps)$, accounting
for the event $\{\|G-I\|_{2}>1/2\}$. Note that $\eps$ only
influences the constraint on the sampling budget logarithmically. In particular, if $e_n(u)$
decreases like $n^{-r}$ for some $r>0$, these estimators are rate optimal
by taking $\eps$ less than $n^{-2r}$, which thus affects the constraint on sampling budget
by a factor $\cO(\ln(n))$. 
\end{remark}

\begin{remark} 
\label{remredraw}
One way to achieve instance optimality in expectation without an additional error term 
consists in redrawing the points $\{x^1,\dots,x^m\}$ until one observes that $\|G-I\|_2\leq \frac 1 2$,
as proposed in \cite{HNP}.
Denoting by $u_n^*$ the weighted least-square estimator corresponding to this draw, we
find that under the sampling budget \iref{budgetbound}, one has
\be
\E(\|u-u_n^*\|^2)=\E\(\|u-\wt u_n\|^2\; \Big | \; \|G-I\|_2\leq \frac 1 2\)
\leq \frac 1{1-\eps} \E\(\|u-\wt u_n\|^2\Chi_{\|G-I\|_2\leq \frac 1 2}\),
\ee
and thus
\be
\E(\|u-u_n^*\|^2)\leq \frac 1{1-\eps}(1+\eta(m)) e_n(u)^2.
\label{instancest}
\ee
The sampling budget condition also ensures a probabilistic control on the number 
of required redraws.
\end{remark}

Now the natural objective is to find a weight function $w$ that makes $\|w \,k_n\|_{L^\infty}$ small in order to minimize the sampling budget. Since
\be
\|w \,k_n\|_{L^\infty}\geq \int_D w \,k_n\, d\sigma=\int_D k_n\, d\mu=n,
\ee
with equality attained for the weight function
\be
w^*:=\frac{n}{k_n}=\frac{n}{\sum_{j=1}^n|L_j|^2},
\ee
this theorem shows that the choice of sampling measure
\be
d\sigma^* =\frac{1}{w^*}\,d\mu=\frac{k_n}{n}\,d\mu
\ee
 is optimal, in the sense that the above instance optimality results are achieved
 with a near-optimal sampling budget $m\sim n$ up to logarithmic
 factors.
 
As already explained in the introduction, when working on a general domain $D$,
we face the difficulty that the orthonormal basis $(L_1,\dots, L_n)$
cannot be exactly computed, and therefore the optimal $w^*$ and $\sigma^*$
are out of reach. The next section proposes computable alternatives
$\wt w$ and $\wt \sigma$ that still yield similar instance optimality results at
near-optimal sampling budget.

\section{Near-optimal sampling strategies on general domains}

\subsection{Two steps sampling strategies}

The sampling and reconstruction strategies that we discuss proceed in two steps:
\begin{enumerate}
\item In an offline stage we search for an approximation 
to the Christoffel function 
$k_n$. For this purpose, we sample $z^1,\dots,z^M\in D$ according to $\mu$,
use these sampling points to compute an orthonormal basis $(\wt L_1,\dots,\wt L_n)$ with respect to the induced discrete inner product. The approximation to the Christoffel function 
is then $\wt k_n=\sum_{j=1}^n |\wt L_j|^2$. As we explain further, one objective is
to guarantee that $\wt k_n$ and $k_n$ are pointwise equivalent.
We define the sampling measure $\wt \sigma$
as proportional to $\wt k_n \,\mu$  and draw the points $x^1,\dots,x^m$ 
according to this measure.
\item In an online stage, we evaluate $u$ at the sampling points 
$x^i$ and construct an estimate $\wt u_n$ by the weighted least-squares method.
\end{enumerate}

In the offline stage $M$ could be much larger than $n$, however it should be
understood that the function $u$ is only evaluated in the online stage at the $m$ point $x^i$ which will be seen to have optimal cardinality $m\sim n$ up to logarithmic factors.

The two main requirements in these approaches are the data of a
(non-orthogonal) basis $(\phi_1,\dots,\phi_n)$ of $V_n$ and the ability to sample
according to the measure $\mu$. When $D\subset \R^d$ is a general 
multivariate domain, one typical setting for this second assumption 
to be valid is the following:
\begin{itemize}
\item
There is a set $R$ containing $D$ such that 
$\mu$ is the restriction of a measure $\mu_R$ 
which can easily be sampled.
\item
Membership of a point $x$ to the set $D$ can be
efficiently tested, that is, $\Chi_D$ is easily computed.
\end{itemize}
This includes for instance the uniform probability measure on 
domains described by general systems of algebraic inequalities (such
as polyhedrons, ellipsoids..), by including such domains $D$
in a rectangle $R=I_1\times \dots \times I_d$ on which sampling according to 
the uniform measure can be done componentwise. Then
the $z^i$ are produced by sampling according to $\mu_R$
and rejecting the samples that do not belong to $D$.
The offline stage is described more precisely as follows.
\nl
\nl
{\bf Algorithm 1.} Draw a certain  number $M$ of points  $z^1, \dots, z^{M}$ independently according to $\mu$, and construct from $(\phi_j)_{j=1,\dots,n}$ an
orthonormal basis $(\wt L_j)_{j=1,\dots,n}$ of $V_{n}$
with respect to the inner product
\be
\<u,v\>_M:=\frac{1}{M}\,\sum_{i=1}^{M}\,u(z^i)\,v(z^i).
\label{Minner}
\ee
Then define
\be
\wt k_n(x)=\sum_{j=1}^n |\wt L_j(x)|^2,
\ee
the approximate inverse Christoffel function, and the corresponding sampling measure
\be
d\wt \sigma:=\alpha\,\frac {\wt k_n}n \,d\mu,
\ee
where $\alpha$ is the normalization factor such that $\alpha \int_D \wt k_n \,d\mu=n$. 
\nl

Note that the factor $\alpha$ is unknown to us but its value is not needed in typical sampling strategies, such as
rejection sampling or MCMC. In contrast to $k_n$, the function $\wt k_n$ is stochastic since it depends on the
drawing of the $z^i$.  In the online stage, we sample $x^1,\dots,x^m$ independently according to $d\wt \sigma$.
We then measure $u$ at the points $x^i$, and define
the estimator $\wt u_n\in V_n$ as the solution to 
the weighted least-squares problem 
\be
\min_{v\in V_n} \sum_{i=1}^m \wt w(x^i)\,|u(x^i)-v(x^i)|^2, 
\label{wls2}
\ee
with $\wt w= \frac{n}{\alpha \wt k_n}$.
This least-squares problem can be solved explicitly by computing $\wt u_n=P^m_nu$ as the orthogonal projection of $u$ on $V_n$ with respect to the inner product from \iref{discrete_norm}
\be
\<u,v\>_m:=\frac 1 m\sum_{i=1}^m \wt w(x^i)\,u(x^i)\,v(x^i).
\label{minner}
\ee

\begin{remark}
There are now two levels of stochasticity: the draw of the $z^i$ and the subsequent draw of the $x^i$.
We sometimes use the symbols $\E_z$ and $\Pr_z$ referring to the first draw, and $\E_x$ and $\Pr_x$ 
referring to the second draw given the first one, while $\E$ and $\Pr$ refer to both draws.
\end{remark}

We keep the notations $G$ and $\wt u_n^T$ from \iref{gramian} and \iref{uT}.
In the following section, we establish instance optimal convergence results under near optimal sample
complexity $m$ similar to \iref{boundwls} and \iref{bounduT} in Theorem \ref{cohenmigliorati}.
On the other hand we do not consider the conditioned estimator $\wt u_n^C$ any further since we do not have access to the 
matrix $G$ which would require the knowledge of the functions $L_j$. The derivation of a computable
estimator that satisfies a similar estimate as $\wt u_n^C$ is an open question.
We also discuss the required sample complexity $M$ of the offline stage. 

\subsection{Convergence bounds and sample complexity}

Our principle objective is to ensure the uniform framing
\be
c_1 \,k_n(x) \leq \wt k_n(x)\leq  c_2 \,k_n(x),  \quad x\in D,
\label{compk}
\ee
for some known constants $0<c_1\leq c_2$. Our motivation 
is that instance optimal convergence bounds with near-optimal 
sampling budget hold under this framing, as expressed by the following result.

\begin{theorem}
\label{theoframe}
Assume that \iref{compk} holds for some $0<c_1\leq c_2$ and let $c=\frac {c_2}{c_1}\geq 1$. Then, under the sampling budget condition 
\be
m\geq c\,\gamma \,n\,\ln(2n/\eps),
\label{coptsamplewithc}
\ee
one has ${\Pr}_x \(\|G-I\|_2\geq \frac 1 2\) \leq \eps$. In addition, one has the convergence bounds
\be
\E_x\(\|u-\wt u_n\|^2 \Chi_{\|G-I\|_2\leq \frac 1 2}\)\leq (1+\eta(m)) \,e_n(u)^2,
\label{bound33}
\ee
and
\be
\E_x(\|u-\wt u_n^T\|^2)\leq (1+\eta(m))\, e_n(u)^2+ 4\,\eps \,\tau^2,
\label{bound3}
\ee
where $\eta(m)=4\, c\,\frac{n}{m}\leq\frac {4}{\gamma\ln(2n/\eps)}$.
\end{theorem}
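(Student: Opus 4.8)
The plan is to reduce the statement to the results already established in the previous subsection—Lemma \ref{tropp}, Lemma \ref{lemcomputation} and the argument proving Theorem \ref{cohenmigliorati}—by working \emph{conditionally} on the offline sample $z^1,\dots,z^M$. Once this sample is fixed, the functions $\wt L_j$, the approximate Christoffel function $\wt k_n$, the sampling measure $\wt\sigma$ and the weight $\wt w=\frac{n}{\alpha\wt k_n}$ become deterministic, and the online points $x^1,\dots,x^m$ are drawn i.i.d.\ according to $\wt\sigma$. Since $\wt w\,d\wt\sigma=d\mu$ holds by construction, the compatibility condition required in those earlier results is satisfied with the pair $(\sigma,w)$ replaced by $(\wt\sigma,\wt w)$, which is precisely why the conclusions are phrased in terms of $\Pr_x$ and $\E_x$.

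The only genuinely new ingredient is a bound on the quantity $\|\wt w\,k_n\|_{L^\infty}$ that governs the budget in those lemmas. First I would integrate the framing \iref{compk} against $\mu$: using the normalization $\int_D k_n\,d\mu=\sum_{j=1}^n\|L_j\|^2=n$, this gives $c_1 n\leq\int_D\wt k_n\,d\mu\leq c_2 n$, hence the (unknown) normalization constant obeys $\frac{1}{c_2}\leq\alpha\leq\frac{1}{c_1}$. Writing $\wt w\,k_n=\frac{n}{\alpha}\,\frac{k_n}{\wt k_n}$ and invoking the pointwise lower bound $\wt k_n\geq c_1 k_n$ from \iref{compk}, I obtain the pointwise estimate $\wt w\,k_n\leq\frac{n}{\alpha c_1}\leq\frac{c_2}{c_1}\,n=c\,n$, so that $\|\wt w\,k_n\|_{L^\infty}\leq c\,n$.

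With this estimate, the budget condition \iref{coptsamplewithc} immediately implies $m\geq\gamma\,\|\wt w\,k_n\|_{L^\infty}\,\ln(2n/\eps)$, so Lemma \ref{tropp} yields $\Pr_x(\|G-I\|_2>\tfrac12)\leq\eps$, which is the asserted probability bound. Lemma \ref{lemcomputation} then gives \iref{bound33} with the multiplicative constant $1+\frac{4}{m}\|\wt w\,k_n\|_{L^\infty}\leq 1+\frac{4cn}{m}=1+\eta(m)$, and the inequality $\eta(m)\leq\frac{4}{\gamma\ln(2n/\eps)}$ is just a restatement of \iref{coptsamplewithc}. Finally \iref{bound3} follows exactly as in Theorem \ref{cohenmigliorati}: on the event $\{\|G-I\|_2\leq\tfrac12\}$ truncation is non-expansive, so $\|u-\wt u_n^T\|\leq\|u-\wt u_n\|$ and \iref{bound33} applies, while on the complementary event, of $\Pr_x$-probability at most $\eps$, one uses the crude bound $\|u-\wt u_n^T\|^2\leq 4\tau^2$.

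I do not expect a real obstacle here. The one step that needs care is the control of the unknown normalization $\alpha$, which is exactly what converts the framing ratio $c=c_2/c_1$ into the budget factor $c$ appearing in \iref{coptsamplewithc}; and one must keep the conditioning on the offline draw explicit throughout, since $\wt w$ and $\wt\sigma$ are themselves random through the sample $z^1,\dots,z^M$, so that the assertions are to be read as conditional statements valid whenever the realized $\wt k_n$ satisfies \iref{compk}.
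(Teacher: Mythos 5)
Your proposal is correct and follows essentially the same route as the paper: the whole content is the estimate $\|\wt w\,k_n\|_{L^\infty}\leq c\,n$ (the paper gets it by writing $n/\alpha=\int_D\wt k_n\,d\mu\leq c_2\int_D k_n\,d\mu$ and $\|k_n/\wt k_n\|_{L^\infty}\leq c_1^{-1}$, which is the same computation as your two-sided bound on $\alpha$), after which Lemma \ref{tropp} and Theorem \ref{cohenmigliorati} are invoked verbatim. Your explicit remark that everything is conditional on the offline draw is a correct reading of the $\Pr_x$, $\E_x$ notation.
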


\noindent
\begin{proof}
It is an immediate application of the results from \S 2.2.
Indeed
\be
\|\wt w \,k_n\|_{L^\infty}=\left\|\frac{n\, k_n}{\alpha \,\wt k_n}\right\|_{L^\infty}=\left\|\frac{k_n}{\wt k_n}\right\|_{L^\infty}\int_D \wt k_n\,d\mu\leq \left\|\frac{k_n}{\wt k_n}\right\|_{L^\infty}\bigg\|\frac{\wt k_n}{k_n}\bigg\|_{L^\infty}\int_D k_n\,d\mu \leq c\,n.
\ee
Therefore, the sampling condition \iref{coptsamplewithc} implies 
$m\geq \gamma \,\|\wt w \,k_n\|_{L^\infty}\ln(2n/\eps)$,
and the results follow by direct application of
Lemma \ref{tropp} and Theorem \ref{cohenmigliorati}. 
\end{proof}

We now concentrate our attention on the offline procedure which should be tuned
in order to ensure that \iref{compk} holds with high probability. For this purpose, we introduce the Gramian matrix 
\be
G_M:=(\<L_j,L_k\>_M)_{j,k=1,\dots n},
\ee
not to be confused with $G$.

\begin{lemma}
\label{firstdraw} For any pair of constants $0<c_1\leq c_2<\infty$, 
the matrix framing property 
\be
c_2^{-1} \,I \leq G_M\leq c_1^{-1} \,I,
\label{matframe}
\ee
implies the uniform framing \iref{compk}.
\end{lemma}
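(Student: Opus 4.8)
The plan is to exploit the fact that both $k_n$ and $\wt k_n$ are \emph{inverse Christoffel functions} of the very same space $V_n$, the only difference being the inner product with respect to which orthonormality is imposed: $k_n$ uses the continuous product of $L^2(D,\mu)$, whereas $\wt k_n$ uses the discrete product $\<\cdot,\cdot\>_M$. Note first that the lower bound $c_2^{-1}I\leq G_M$ with $c_2<\infty$ in \iref{matframe} forces $G_M$ to be positive definite, so that $\<\cdot,\cdot\>_M$ is a genuine inner product on $V_n$ and the basis $(\wt L_j)$ is well defined. I would then invoke the basis-free extremal characterization \iref{knext}, obtained by Cauchy--Schwarz, which holds verbatim for any inner product on $V_n$:
\be
k_n(x)=\max_{v\in V_n}\frac{|v(x)|^2}{\|v\|^2}, \qquad \wt k_n(x)=\max_{v\in V_n}\frac{|v(x)|^2}{\<v,v\>_M}.
\ee
In this way the whole statement reduces to comparing the two denominators, i.e. to an equivalence of norms on the finite-dimensional space $V_n$.

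First I would translate the matrix framing \iref{matframe} into such a norm equivalence. Writing an arbitrary $v\in V_n$ in coordinates $v=\sum_j a_jL_j$ relative to the $L^2(D,\mu)$-orthonormal basis, orthonormality gives $\|v\|^2=\sum_j|a_j|^2$, while by bilinearity $\<v,v\>_M=a^\top G_M a$ (with conjugate transpose in the complex case). The operator inequalities $c_2^{-1}I\leq G_M\leq c_1^{-1}I$ then read, for every $v\in V_n$,
\be
c_2^{-1}\,\|v\|^2\leq \<v,v\>_M\leq c_1^{-1}\,\|v\|^2.
\ee

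Finally I would combine these two ingredients. Inserting the lower bound $\<v,v\>_M\geq c_2^{-1}\|v\|^2$ into the extremal formula for $\wt k_n$ gives $|v(x)|^2/\<v,v\>_M\leq c_2\,|v(x)|^2/\|v\|^2$ for each $v$, and taking the maximum over $v\in V_n$ yields $\wt k_n(x)\leq c_2\,k_n(x)$; symmetrically the upper bound $\<v,v\>_M\leq c_1^{-1}\|v\|^2$ gives $\wt k_n(x)\geq c_1\,k_n(x)$, which is exactly \iref{compk}. An equivalent but slightly more computational route bypasses the extremal characterization: expressing the change of basis $\wt L_j=\sum_k B_{kj}L_k$, discrete orthonormality reads $B^\top G_M B=I$, whence $BB^\top=G_M^{-1}$ and $\wt k_n(x)=\mathbf{L}(x)^\top G_M^{-1}\mathbf{L}(x)$ with $\mathbf{L}(x)=(L_1(x),\dots,L_n(x))^\top$ and $k_n(x)=\mathbf{L}(x)^\top\mathbf{L}(x)$; inverting \iref{matframe} to $c_1 I\leq G_M^{-1}\leq c_2 I$ then gives the framing at once. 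I do not anticipate a serious obstacle: the only points requiring a little care are the reversal of the operator inequality under inversion (equivalently, keeping straight the direction of the two denominator bounds), and the preliminary verification that $\<\cdot,\cdot\>_M$ is positive definite on $V_n$, which is precisely what the finite constant $c_2$ in \iref{matframe} guarantees.
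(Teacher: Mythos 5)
Your proof is correct and follows essentially the same route as the paper's: both arguments rest on the extremal (Cauchy--Schwarz) characterization of $k_n$ and $\wt k_n$, translate the matrix framing \iref{matframe} into the norm equivalence $c_2^{-1}\|v\|^2\leq \|v\|_M^2\leq c_1^{-1}\|v\|^2$ on $V_n$, and compare the two Rayleigh-type quotients. The alternative identity $\wt k_n(x)=\mathbf{L}(x)^\top G_M^{-1}\mathbf{L}(x)$ you mention is a valid shortcut but is not needed.
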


\begin{proof}  We use the fact that, similar to $k_n$, the function $\wt k_n$ is characterized by 
the extremality property
\be
\wt k_n(x)=\max_{v\in V_n}\frac{|v(x)|^2}{\|v\|_M^2}.
\ee
For any $x\in D$ and $v\in V_n$, one has on the one hand
\be
|v(x)|^2 \leq \wt k_n(x)\, \|v\|_M^2 \leq c_1^{-1}\, \wt k_n(x) \,\|v\|^2,
\ee
where the last inequality results from the upper one in \iref{matframe}.
This shows that $c_1\, k_n(x)\leq \wt k_n(x)$. On the other hand, 
using the lower inequality in \iref{matframe}, we find that
\be
|v(x)|^2 \leq k_n(x) \,\|v\|^2 \leq c_2 \,k_n(x) \,\|v\|_M^2,
\ee
which shows that $\wt k_n(x)\leq c_2 \,k_n(x)$. 
\end{proof}

\begin{remark}
The matrix framing \iref{matframe} implies the uniform framing \iref{compk}
but the converse does not seem to hold. Finding an algebraic condition
equivalent to \iref{compk} is an open question.
\end{remark}

Lemma \ref{tropp} indicates that if the amount of offline samples
satisfies the condition
\be
M\geq \gamma \,K_n\ln(2n/\eps), \quad K_n:=\|k_n\|_{L^\infty(D)},
\label{condM1}
\ee
then, we are ensured that
\be
{\Pr}_z\(\|G_M-I\|_2 \geq 1/2\)\leq \eps,
\ee 
and therefore the framing \iref{matframe} holds with 
probability greater than $1-\eps$, for the particular values
$c_1=\frac 2 3$ and $c_2=2$.  Bearing in mind that
$k_n$ is unknown to us, we assume at least that we know an upper estimate
for its $L^\infty$ norm
\be
K_n\leq B(n).
\ee
Explicit values for $B(n)$ for general domains $D$ are established in \S 5
in the case where the $V_n$ are spaces of algebraic polynomials. Therefore, given
such a bound, taking $M$ such that
\be
M\geq \gamma \,B(n)\ln(2n/\eps), 
\label{condM}
\ee
guarantees a similar framing with 
probability greater than $1-\eps$. We obtain the following 
result as a direct consequence of Theorem \ref{theoframe}.

\begin{cor}
\label{corframe}
Assume that the amount of sample $M$ used in the offline stage
described by Algorithm 1 satisfies \iref{condM} for some given
$\eps>0$. Then, under the sampling budget condition 
\be
m\geq 3\,\gamma \,n\,\ln(2n/\eps),
\label{coptsamplewith3}
\ee
for the online stage, the event $E:=\{\|G-I\|_2\leq \frac 1 2 \;{\rm and} \; \|G_M-I\|_2\leq \frac 1 2\}$
satisfies ${\Pr} (E^c) \leq 2\eps$,
In addition, one has the convergence bounds
\be
\E(\|u-\wt u_n\|^2 \Chi_E)\leq (1+\eta(m)) \,e_n(u)^2,
\label{bound332}
\ee
and
\be
\E(\|u-\wt u_n^T\|^2)\leq (1+\eta(m))\, e_n(u)^2+ 8\,\eps \,\tau^2,
\label{bound32}
\ee
where $\eta(m)=12\,\frac{n}{m}\leq\frac {4}{\gamma\ln(2n/\eps)}$.
\end{cor}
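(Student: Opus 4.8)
The plan is to chain together the three ingredients already established — Lemma \ref{tropp} applied to the offline draw, Lemma \ref{firstdraw}, and Theorem \ref{theoframe} — and then to reconcile the two independent sources of randomness (the $z^i$ and the $x^i$) by conditioning on the outcome of the offline stage. First I would treat the offline draw. Since $K_n\leq B(n)$, the hypothesis \iref{condM} gives $M\geq \gamma\,K_n\ln(2n/\eps)$, so Lemma \ref{tropp} applied to the discrete inner product $\<\cdot,\cdot\>_M$ yields ${\Pr}_z(\|G_M-I\|_2\geq \tfrac12)\leq \eps$. On the complementary event $A:=\{\|G_M-I\|_2\leq \tfrac12\}$ the spectrum of $G_M$ lies in $[\tfrac12,\tfrac32]$, which is exactly the matrix framing \iref{matframe} with $c_1=\tfrac23$ and $c_2=2$; Lemma \ref{firstdraw} then delivers the uniform framing \iref{compk} with these constants, so that $c=\tfrac{c_2}{c_1}=3$.

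Next I would condition on $A$ and invoke Theorem \ref{theoframe} with $c=3$. The online budget \iref{coptsamplewith3}, namely $m\geq 3\gamma\,n\ln(2n/\eps)$, is precisely the requirement $m\geq c\,\gamma\,n\ln(2n/\eps)$ of that theorem. Hence, conditionally on any realization of the $z^i$ for which $A$ holds, one obtains ${\Pr}_x(\|G-I\|_2\geq \tfrac12)\leq \eps$ together with the conditional bounds $\E_x(\|u-\wt u_n\|^2\,\Chi_{\|G-I\|_2\leq 1/2})\leq (1+\eta(m))\,e_n(u)^2$ and $\E_x(\|u-\wt u_n^T\|^2)\leq (1+\eta(m))\,e_n(u)^2+4\,\eps\,\tau^2$, where $\eta(m)=4\,c\,\tfrac nm=12\,\tfrac nm$, itself bounded by $\tfrac{4}{\gamma\ln(2n/\eps)}$ under \iref{coptsamplewith3}.

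Finally I would assemble the statement through the tower property $\E=\E_z\E_x$. Writing $B:=\{\|G-I\|_2\leq\tfrac12\}$ so that $E=A\cap B$, the complement splits as $E^c=A^c\cup(A\cap B^c)$; bounding ${\Pr}_z(A^c)\leq\eps$ and $\E_z(\Chi_A\,{\Pr}_x(B^c\mid z))\leq\eps$ gives ${\Pr}(E^c)\leq 2\eps$. For \iref{bound332}, since $\Chi_E=\Chi_A\Chi_B$ and $A$ is $z$-measurable, the conditional bound above integrates to $\E(\|u-\wt u_n\|^2\Chi_E)\leq (1+\eta(m))\,e_n(u)^2$. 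For the truncated estimator I would use the conditional truncated bound on $A$ and the pointwise bound $\|u-\wt u_n^T\|^2\leq 4\tau^2$ on $A^c$; the two failure contributions $4\eps\tau^2$ (from the online event) and $4\tau^2\,{\Pr}_z(A^c)\leq 4\eps\tau^2$ (from the offline event) add to the $8\,\eps\,\tau^2$ of \iref{bound32}. The only real care needed — and the one place an error could creep in — is this bookkeeping of the two nested randomizations: ensuring the failure probabilities combine as $\eps+\eps$ and the truncation residuals as $4\eps\tau^2+4\eps\tau^2$, rather than double-counting a single bad event.
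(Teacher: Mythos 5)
Your proof is correct and follows essentially the same route as the paper: Lemma \ref{tropp} for the offline draw, Lemma \ref{firstdraw} to get the framing with $c_1=\tfrac23$, $c_2=2$ (hence $c=3$), Theorem \ref{theoframe} conditionally on the offline realization, a union bound for $\Pr(E^c)$, and the truncation argument of Theorem \ref{cohenmigliorati} for the $8\,\eps\,\tau^2$ term. The only difference is that you make explicit the conditioning on the $z^i$-draw and the tower-property bookkeeping, which the paper's terser proof leaves implicit; your constants ($\eta(m)=12\,n/m\leq 4/(\gamma\ln(2n/\eps))$ and $4\eps\tau^2+4\eps\tau^2=8\eps\tau^2$) all check out.
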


\noindent
\begin{proof} The estimate on $\Pr(E^c)$ follows by a union bound. Since
$\|G_M-I\|_2\geq \frac 1 2$ ensures the framing \iref{compk} with $c_1=\frac 2 3$ and $c_2=2$,
the bound \iref{bound332} follows from \iref{bound33} in Theorem \ref{theoframe}. Finally, 
the bound \iref{bound32} follows from \iref{bound332} and the probability estimate on $E^c$ by the same
argument as in the proof of Theorem \ref{cohenmigliorati}.
\end{proof}

\subsection{An empirical determination of the value of $M$}

In many situations, the best available bound $B(n)$ on $K_n$ could be overestimated by a large amount. Moreover, the theoretical requirement $M\geq \gamma \,K_n\ln(2n/\eps)$ is only a sufficient condition
that guarantees that $\|G_M-I\|_2\leq \frac 1 2$ with probability larger than $1-\eps$. It could happen that
for smaller values of $M$, the matrix $G_M$ satisfies the framing \iref{matframe} with constants 
$c_1$ and $c_2$ that have moderate ratio $c=\frac {c_2}{c_1}$.

Since the computational cost of the offline stage is proportional to $M$, it would desirable to use such
a smaller value of $M$. If we could compute the matrix $G_M$
it would suffice to raise $M$ until the condition number
\be
\kappa(G_M)=\frac{\lambda_{\max}(G_M)}{\lambda_{\min}(G_M)},
\ee 
has value smaller than a prescribed threshold $c^*>1$, so that \iref{matframe} holds with $c=\kappa(G_M)\leq c^*$.

However, since the exact orthonormal basis elements $L_j$ are generally unknown to us,
we cannot compute the matrix $G_M$. As an alternate strategy, we propose the following 
method that provides with an empirical determination of $M$: 
start from the minimal value $M=n$, and draw points 
$y^1,\dots, y^M$ and $z^1,\dots z^M$ independently according to $\mu$. Then, defining
\be
\<u,v\>_y=\frac{1}{M}\sum_{i=1}u(y^i)\,v(y^i)\quad\text{and}\quad \<u,v\>_z=\frac{1}{M}\sum_{i=1}u(z^i)\,v(z^i),
\ee
compute an orthonormal basis $(L_j^y)$ with respect to $\<\cdot,\cdot\>_y$, and define the test matrix
\be
T:=(\<L_j^y,L_k^y\>_z)_{j,k=1,\dots,n}.
\ee
If $\kappa(T)\geq c^*$, then raise the value of $M$ by some fixed amount, 
and repeat this step until $\kappa(T)\leq c^*$. For this empirically found value $M=M_{\rm emp}(n)$, use the points $\{z^1,\dots,z^M\}$
in the offline stage described by Algorithm 1, and the constant $c=c^*$ in the sampling budget condition 
\iref{coptsamplewithc} used in the online stage.

The rationale for this approach is that if $G_M$ well conditioned 
with high probability, then $T$ should also be, as shown for example by
the following result.
\begin{prop}
If $M$ is chosen in such a way that $\Pr(\kappa(G_M)\geq c)\leq \eps$ for some $c>1$, then
 \be
 \Pr(\kappa(T)\geq c^2)\leq 2\,\eps.
 \ee
\end{prop}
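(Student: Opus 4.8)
The plan is to reduce the statement to a union bound, after establishing the deterministic inequality $\kappa(T)\le \kappa(G_M^y)\,\kappa(G_M^z)$, where $G_M^y:=(\<L_j,L_k\>_y)_{j,k}$ and $G_M^z:=(\<L_j,L_k\>_z)_{j,k}$ denote the Gramians of the \emph{true} $L^2(D,\mu)$-orthonormal basis $(L_1,\dots,L_n)$ with respect to the two empirical inner products. The key observation on the probabilistic side is that $G_M^z=G_M$, and that since the $y^i$ and the $z^i$ form two independent families, each drawn i.i.d. according to $\mu$, the matrices $G_M^y$ and $G_M^z$ are independent and both share the distribution of $G_M$; in particular the hypothesis yields $\Pr(\kappa(G_M^y)\ge c)=\Pr(\kappa(G_M^z)\ge c)\le \eps$.

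First I would introduce the change-of-basis matrix $A=(A_{kj})$ defined by $L_j^y=\sum_k A_{kj}\,L_k$. Expanding the $\<\cdot,\cdot\>_y$-orthonormality of $(L_j^y)$ gives $A^TG_M^yA=I$, hence $A$ is invertible and $A A^T=(G_M^y)^{-1}$, while expanding the entries of the test matrix gives $T=A^TG_M^zA$. Since $XY$ and $YX$ share the same spectrum, the eigenvalues of $T$ coincide with those of $G_M^zAA^T=G_M^z(G_M^y)^{-1}$. As $T$ is symmetric positive definite (being $A^TG_M^zA$ with $A$ invertible and $G_M^z$ positive definite), one has $\kappa(T)=\lambda_{\max}(T)/\lambda_{\min}(T)$, and these extreme eigenvalues are exactly the extreme generalized Rayleigh quotients $v^TG_M^zv/v^TG_M^yv$.

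The core estimate then follows by bounding each quotient separately: $\lambda_{\max}(T)\le \lambda_{\max}(G_M^z)/\lambda_{\min}(G_M^y)$ and $\lambda_{\min}(T)\ge \lambda_{\min}(G_M^z)/\lambda_{\max}(G_M^y)$, whose ratio is precisely $\kappa(G_M^y)\,\kappa(G_M^z)$. Finally, the event $\{\kappa(T)\ge c^2\}$ is contained in $\{\kappa(G_M^y)\ge c\}\cup\{\kappa(G_M^z)\ge c\}$, since otherwise both factors would be strictly below $c$ and their product below $c^2$; a union bound together with the distributional identity noted above then gives $\Pr(\kappa(T)\ge c^2)\le 2\eps$.

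I expect the main subtlety to lie in the deterministic inequality rather than in the probability estimate: one must recognize that the test matrix $T$, although built from the empirically computable basis $(L_j^y)$ and not from the unavailable $(L_j)$, is similar to the product $G_M^z(G_M^y)^{-1}$, so that its condition number is controlled submultiplicatively by the condition numbers of the two sampling Gramians. The remaining probabilistic step is a routine union bound, the only point requiring care being the independence and identical distribution of $G_M^y$ and $G_M^z$, which is what allows the single hypothesis on $G_M$ to be applied to both simultaneously.
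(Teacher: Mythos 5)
Your proposal is correct and follows essentially the same route as the paper: both establish the deterministic inequality $\kappa(T)\le\kappa(G_y)\,\kappa(G_z)$ (the paper via generalized Rayleigh quotients written directly with the norms $\|\cdot\|_y$, $\|\cdot\|_z$, $\|\cdot\|$, you via the change-of-basis matrix $A$ and the similarity of $T$ to $G_z G_y^{-1}$, which is the same computation in matrix form) and then conclude by the observation that $G_y$ and $G_z$ are both realizations of $G_M$ together with a union bound. The only cosmetic remark is that independence of the two draws is not actually needed for the union bound, only that each Gramian has the distribution of $G_M$.
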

\begin{proof}
Since both matrices $G_y=(\<L_j,L_k\>_y)_{j,k=1,\dots,m}$ and $G_z=(\<L_j,L_k\>_z)_{j,k=1,\dots,m}$ are
realizations of $G_M$, we obtain by a union bound that,
with probability at least $1-2\eps$, both $G_y$ and $G_z$ have 
condition numbers less than $c$. Under this event,
\begin{align*}
\lambda_{\max}(T)
&=\sup_{\alpha\in \R^n}\frac{\|\sum_{j=1}^n \alpha_j L_j^y\|_z^2}{|\alpha|^2}\leq \sup_{\alpha\in \R^n}\frac{\|\sum_{j=1}^n \alpha_j L_j^y\|^2}{|\alpha|^2}\,\sup_{v\in V_n}\frac{\|v\|_z^2}{\|v\| ^2}\\
&=\(\inf_{v\in V_n}\frac{\|v\|^2}{\|v\|_y^2}\)^{-1}\,\sup_{v\in V_n}\frac{\|v\|_z^2}{\|v\| ^2}=\frac{\lambda_{\max}(G_z)}{\lambda_{\min}(G_y)},
\end{align*}
and
\begin{align*}
\lambda_{\min}(T)
&=\inf_{\alpha\in \R^n}\frac{\|\sum_{j=1}^n \alpha_j L_j^y\|_z^2}{|\alpha|^2}\geq \inf_{\alpha\in \R^n}\frac{\|\sum_{j=1}^n \alpha_j L_j^y\|^2}{|\alpha|^2}\,\inf_{v\in V_n}\frac{\|v\|_z^2}{\|v\| ^2}\\
&=\(\sup_{v\in V_n}\frac{\|v\|^2}{\|v\|_y^2}\)^{-1}\,\inf_{v\in V_n}\frac{\|v\|_z^2}{\|v\| ^2}=\frac{\lambda_{\min}(G_z)}{\lambda_{\max}(G_y)},
\end{align*}
which implies that $\kappa(T)\leq \kappa(G_y)\,\kappa(G_z) \leq c^2$.
\end{proof}

The above proposition shows that
a good conditioning of $G_M$ with high probability 
implies the same property for $T$. There is of course no theoretical guarantee that the value of $M$ provided by the above empirical approach is sufficient to achieve good conditioning of $G_M$, unless the resulting $M$ satisfies \iref{condM}. However, in the numerical experiments of \S 6, we will check that the values of $M$ for which  $\kappa(T)\leq c$ do also ensure that a similar bound holds for $\kappa(G_M)$.

\section{Multilevel strategies}

The sampling strategy that we have outlined in \S 3
provide instance optimal reconstructions of $u$ with
an optimal sampling budget up to a multiplicative factor $\ln(2n/\eps)$. 
Thus, the execution time of the online stage,
dominated by the $m$ evaluations of $u$ at points $x^i$, cannot be significantly improved. 
On the other hand, the complexity of the offline stage is dominated by the computation of the Gramian matrix 
for deriving the basis $\{\wt L_1,\dots,\wt L_n\}$, and is therefore
of order $\cO(Mn^2)$. In particular, it depends 
linearly on the number of points $M$, 
which could be very large if $K_n$ grows fast, or if 
its available bound $B(n)$ is over-estimated. 

In this section we discuss a multilevel approach aiming at improving this
offline computational cost: we produce an approximation 
to $k_n$ in several iterations, by successive refinements 
of this function as the dimension of $V_n$ increases.
We consider a family of nested spaces $(V_{n_p})_{p\geq 1}$ of increasing dimension $n_p$
and take an orthonormal basis $(L_j)_{j\geq 1}$ adapted to this hierarchy, in the sense that 
\be
V_{n_p}=\Span\{L_1,\dots,L_{n_p}\}, \quad n_p\geq 1.
\ee
As previously, the exact functions $k_{n_p}$ are out of reach, since we do not have access 
to the continuous inner product by which we would compute the basis $(L_j)_{1\leq j \leq n_p}$. 
The offline stage described in \S 3 computes approximations $\wt L_j$ by 
orthogonalizing with respect to a discrete inner product with points $z^i$ drawn according to 
$d\mu$. We know that a more efficient sample for performing this 
orthogonalization should be drawn according
to $d\sigma=\frac {k_{n_p}}{n_p} \,d\mu$ which is however unknown to us. The idea for breaking this dependency loop is to
replace $k_{n_p}$ with $\wt k_{n_{p-1}}$,
which was computed at the previous step. Our analysis of this strategy
is based on the following assumption of proximity between $k_{n_{p-1}}$ and $k_{n_p}$: 

There exists a known constant $\kappa>1$ such that
\begin{equation}
\label{hproperty}
 k_{n_1}(x)\leq 3\,\kappa \,n_1 \text{ and } k_{n_p}(x)\leq k_{n_{p+1}}(x) \leq \kappa\, k_{n_p}(x), \quad p\geq 1,\quad x\in D.
\end{equation}
The validity of this assumption can be studied through lower and upper estimates
for $k_n$, such as those discussed in the next section. For example, Theorem \ref{theopu} allows one to establish \iref{hproperty} for bivariate polynomial spaces of total degree $p$, therefore with $n_p=(p+1)(p+2)/2$, on domains with piecewise smooth boundary. Note that \iref{hproperty} allows up to exponential growth of $K_n$, if we simply take $n_p=p$.  

Assuming that the targeted space $V_n$ is a member of this hierarchy, that is, 
\be
n=n_q, \quad  \mbox{for some}\quad q>1,
\ee 
we modify the offline stage as follows.
\nl
\nl
{\bf Algorithm 2.} Start with $\wt w_{0}=1$ and $\wt\sigma_{0}=\mu$. For $p=1,\dots,q$, iterate the following: draw a certain  number $M_{p}$ of points  $z^1_p, \dots, z_p^{M_p}$ independently according to $\wt \sigma_{p-1}$, and construct an
orthonormal basis $(L^p_j)_{1\leq j \leq n_p}$ of $V_{n_p}$
with respect to the inner product
\be
\<u,v\>_p:=\frac{1}{M_p}\,\sum_{i=1}^{M_p}\, \wt w_{p-1}(z^i_p)\,u(z^i_p)\,v(z^i_p).
\label{Mpinner}
\ee
Then define
 \be
 \wt k_{n_p}=\sum_{j=1}^{n_p} |L^p_j|^2,\quad \wt w_{p}=\frac{n_p}{\wt k_{n_p}},\quad\text{and}\quad d\wt \sigma_{p}:=\alpha_{p}\,\frac{\wt k_{n_p}}{n_p}\,d\mu,
 \ee
where $\alpha_{p}$ is the normalization constant, and proceed to the next iteration. At the end of iteration $q$, define the perturbated Christoffel function for $V_n$ as $\wt k_n=\wt k_{n_q}$,
weight function $\wt w=\wt w_q$ and sampling measure 
\be
\wt \sigma=\wt \sigma_q=\alpha\,\frac{\wt k_{n}}{n}\,d\mu,
\ee 
where $\alpha$ is a normalization factor.
\nl

The online stage remains unchanged: the samples $x^1,\dots,x^m$ for evaluation of $u$
are drawn i.i.d. according to $\wt \sigma$, and
we solve the weighed least-squares problem \iref{wls2}.
The sample size $M$ of the offline stage
is now replaced by $\o M=M_1+\dots+M_q$. 
We denote by $G_p:=(\<L_j,L_k\>_p)_{j,k=1,\dots,n_p}$
the Gramian matrices for the inner products \iref{Mpinner},
The following result shows
that the conditions imposed on the $M_p$ are less stringent than 
those that were imposed on $M$.

\begin{theorem}
\label{thmalgo3}
Let $\eps_p >0$ such that $\eps:=\sum_{p=1}^q\eps_p<1$, and assume that the amounts
offline samples in Algorithm 2 satisfy
\be
M_p\geq 3\,  \kappa\,\gamma\, n_p\,\ln \frac{2n_p}{\eps_p},\quad p=1,\dots,q,
\ee
 with $\kappa$ the constant in the assumption \iref{hproperty}.  Then if $m\geq 3\,\gamma\,n\,\ln\frac{2n}{\eps}$, 
the same convergence bounds \iref{bound332} and \iref{bound32} as in Corollary \ref{corframe} hold, with 
$E:=\{\|G-I\|_2\leq \frac 1 2 \;{\rm and} \; \|G_q-I\|_2\leq \frac 1 2\}$ that satisfies $\Pr(E^c)\leq 2\eps$.
\end{theorem}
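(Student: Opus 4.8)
The plan is to run an induction over the levels $p=1,\dots,q$, transferring at each step the concentration of the offline Gramian $G_p$ into a two-sided framing of $\wt k_{n_p}$ by $k_{n_p}$, and then to feed the \emph{final} framing into the online analysis of \S 2.2 exactly as in Corollary~\ref{corframe}. Since the sampling measure $\wt\sigma_{p-1}$ and weight $\wt w_{p-1}$ entering level $p$ are themselves random outputs of the previous levels, every concentration statement at level $p$ must be read conditionally on the $\sigma$-algebra $\mathcal{F}_{p-1}$ generated by the draws $z^i_1,\dots,z^i_{p-1}$. I introduce the events $A_p:=\{\|G_p-I\|_2\leq \frac12\}$ and their nested intersections $F_p:=A_1\cap\dots\cap A_p$, and I use that, conditionally on $\mathcal{F}_{p-1}$, the level-$p$ inner product $\<\cdot,\cdot\>_p$ is an unbiased estimator of $\<\cdot,\cdot\>_{L^2(\mu)}$ (the compatibility $\wt w_{p-1}\,d\wt\sigma_{p-1}=d\mu$), so that $\E_{z_p}(G_p\mid\mathcal{F}_{p-1})=I$ and the matrix Chernoff bound of Lemma~\ref{tropp} applies conditionally.

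The core of the induction is the estimate $\|\wt w_{p-1}\,k_{n_p}\|_{L^\infty}\leq 3\kappa\,n_p$ on the event $A_{p-1}$. For $p\geq2$ I would reproduce the computation in the proof of Theorem~\ref{theoframe}: writing $\wt w_{p-1}=\frac{n_{p-1}}{\alpha_{p-1}\wt k_{n_{p-1}}}$ with $\alpha_{p-1}\int_D\wt k_{n_{p-1}}\,d\mu=n_{p-1}$ and using $\int_D k_{n_{p-1}}\,d\mu=n_{p-1}$,
\be
\|\wt w_{p-1}\,k_{n_p}\|_{L^\infty}
=\Big\|\tfrac{k_{n_p}}{\wt k_{n_{p-1}}}\Big\|_{L^\infty}\int_D \wt k_{n_{p-1}}\,d\mu
\leq \kappa\,\Big\|\tfrac{k_{n_{p-1}}}{\wt k_{n_{p-1}}}\Big\|_{L^\infty}\Big\|\tfrac{\wt k_{n_{p-1}}}{k_{n_{p-1}}}\Big\|_{L^\infty}\, n_{p-1},
\ee
where the inequality uses $k_{n_p}\leq\kappa\,k_{n_{p-1}}$ from \iref{hproperty}. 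On $A_{p-1}$, Lemma~\ref{firstdraw} converts $\frac12 I\leq G_{p-1}\leq\frac32 I$ into the framing $\frac23 k_{n_{p-1}}\leq \wt k_{n_{p-1}}\leq 2k_{n_{p-1}}$, so the product of the two sup-norms is at most $\frac32\cdot2=3$ and $\|\wt w_{p-1}k_{n_p}\|_{L^\infty}\leq 3\kappa n_{p-1}\leq 3\kappa n_p$. The base case $p=1$ is immediate: $\wt w_0=1$ gives $\|\wt w_0 k_{n_1}\|_{L^\infty}=\|k_{n_1}\|_{L^\infty}\leq 3\kappa n_1$ directly from \iref{hproperty}. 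Hence the hypothesis $M_p\geq 3\kappa\gamma\,n_p\ln(2n_p/\eps_p)$ is exactly the requirement $M_p\geq\gamma\|\wt w_{p-1}k_{n_p}\|_{L^\infty}\ln(2n_p/\eps_p)$ of Lemma~\ref{tropp}, so $\Pr_{z_p}(A_p^c\mid\mathcal{F}_{p-1})\leq\eps_p$ on $F_{p-1}\subseteq A_{p-1}$. A telescoping union bound $\Pr(F_p^c)\leq\Pr(F_{p-1}^c)+\E(\Chi_{F_{p-1}}\Pr_{z_p}(A_p^c\mid\mathcal{F}_{p-1}))$ then yields $\Pr(F_q^c)\leq\sum_{p=1}^q\eps_p=\eps$.

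It remains to combine the offline and online layers. On $F_q\subseteq A_q$, Lemma~\ref{firstdraw} gives the final framing $\frac23 k_n\leq\wt k_n\leq 2k_n$, i.e.\ \iref{compk} with $c=3$; conditionally on this $z$-event, Theorem~\ref{theoframe} applies to the online draw with $m\geq 3\gamma n\ln(2n/\eps)$, providing $\Pr_x(\|G-I\|_2>\frac12\mid z)\leq\eps$ and the conditional bound \iref{bound33}. For the probability estimate I would split $\Pr(E^c)\leq\Pr(F_q^c)+\Pr(F_q\cap E^c)$; since $F_q\subseteq A_q$ annihilates the $A_q^c$ part, $F_q\cap E^c=F_q\cap\{\|G-I\|_2>\frac12\}$, and $\E(\Chi_{F_q}\Pr_x(\|G-I\|_2>\frac12\mid z))\leq\eps$, so $\Pr(E^c)\leq 2\eps$. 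For \iref{bound332} I condition on $A_q$ and invoke Lemma~\ref{lemcomputation} with $\|\wt w\,k_n\|_{L^\infty}\leq 3n$, giving $\E(\|u-\wt u_n\|^2\Chi_E)\leq(1+\tfrac{12n}{m})e_n(u)^2=(1+\eta(m))e_n(u)^2$; and \iref{bound32} follows by splitting on $E$ and $E^c$, using $\|u-\wt u_n^T\|\leq\|u-\wt u_n\|$ on $E$ and $\|u-\wt u_n^T\|^2\leq 4\tau^2$ on $E^c$ together with $\Pr(E^c)\leq 2\eps$, exactly as in the proof of Theorem~\ref{cohenmigliorati}.

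The main obstacle I anticipate is the correct handling of the multilevel stochastic dependency: the weight and measure entering level $p$ are random functions of the earlier draws, so both the matrix Chernoff bound and the framing of Lemma~\ref{firstdraw} must be applied conditionally on $\mathcal{F}_{p-1}$, and one must verify that the framing constants $\tfrac23$ and $2$ do \emph{not} degrade as $p$ grows. The assumption \iref{hproperty} is precisely what prevents this degradation: the factor $\kappa$ controlling the ratio $k_{n_p}/k_{n_{p-1}}$ keeps $\|\wt w_{p-1}k_{n_p}\|_{L^\infty}$ at the optimal order $\cO(n_p)$ at every level, so that the per-level budgets $M_p$ remain proportional to $n_p$ rather than to $K_{n_p}$, which is the entire purpose of the multilevel scheme.
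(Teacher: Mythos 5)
Your proposal is correct and follows essentially the same route as the paper's proof: the same induction on the nested events $\{\|G_p-I\|_2\le\frac12\}$, the same key estimate $\|\wt w_{p-1}k_{n_p}\|_{L^\infty}\le 3\kappa n_p$ obtained from Lemma \ref{firstdraw} and assumption \iref{hproperty}, a level-by-level application of Lemma \ref{tropp}, and the final reduction to Theorem \ref{theoframe} and the argument of Corollary \ref{corframe}. Your explicit conditioning on $\mathcal{F}_{p-1}$ and careful handling of the normalization $\alpha_{p-1}$ in the weight are slightly more pedantic than the paper's presentation, but they formalize exactly what the paper does implicitly.
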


\begin{proof} We show by induction on $p$ that the event
\be
B_p:=\left\{\|G_1-I\|_2\leq\frac{1}{2},\dots,\|G_p-I\|_2\leq\frac{1}{2}\right\}
\ee
occurs with probability at least $1-\eps_1-\dots-\eps_p$. As 
\be
M_1\geq 3\, \kappa\,\gamma\,n_1\,\ln \frac{2\,n_1}{\eps_1}\geq \gamma \,\|\wt w_0\,k_{n_1}\|_{L^\infty}\,\ln \frac{2\,n_1}{\eps_1},
\ee
 by Lemma~\ref{tropp},
\be
\Pr(B_1)\geq 1-\eps_1.
\ee
For $1\leq p< q$, under the event $B_{p}$, Lemma~\ref{firstdraw} gives
\be
\frac{2}{3}\,k_{n_p}(x)\leq \wt k_{n_p}(x)\leq 2\,k_{n_p}(x),\quad x\in D.
\ee
Therefore, using assumption \iref{hproperty}, we find that
\be
\left\|\frac{\wt w_{p}\, k_{n_{p+1}}}{\alpha_p}\right\|_{L^\infty}=\left\|\frac {n_p} {\alpha_p}\,\frac{k_{n_{p+1}}}{\wt k_{n_p}}\right\|_{L^\infty}\leq n_p\,\bigg\|\frac{\wt k_{n_p}}{k_{n_p}}\bigg\|_{L^\infty}\,\left\|\frac{k_{n_p}}{\wt k_{n_p}}\right\|_{L^\infty}\,\left\|\frac{k_{n_{p+1}}}{k_{n_p}}\right\|_{L^\infty}\leq 3\,\kappa\,n_p.
\ee
As $M_{p+1} \geq 3\,\kappa\,\gamma \,n_p \,\ln \frac{2\,n_p}{\eps_{p+1}}$, Lemma \ref{tropp} applies, and combining this with the induction hypothesis: 
\begin{align*}
\Pr(B_{p+1})&=\Pr(B_p)\, \Pr\(\|G_{p+1}-I\|_2\leq\frac{1}{2}\,\Big|\,B_p\)\\
&\geq (1-\eps_1-\dots -\eps_p)(1-\eps_{p+1})\geq1-\eps_1-\dots -\eps_p-\eps_{p+1}.
\end{align*}
Use Lemma \ref{firstdraw} one last time to write, in the event $B_{q}$,
\be
\frac{2}{3}\,k_{n_q}(x)\leq \wt k_{n_q}(x)\leq 2\,k_{n_q}(x),\quad x\in D,
\ee
which is the framing \iref{matframe} for the particular values
$c_1=\frac 2 3$ and $c_2=2$. Since $B_q$ has probability larger than $1-\eps$,
we conclude by the exact same arguments used in the proof Corollary \ref{corframe}.
\end{proof}

We now comment on the gain of complexity by using Algorithm 2:
\begin{enumerate}
\item
Exponential growth of $K_n$: the property \iref{hproperty} might be satisfied even when $K_n$ grows exponentially
with $n$, by taking the choice $n_p=p$. Then, the complexity of Algorithm 1 
is of order $\cO(M\,n^2)\gsim \cO(K_n\,n^2\ln(2n/\eps))$, 
which grows exponentially in $n$. In contrast, 
the total amount of sampling in Algorithm 2 is $\o M=M_1+\dots+M_n\leq n\,M_n=\cO(n^2\ln(2n/\eps))$,
 so the first stage remains of polynomial complexity $\cO(n^4\ln(2n/\eps))$.
\item
Algebraic growth of $K_n$:  if $K_n\sim n^r$ only grows algebraically in $n$, one may choose $n_p=2^p$, in which case the total number of sample points $\o M$ rewrites as $M_{n_0}+\dots+M_{n_q}\sim M_{n_q}$, giving an optimal complexity $\cO(n^3\ln(2n/\eps))$ for the first stage. This is smaller than the complexity $\cO(K_n \,n^2\ln(2n/\eps))=\cO(n^{2+r}\ln(2n/\eps))$ encountered in Algorithm 1.
\end{enumerate}

While Algorithm 2 produces a computational gain in computing a near-optimal measure $\wt \sigma$, 
the resulting sample $x^1,\dots,x^m$ is specifically targeted at approximating $u$
in the space $V_n=V_{n_q}$. As explained in \S \ref{optbench}, it is sometimes desirable to obtain
optimal weighted least-squares approximations $\wt u_{n_p}$ for each space $V_{n_p}$
while maintaining the cumulated number of evaluations of $u$ until step $p$ of the optimal order $n_p$
up to logarithmic factors. Therefore, 
we would like to recycle the evaluation points $\{x^1,\dots,x^{m_{p-1}}\}$ used until step $p-1$
in order to create the new evaluation sample $\{x^1,\dots,x^{m_{p}}\}$, for some well chosen sequence $(m_p)_{p\geq 1}$
that grows similar to $(n_p)_{p\geq 1}$ up to logarithmic factors.

 Intuitively, since the sample should have a density proportional to $k_{n_p}$,
most of the new points we draw at step $p$ should be distributed according a density 
proportional to $k_{n_p}-k_{n_{p-1}}=\sum_{j=n_{p-1}+1}^{n_p}|L_j|^2$. This leads us
to the following algorithm.
\nl
\nl
{\bf Algorithm 3.} Start with $\wt w_{0}=1$ and $\wt\sigma_{0}=\mu$ and $m_0=0$. For $p=1,2,\dots$, generate
$z^i_{n_p}$ and compute $\wt w_{n_p}$, $\wt \sigma_{n_p}$ and $\wt k_{n_p}$ as in Algorithm 2. 
When creating the orthonormal basis $(L_j^{n_p})$, ensure compatibility with the inclusion 
$V_{n_{p-1}}\subset V_{n_p}$, in the sense that 
\be
\Span(L_1^{n_p},\dots,L_{n_{p-1}}^{n_p})=V_{n_{p-1}}.
\ee
Having defined the evaluation points $\{x^1,\dots,x^{m_{p-1}}\}$, 
draw the new evaluation points $x^i$ for $i=m_{p-1}+1,\dots,m_p$ according to
\be
\label{rhop}
d\rho_p:=\frac{\alpha_p}{m_p-m_{p-1}}\(\frac{m_p}{n_p}\,\sum_{j=1}^{n_p}|L_j^{n_p}|^2-\frac{m_{p-1}}{n_{p-1}}\,\sum_{j=1}^{n_{p-1}}|L_j^{n_p}|^2\)\,d\mu,
\ee
with $\alpha_p$ a normalization factor.
\nl

\begin{remark}
 Note that the non-negativity of $\rho_p$ is only guaranteed when $(m_p/n_p)_{p\geq1}$ is non-decreasing, a condition which is easily met since $m_p$ has to grow as $n_p\,\ln n_p$. If we had taken $m_p$ exactly linear with respect to the dimension $n_p$, the terms with $j\leq n_{p-1}$ in the expression \iref{rhop} would cancel, hence $d\rho_p$ would only be an approximation of $\frac{k_{n_p}-k_{n_{p-1}}}{n_p-n_{p-1}}\,d\mu$.
\end{remark}

\begin{remark}
In the above algorithm, the various sections $\{x^{m_{k-1}+1},\dots,x^{m_k}\}$
of $\{x^1,\dots,x^{m_p}\}$ for $k=1,\dots,p$ are drawned
according to different probability measures. The sample $\{x^1,\dots,x^{m_p}\}$ 
is thus not i.i.d. anymore, which affects the proof of the convergence theorem
given below. Instead it may be thought as a deterministic mixture
of a collection of i.i.d. samples, as introduced and studied in \cite{Mig2}. 
\end{remark}

At any iteration $q$, we use the evaluations of $u$ at all points $x^1,\dots,x^{m}$
as follows to compute a least-squares approximation $\wt u_{n}\in V_{n}$, where $n:=n_q$ and $m:=m_q$. We denote
by $w$ the weight function defined by
\be
w(x)\,\sum_{p=1}^q(m_p-m_{p-1})\,d\rho_p=m\,d\mu,
\label{defw}
\ee
and solve the weighted least square problem \iref{wls}.
The following result shows that
instance optimality is maintained at every step $q$, with a cumulated sampling budget $m_q$ that is
near-optimal.

\begin{theorem}
\label{thmalgo4}
Take numbers $\delta_p,\eps_p\in ]0,1[$ such that $\eps:=\sum_{p=1}^q\eps_p<1$ and $\delta:=\sum_{p=1}^q\delta_p< 1/2$, and define $c_{\delta}=((1+\delta)\ln\, (1+\delta)-\delta)^{-1}$. Assume that, for all $p\geq 1$,
\be
M_{n_p}\geq 2\, \kappa\,c_{\delta_p}\, n_p \ln\frac{2 n_p}{\eps_p}\quad \text{and} \quad m_p\geq \frac{\gamma}{1-2\delta}\,n_p\ln \frac{2n_p}{\eps},
\ee
with $\kappa$ the constant in the assumption \iref{hproperty}, and that $m_p/n_p$ is an non-decreasing function of $p$. Then, with $n:=n_q$ and $m:=m_q$, the convergence bounds \iref{bound332} and \iref{bound32} 
simultaneously hold for all $q\geq 1$, with
\be
\eta(m)=\frac{4}{(1-2\delta)}\frac{n}{m}\leq\frac {4}{\gamma\ln(2n/\eps)}
\ee
and $E:=\{\|G-I\|_2\leq \frac 1 2\; {\rm and} \;\|G_p-I\|_2\leq \delta_p, \, p\geq 1\}$, which satisfies $\Pr(E^c)\leq 2\eps$.
\end{theorem}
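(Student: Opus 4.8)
The plan is to mirror the two–stage analysis of Theorem~\ref{thmalgo3} and Corollary~\ref{corframe}, but with two new ingredients: a general--$\delta$ matrix Chernoff bound in the offline induction, and a non--i.i.d.\ (mixture) concentration argument in the online stage. Throughout I fix the target level $q$, write $n=n_q$ and $m=m_q$, and condition on the offline framing event $B_q:=\{\|G_p-I\|_2\le\delta_p,\ p=1,\dots,q\}$; since the construction is hierarchical and nested, the resulting statement is valid for every stopping level $q$.

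First I would treat the offline stage exactly as in the proof of Theorem~\ref{thmalgo3}, by induction on $p$. The only change is that the target accuracy is now $\|G_p-I\|_2\le\delta_p$ rather than $\le\tfrac12$, so I invoke the matrix Chernoff inequality (Theorem~1.1 of \cite{Tr}) at tolerance $\delta_p$, whose exponent is governed by $c_{\delta_p}^{-1}=(1+\delta_p)\ln(1+\delta_p)-\delta_p$ in place of $\gamma^{-1}$. Under $B_{p-1}$, Lemma~\ref{firstdraw} frames $\wt k_{n_{p-1}}$ by $k_{n_{p-1}}$ within factors $(1\pm\delta_{p-1})^{\mp1}$, and combined with the proximity assumption \iref{hproperty} this yields $\|\wt w_{p-1}\,k_{n_p}\|_{L^\infty}\le 2\kappa\, n_p$; the hypothesis $M_{n_p}\ge 2\kappa\,c_{\delta_p}\,n_p\ln(2n_p/\eps_p)$ then gives $\Pr(\|G_p-I\|_2>\delta_p\mid B_{p-1})\le\eps_p$. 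A union bound over $p=1,\dots,q$ produces $\Pr(B_q^c)\le\sum_p\eps_p\le\eps$, and on $B_q$ every $\wt k_{n_p}$, as well as every partial Christoffel sum $\sum_{j\le n_{p-1}}|L_j^{n_p}|^2$ appearing in \iref{rhop}, is framed by the corresponding exact $k$ within factors $(1\pm\delta_p)^{\mp1}$.

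The crux is the online stage. The key observation is that, were the framing exact (so $\alpha_p=1$ and $m_0=0$), the averaged sampling density telescopes:
\be
\frac{1}{m}\sum_{p=1}^q (m_p-m_{p-1})\,\frac{d\rho_p}{d\mu}=\frac1m\sum_{p=1}^q\(\frac{m_p}{n_p}k_{n_p}-\frac{m_{p-1}}{n_{p-1}}k_{n_{p-1}}\)=\frac{m_q}{m\,n_q}\,k_{n_q}=\frac{k_n}{n},
\ee
so the mixture measure $\bar\sigma:=\frac1m\sum_p(m_p-m_{p-1})\rho_p$ reconstructs exactly the optimal measure $\sigma^*$, and the weight $w$ of \iref{defw} equals $w^*=n/k_n$, whence $\|w\,k_n\|_{L^\infty}=n$. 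With the framing errors present, the same telescoping bookkeeping, tracking the $(1\pm\delta_p)^{\mp1}$ factors and the normalizations $\alpha_p$, degrades this on $B_q$ to $\|w\,k_n\|_{L^\infty}\le n/(1-2\delta)$, the $2\delta$ collecting one upper and one lower framing error per level. Since the points $x^1,\dots,x^m$ are drawn independently (within each batch i.i.d.\ according to $\rho_p$, and the batches are mutually independent), the Gramian $G=\frac1m\sum_i w(x^i)L(x^i)L(x^i)^\top$ is a sum of independent rank-one matrices with $\E(G)=\int LL^\top d\bar\sigma=I$ and each summand of spectral norm $\le\frac1m\|w\,k_n\|_{L^\infty}$; the independent-summand matrix Chernoff bound with the budget $m\ge\frac{\gamma}{1-2\delta}\,n\ln(2n/\eps)$ then gives $\Pr_x(\|G-I\|_2>\tfrac12\mid B_q)\le\eps$, so $E=B_q\cap\{\|G-I\|_2\le\tfrac12\}$ satisfies $\Pr(E^c)\le2\eps$ by the same union argument as in Corollary~\ref{corframe}.

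Finally I would recover \iref{bound332} and \iref{bound32} by repeating the variance computation of Lemma~\ref{lemcomputation} for the mixture sample, where the i.i.d.\ second moment is replaced by $\frac1m\sum_p(m_p-m_{p-1})\E_{\rho_p}[\cdot]=\int(\cdot)\,d\bar\sigma$, producing the factor $\frac{4}{m}\|w\,k_n\|_{L^\infty}\le\frac{4}{1-2\delta}\frac{n}{m}=\eta(m)$; the truncation argument of Theorem~\ref{cohenmigliorati} then yields \iref{bound32} with the additive term $4\eps\tau^2$ doubled to $8\eps\tau^2$ through $\Pr(E^c)\le2\eps$. I expect the main obstacles to be precisely (i) carrying the per-level framing factors through the telescoping sum to obtain the clean constant $1/(1-2\delta)$ in $\|w\,k_n\|_{L^\infty}$, and (ii) justifying the Lemma~\ref{lemcomputation} estimate in the non-i.i.d.\ setting, for which the deterministic-mixture framework of \cite{Mig2} provides the right abstraction.
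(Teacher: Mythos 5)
Your proposal is correct and follows essentially the same route as the paper's proof: the offline induction of Theorem~\ref{thmalgo3} run at tolerance $\delta_p$ with the $c_{\delta_p}$ Chernoff constant, the telescoping computation of $\sum_p(m_p-m_{p-1})\,d\rho_p/d\mu$ showing $\|w\,k_n\|_{L^\infty}\leq \frac{n}{1-2\delta}$ on the offline event, the non-identically-distributed matrix Chernoff bound (Proposition~\ref{proptr}) for $G$, and a rerun of the Lemma~\ref{lemcomputation} variance computation with the mixture measure $\frac1m\sum_p(m_p-m_{p-1})\rho_p=\frac1w\,d\mu$ in place of the i.i.d.\ law. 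The two obstacles you flag at the end are exactly the two points the paper works out in detail, and your sketch of how to resolve them matches the paper's argument.
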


The proof of this theorem requires a refinement of Lemma \ref{tropp}, due to the fact that 
the $x^i$ are not anymore identically distributed. This uses the following tail bound, directly obtained from the matrix Chernoff bound in \cite{Tr}.

\begin{prop}
\label{proptr}
 Consider a finite sequence $\{X^i\}_{i=1,\dots,m}$
 of independent, random, self-adjoint matrices with dimension $n$. Assume that each matrix satisfies $0\leq X^i\leq R\,I$ almost surely, and that $\sum_{i=1}^m \E (X^i)=I$. Then for all $\delta\in ]0,1[$,
 \be
 \Pr\(\,\left\|\,\sum_{i=1}^m X^i-I\,\right\|_2> \delta\)\leq 2\,n\exp\(-\frac1{c_\delta\,R}\),
 \ee
where $c_{\delta}=((1+\delta)\ln\, (1+\delta)-\delta)^{-1}$ as in Theorem \ref{thmalgo4}.
\end{prop}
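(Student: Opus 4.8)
The plan is to apply Tropp's matrix Chernoff inequality (Theorem 1.1 in \cite{Tr}) directly and then fold the two one-sided tails into a single two-sided estimate. First I would put the hypotheses in the form Tropp requires: each $X^i$ is self-adjoint, positive semidefinite, and satisfies $\lambda_{\max}(X^i)\leq R$ almost surely, which is exactly the assumption $0\leq X^i\leq R\,I$, while the normalization $\sum_{i=1}^m \E(X^i)=I$ forces both $\lambda_{\min}(\sum_i \E X^i)=1$ and $\lambda_{\max}(\sum_i \E X^i)=1$. Writing $S=\sum_{i=1}^m X^i$, the matrix $S-I$ is self-adjoint with eigenvalues $\lambda_j(S)-1$, so $\|S-I\|_2=\max\{\lambda_{\max}(S)-1,\,1-\lambda_{\min}(S)\}$. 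Hence the event $\{\|S-I\|_2>\delta\}$ is contained in the union $\{\lambda_{\max}(S)>1+\delta\}\cup\{\lambda_{\min}(S)<1-\delta\}$, and a union bound reduces the problem to estimating each one-sided deviation.

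For the upper tail, Tropp's bound gives $\Pr(\lambda_{\max}(S)\geq 1+\delta)\leq n\,[e^\delta(1+\delta)^{-(1+\delta)}]^{1/R}$, and the elementary identity $e^\delta(1+\delta)^{-(1+\delta)}=\exp(\delta-(1+\delta)\ln(1+\delta))=\exp(-1/c_\delta)$ turns this into exactly $n\exp(-1/(c_\delta R))$. For the lower tail, Tropp's companion bound gives $\Pr(\lambda_{\min}(S)\leq 1-\delta)\leq n\,[e^{-\delta}(1-\delta)^{-(1-\delta)}]^{1/R}$. The only genuine computation is then to show that this lower-tail multiplier is dominated by the upper-tail one, so that the lower-tail estimate is also bounded by $n\exp(-1/(c_\delta R))$.

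To carry this out I would set $g_+(\delta)=\delta-(1+\delta)\ln(1+\delta)$ and $g_-(\delta)=-\delta-(1-\delta)\ln(1-\delta)$ and prove $g_-(\delta)\leq g_+(\delta)$ on $]0,1[$. Both functions vanish at $\delta=0$, and differentiating gives $g_+'(\delta)=-\ln(1+\delta)$ and $g_-'(\delta)=\ln(1-\delta)$, whence $(g_+-g_-)'(\delta)=-\ln(1-\delta^2)>0$; integrating from $0$ yields $g_+(\delta)\geq g_-(\delta)$, so $e^{-\delta}(1-\delta)^{-(1-\delta)}=\exp(g_-(\delta))\leq\exp(g_+(\delta))=\exp(-1/c_\delta)$ and the lower tail is also at most $n\exp(-1/(c_\delta R))$. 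Adding the two contributions from the union bound produces the factor $2n$ and completes the proof. I expect no serious obstacle: the statement is essentially a repackaging of Tropp's theorem, and the only nontrivial point is the monotonicity comparison $g_-\leq g_+$, a one-line derivative sign check; the mild subtlety is merely to keep the normalization straight so that $\mu_{\min}=\mu_{\max}=1$ and the exponent reads off as $-1/(c_\delta R)$ rather than a rescaled version.
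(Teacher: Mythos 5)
Your proof is correct and follows exactly the route the paper intends: Proposition \ref{proptr} is stated as being ``directly obtained from the matrix Chernoff bound in \cite{Tr}'', and you have simply made that derivation explicit, with the union bound over the two one-sided tails and the comparison $-\delta-(1-\delta)\ln(1-\delta)\leq \delta-(1+\delta)\ln(1+\delta)$ (via the positive derivative $-\ln(1-\delta^2)$) correctly justifying that both tails are dominated by $n\exp(-1/(c_\delta R))$. All the details check out, including the normalization $\mu_{\min}=\mu_{\max}=1$ and the identity $e^{\delta}(1+\delta)^{-(1+\delta)}=\exp(-1/c_\delta)$.
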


\noindent
{\bf Proof of Theorem \ref{thmalgo4}:} By the same argument as in Theorem \ref{thmalgo3},
we find that the event 
\be
B=\{\|G_p-I\|_2\leq \delta_p, \quad p\geq 1\}
\ee
has probability larger than $1-\eps$, where the $G_p$ are as in the proof of Theorem \ref{thmalgo3}.

 We then fix a value of $q$ and for $n=n_q$ and $m=m_q$, we study the Gramian matrix
$G$ which is the sum of the independent, but not identically distributed, matrices
\be
X^i:=\frac{1}{m}\,w(x^i) \,(L_j(x^i)L_k(x^i))_{j,k=1,\dots,n}, \quad i=1,\dots,m.
\ee
Then, with the notation $H(x)=(L_j(x)L_k(x))_{j,k=1,\dots,n}$,
\be
\sum_{i=1}^m\, \E (X^i)= \sum_{p=1}^q\, (m_p-m_{p-1})\,\int_D\frac{1}{m}\,w(x)\,H(x)\,d\rho_p(x)=\int_D H(x)\,d\mu(x)=I, 
\ee
and 
\be
\|X^i\|_2=\frac{1}{m}\,w(x^i)\,\sum_{j=1}^n |L_j(x^i)|^2 \leq \frac{1}{m}\,\|w\,k_n\|_{L^{\i}}=:R.
\ee
One also has, under the event $B$, $\int_D |L_j^{n_p}|^2\,d\mu\leq\frac{1}{1-\delta_p}$ for $j=1,\dots,n_p$ so $\alpha_p \geq 1-\delta_p$, and consequently
\begin{align*}
 \frac{m}{w}&=\sum_{p=1}^q\, (m_p-m_{p-1})\,\frac{d\rho_p}{d\mu}\\
 &=\sum_{p=1}^q\alpha_p\(\frac{m_p}{n_p}\,\sum_{j=1}^{n_p}\,|L_j^{n_p}|^2-\frac{m_{p-1}}{n_{p-1}}\,\sum_{j=1}^{n_{p-1}}\,|L_j^{n_p}|^2\)\\
 &\geq \sum_{p=1}^q \(\frac{m_p}{n_p}\,\frac{1-\delta_p}{1+\delta_p}\,k_{n_p}-\frac{m_{p-1}}{n_{p-1}}\,k_{n_{p-1}}\)\\
 &\geq \frac{m}{n}\,k_{n} -\sum_{p=1}^{q} \frac{m_p}{n_p}\,\frac{2\delta_p}{1+\delta_p}\,k_{n_p}\\
 & \geq (1-2\delta)\,\frac{m}{n}\,k_{n},
\end{align*}
so $R=\frac{1}{m}\,\|w\,k_{n}\|_{L^\infty}\leq \frac{1}{(1-2\delta)}\,\frac{n}{m}$. Applying Proposition \ref{proptr}, we find that
\be
{\Pr}_x\(\|G-I\|_2>\frac{1}{2} \; \Big | \; B \)\leq 2\,n\,\exp\(-\frac1{\gamma\,R}\)\leq 2\,n\,\exp\(-\frac{1-2\delta}{\gamma}\,\frac{m}{n}\)\leq\eps.
\ee
Therefore, since $E:=B\cap \left\{ \|G-I\|_2\leq\frac{1}{2}\right\}$, we find that $\Pr(E)\geq 1-2\eps$. 
 
 In order to prove the convergence bounds \iref{bound332} and \iref{bound32},
 we cannot proceed as in Corollary \ref{corframe} by simply invoking Theorem \ref{theoframe}, because
 the $x^i$ are not identically distributed. This leads us to modify the statement of 
 Lemma \ref{lemcomputation} and its proof given in \cite{CM1}. 
 First, using similar arguments as in  \cite{CM1}, we find that
 \be
 \E(\|u-\wt u_n\|^2 \,\Chi_E) \leq e_n(u)^2+4\,\E \(\sum_{k=1}^n|\<L_k,g\>_m|^2\,\Chi_{E}\),
 \label{beginningoftheproofofCM}
 \ee
 where $g=u-P_nu$ is the projection error.  For each $k=1,\dots,n$,
we define $g_k:=w\,L_k\,g$ and write
  \begin{align*}
\E\,(|\<L_k,g\>_m|^2\,\Chi_{E}) &\leq \E\,(|\<L_k,g\>_m|^2\,\Chi_{B})\\
&=\frac{1}{m^2}\sum_{1 \leq i,j \leq m} \E\,\(g_k(x^i)\,g_k(x^j)\,\Chi_{B}\)\\
&=\frac{1}{m^2}\,\E_z\(\Chi_{B}\(\sum_{1\leq i \leq m}\E_x \(|g_k(x^i)|^2\)+\sum_{i\neq j}\,\E_x\(g_k(x^i)\,g_k(x^j)\)\)\)\\
&\leq\frac{1}{m^2}\,\E_z\( \Chi_{B}\(\sum_{1\leq i \leq m}\E_x \(|g_k(x^i)|^2\)+\(\sum_{1\leq i \leq m}\,\E_x\(g_k(x^i)\)\)^2\,\)\)\\
&=\E_z\(\Chi_{B}\(\frac{1}{m}\,\E_t\(|g_k(t)|^2\)+\big(\,\E_t\,(g_k(t))\big)^2\)\),
\end{align*}
where $t$ is a random variable distributed according to $\sum_{p=1}^q \frac{m_p-m_{p-1}} m\,d\rho_p=\frac{1}{w}\,d\mu$. 
We then note that
\be
\E_t(g_k(t))=\int_D g\,L_k\, d\mu=0
\ee
since $g\in V_n^\perp$, and that $\sum_{k=1}^n|g_k(t)|^2=w(t)^2 \,g(t)^2\,k_n(t)$.
Therefore 
\begin{align*}
\E \(\sum_{k=1}^n|\<L_k,g\>_m|^2\,\Chi_{E}\) & \leq \E_z\(\Chi_{B} \, \frac 1 m \int_D w\,k_n \,g^2 d\mu\)\\
& \leq  \E_z\(\Chi_{B} \,R \,\|g\|^2\) \\
& \leq \frac{1}{(1-2\delta)}\,\frac{n}{m}\,e_n(u)^2
\end{align*}
Combining this with \iref{beginningoftheproofofCM}, we finally obtain
\be
\E(\|u-\wt u_n\|^2 \,\Chi_E) \leq \(1+\frac{4}{(1-2\delta)}\,\frac{n}{m}\)e_n(u)^2
\ee
\hfill $\Box$ 

\begin{remark}
If a stopping time $q$ is known in advance, the simplest choice is to take  $\eps_p=\eps/q$ and $\delta_p=\delta/q$.
If the stopping time $q$ is not known in advance, we can take for instance $\eps_p=\frac{6}{\pi^2}\,\frac{\eps}{p^2}$ and $\delta_p=\frac{6}{\pi^2}\,\frac{\delta}{p^2}$. As $c_\delta\sim \frac{2}{\delta^2}$ when $\delta\rightarrow0$, this choice only increases the number $M_p$ of sample points $z^i$ by a factor $p^4$, which is satisfying in view the previous remarks.
\end{remark}

\section{Estimates on the inverse Christoffel function}

We have seen that the success of Algorithm 1 is based
on the offline sampling condition \iref{condM}, which
means that a uniform upper bound  $B(n)$ on 
the inverse Christoffel function $k_n$ 
is needed in the first place. Likewise, the multilevel 
Algorithms 2 and 3 from \S 4 are based on the assumption \iref{hproperty},
which verification requires pointwise upper and lower estimates on $k_n(x)$.
In this section we establish such bounds and pointwise
estimates on general domains when the $V_n$ are spaces of algebraic multivariate
polynomials of varying total degree. Throughout this section, we assume that 
\be
\mu=\mu_D=|D|^{-1}\,\Chi_D\,dx
\ee 
is the uniform measure over $D$, which is
thus assumed to have finite Lebesgue measure $|D|$. 

\subsection{Comparison strategies}

Our vehicle for estimating the Christoffel function is a general strategy, 
first introduced in \cite{Kro}: compare $D$ with reference domains 
$R$ for which the Christoffel function can be estimated. For simplicity,
we use the notation 
\be
L^2(R)=L^2(R,\mu_R),
\ee 
for any domain $R$ where $\mu_R=|R|^{-1}\,\Chi_R\,dx$ is the uniform measure over $R$. In order to make clear the dependence on the domain, we define
\be
k_{n,R}(x)=\max_{v\in V_n}\frac{|v(x)|^2}{\;\|v\|^2_{L^2(R)}\!},
\ee
and
\be
K_{n,R}=\|k_n\|_{L^\infty(R)}=\max_{v\in V_n}\frac{\,\|v\|_{L^\infty(R)}^2}{\|v\|^2_{L^2(R)}},
\ee
We first state a pointwise comparison result.

\begin{lemma}
\label{lemmaref2}
For $x\in D$, let $R$ be such that $x\in R\subset D$ and $\beta\, |D|\leq |R|$ for some $\beta \in ]0,1]$. Then
\be
k_{n,D}(x)\leq  \beta^{-1}\,k_{n,R}(x).
\ee
Conversely, let $S$ be such that $D\subset S$ and $\beta \,|S|\leq |D|$ for some $\beta \in ]0,1]$. Then
\be
 k_{n,D}(x) \geq \beta\, k_{n,S}(x).
\ee
\end{lemma}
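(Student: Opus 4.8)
The plan is to exploit the extremal characterization of the inverse Christoffel function, namely $k_{n,R}(x)=\max_{v\in V_n}\frac{|v(x)|^2}{\|v\|_{L^2(R)}^2}$, and simply track how the denominator $\|v\|_{L^2(R)}^2$ changes when we pass between the domains $D$, $R$, and $S$. The numerator $|v(x)|^2$ is the same in every comparison since it depends only on the point $x$ and the function $v$, not on the domain of integration. So the entire argument reduces to comparing the $L^2$ norms over nested domains, and the constant $\beta$ controlling the relative Lebesgue measures is exactly what converts one normalized norm into the other.

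For the first inequality, fix $x\in D$ and let $R$ satisfy $x\in R\subset D$ with $\beta|D|\leq|R|$. I would write out the unnormalized integrals: since $R\subset D$, we have $\int_R|v|^2\,dy\leq\int_D|v|^2\,dy$ for any $v\in V_n$. Converting to the normalized $L^2$ norms with the uniform measures $\mu_D=|D|^{-1}\Chi_D\,dy$ and $\mu_R=|R|^{-1}\Chi_R\,dy$, this reads $|R|\,\|v\|_{L^2(R)}^2\leq|D|\,\|v\|_{L^2(D)}^2$, hence
\be
\|v\|_{L^2(D)}^2\geq \frac{|R|}{|D|}\,\|v\|_{L^2(R)}^2\geq \beta\,\|v\|_{L^2(R)}^2.
\ee
Dividing $|v(x)|^2$ by these norms and taking the maximum over $v\in V_n$ then gives $k_{n,D}(x)=\max_v\frac{|v(x)|^2}{\|v\|_{L^2(D)}^2}\leq \beta^{-1}\max_v\frac{|v(x)|^2}{\|v\|_{L^2(R)}^2}=\beta^{-1}k_{n,R}(x)$, which is the claimed bound.

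For the converse, fix $x\in D$ and let $S\supset D$ with $\beta|S|\leq|D|$. Now $D\subset S$ gives $\int_D|v|^2\,dy\leq\int_S|v|^2\,dy$, so $|D|\,\|v\|_{L^2(D)}^2\leq|S|\,\|v\|_{L^2(S)}^2$, i.e. $\|v\|_{L^2(D)}^2\leq\frac{|S|}{|D|}\|v\|_{L^2(S)}^2\leq\beta^{-1}\|v\|_{L^2(S)}^2$. Passing to the quotient and maximizing over $v\in V_n$ yields $k_{n,D}(x)\geq\beta\,k_{n,S}(x)$. The one technical point to respect is that the maximizer $v$ achieving $k_{n,R}(x)$ may differ from the one achieving $k_{n,D}(x)$; this causes no difficulty because the norm comparison holds uniformly for every $v\in V_n$, so it survives taking the supremum on both sides (one should apply the inequality to the optimizer of whichever side gives the correct direction). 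I do not anticipate any genuine obstacle here: the only thing to be careful about is keeping the direction of each norm inequality aligned with the direction of the desired Christoffel bound, and making sure the point $x$ lies in the domain being used as the reference in the numerator — which is guaranteed by the hypothesis $x\in R$ in the first case and $x\in D\subset S$ in the second.
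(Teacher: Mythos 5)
Your proof is correct and follows essentially the same route as the paper: both arguments rest on the extremal characterization $k_{n,R}(x)=\max_{v\in V_n}|v(x)|^2/\|v\|_{L^2(R)}^2$ and the comparison $|R|\,\|v\|_{L^2(R)}^2\leq |D|\,\|v\|_{L^2(D)}^2$ for $R\subset D$ (and its analogue for $D\subset S$), followed by optimizing over $v$. The only difference is cosmetic — you compare the norms first and then divide, while the paper chains the inequalities starting from $|v(x)|^2$ — and your remark about the maximizers differing is correctly handled since the norm inequality holds uniformly in $v$.
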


\begin{proof}
For any $v\in V_n$, we have
\be
|v(x)|^2\leq k_{n,R}(x) \,\|v\|_{L^2(R)}^2\leq k_{n,R}(x)\, \frac{|D|}{|R|}\,\|v\|_{L^2(D)}^2,
\ee
and
\be
|v(x)|^2\leq k_{n,D}(x) \,\|v\|_{L^2(D)}^2\leq k_{n,D}(x)\, \frac{|S|}{|D|}\,\|v\|_{L^2(S)}^2.
\ee
Optimizing over $v$ gives the upper 	and lower estimates of $k_{n,D}(x)$. 
\end{proof}
Obviously, a framing on $K_{n,D}$ can be readily derived as follows, by
application of the above lemma to any point in $D$.
\begin{prop}
\label{lemmaref}
Assume that there exist a family $\cR$ of reference domains with the following 
properties:
\begin{itemize}
\item[(i)] For all $x\in D$ there exist $R_x\in\cR$ such that $x\in R_x\subset D$.
\item[(ii)] There exists a constant $\beta\in ]0,1]$ such that $|R|\geq \beta\,|D|$ for all $R\in\cR$.
\end{itemize}
Then, one has 
\be
K_{n,D}\leq \beta^{-1}\sup_{x\in D} k_{n,R_x}(x) \leq \beta^{-1}\sup_{R\in \cR} K_{n,R}.
\ee
Likewise, for any $S\in \cR$ such that $D\subset S$ and 
$|D|\geq \beta\,|S|$, one has 
\be
K_{n,D} \geq \beta \sup_{x\in D} k_{n,S}(x).
\ee
\end{prop}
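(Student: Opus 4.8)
The plan is to derive both framings by applying the pointwise comparison of Lemma~\ref{lemmaref2} at each point of $D$ and then passing to the supremum. The key observation is that, by definition, $K_{n,D}=\sup_{x\in D}k_{n,D}(x)$, so it suffices to control $k_{n,D}(x)$ uniformly in $x\in D$ and then take the supremum on both sides.

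For the upper bound, I would fix $x\in D$ and use hypothesis (i) to select a reference domain $R_x\in\cR$ with $x\in R_x\subset D$. Hypothesis (ii) ensures $\beta\,|D|\leq |R_x|$, so the first part of Lemma~\ref{lemmaref2} applies with $R=R_x$ and gives $k_{n,D}(x)\leq \beta^{-1}\,k_{n,R_x}(x)$. Taking the supremum over $x\in D$ yields the first inequality $K_{n,D}\leq \beta^{-1}\sup_{x\in D}k_{n,R_x}(x)$. For the second inequality, I would note that since $x\in R_x$, one has $k_{n,R_x}(x)\leq \sup_{y\in R_x}k_{n,R_x}(y)=K_{n,R_x}\leq \sup_{R\in\cR}K_{n,R}$; taking the supremum over $x\in D$ then closes the chain.

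For the lower bound, given $S\in\cR$ with $D\subset S$ and $\beta\,|S|\leq |D|$, I would apply the converse part of Lemma~\ref{lemmaref2} at each $x\in D$ to obtain $k_{n,D}(x)\geq \beta\,k_{n,S}(x)$, and then take the supremum over $x\in D$ to get $K_{n,D}\geq \beta\sup_{x\in D}k_{n,S}(x)$.

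There is no genuine obstacle here, as the statement is an essentially immediate consequence of Lemma~\ref{lemmaref2}. The only point requiring mild care is that in the upper bound the reference domain $R_x$ depends on the point $x$, so one must apply the pointwise estimate \emph{before} passing to the supremum rather than fixing a single reference domain; hypotheses (i)--(ii) are precisely what make this pointwise-then-supremum argument hold with a uniform constant $\beta$.
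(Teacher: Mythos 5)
Your proof is correct and follows exactly the paper's route: the paper derives this proposition by applying Lemma~\ref{lemmaref2} pointwise at each $x\in D$ and passing to the supremum, which is precisely your argument. Your added remark about choosing $R_x$ before taking the supremum is a sensible clarification but does not change the approach.
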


In what follows, we apply this strategy to spaces $V_n$ of multivariate algebraic polynomials.
Throughout this section, we consider 
\be
V_n=\P_\ell:={\rm span} \{x \mapsto x^\nu=x_1^{\nu_1}\dots x_d^{\nu_d} \; : \; |\nu|=\nu_1+\dots+\nu_d\leq \ell\},
\label{total}
\ee
the space of polynomials with total  degree less or equal to $\ell$, for which we have 
\be
n={d+\ell \choose \ell}.
\ee
We assume $D$ is a bounded open set of $\mathbb R^d$.

It is important to note that $V_n$ is invariant by affine transformation. As a consequence, if $A$ is
any affine transformation, one has
\be
R'=A(R) \implies k_{n,R'}(A(x))=k_{n,R}(x), \quad x\in R,
\ee
and in particular $K_{n,R'}=K_{n,R}$.

\subsection{Lipschitz domains}

In the case of the cube $Q=[-1,1]^d$, we may express $k_{n,Q}$
by using tensorized Legendre polynomials, that is
\be
k_{n,Q}(x)=\sum_{|\nu|\leq \ell} |L_\nu(x)|^2, \quad L_\nu(x)=\prod_{i=1}^d L_{\nu_i}(x_i),
\ee
where the univariate polynomials $t\mapsto L_j(t)$ are normalized in $L^2([-1,1],\frac {dt} 2)$. 
Using this expression, it can be proved by induction on the dimension $d$ that 
\be
K_{n,Q}\leq n^2, \quad n\geq 1,
\ee 
see Lemma 1 in \cite{CCMNT}. Therefore, by affine invariance,
\be
K_{n,R}\leq n^2, \quad n\geq 1,
\label{para}
\ee
for all 
$d$-dimensional parallelogram $R$.  Using this result, we may
bound the growth of Christoffel functions from above for a general class of domains.

\begin{definition}
An open set $D\subset \R^d$ satisfies the inner cone condition if there exist
$\bar r>0$ and $\theta\in (0,\pi)$, such that for all $x\in \o D$, there exists a unit vector $u$ such that the cone
\be
C_{\bar r,\theta}(x,u):=\{x+r\,v,\, 0\leq r \leq \bar r,\,|v|=1,\, u\cdot v \geq \cos(\theta)\}
\ee
is contained in $\o D$. In particular, any Lipschitz domain $D\subset \R^d$ satisfies the inner cone condition.
\end{definition}

\begin{theorem}
Let $D\subset \R^d$ be a bounded domain that satisfies the inner cone condition.
Then, one has 
\be
K_n\leq C_D\,n^{2}, \quad n\geq 1,
\label{boundKnlip}
\ee 
where $C_D$ depends on $d$, $|D|$, and on $\bar r$ and $\theta$ in the previous definition.
\end{theorem}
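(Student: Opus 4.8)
The plan is to combine the pointwise comparison of Lemma~\ref{lemmaref2} (equivalently Proposition~\ref{lemmaref}) with the parallelogram bound \iref{para}. Since \iref{para} gives $K_{n,R}\le n^2$ for every $d$-dimensional parallelogram $R$, it suffices to exhibit, for each $x\in D$, a parallelepiped $R_x$ with $x\in R_x\subset \o D$ whose volume is bounded below by a constant $c_0=c_0(d,\bar r,\theta)>0$ independent of $x$. Indeed, the family $\cR=\{R_x\}_{x\in D}$ then satisfies hypotheses (i)--(ii) of Proposition~\ref{lemmaref} with $\beta=c_0/|D|\in\,]0,1]$, and we conclude $K_n=K_{n,D}\le \beta^{-1}\sup_{R\in\cR}K_{n,R}\le \beta^{-1}n^2=\frac{|D|}{c_0}\,n^2$, which is the claimed bound with $C_D=|D|/c_0$.

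The only real work is the construction of $R_x$, for which I would use the inner cone condition: for $x\in D$ there is a unit vector $u$ with $C_{\bar r,\theta}(x,u)\subset \o D$. Placing $x$ at the origin and $u$ along the first coordinate axis, this cone is $\{\,p:\ p_1\ge 0,\ |p'|\le p_1\tan\theta,\ |p|\le \bar r\,\}$ with $p'=(p_2,\dots,p_d)$; in particular it is convex. I would take as edge vectors from the apex $e_1=a\,u$ and $e_j=b\,u+c\,f_j$ for $j=2,\dots,d$, where $f_2,\dots,f_d$ are the transverse coordinate directions and $a,b,c>0$ are constants (depending only on $d,\bar r,\theta$) to be fixed, and let $R_x$ be the parallelepiped they span with one vertex at $x$. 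By convexity of the cone it is enough to check that every vertex $x+\sum_{i\in S}e_i$ lies in $C_{\bar r,\theta}(x,u)$: the transverse constraint $|p'|\le p_1\tan\theta$ reduces to $c\le b\tan\theta$ (the worst case being a single transverse edge), which I enforce by choosing $c=\tfrac12 b\tan\theta$, while the length constraint $|p|\le\bar r$ is met by taking $a=b$ small enough, e.g. $a=b=\bar r/(2d)$. A column reduction gives $\vol(R_x)=a\,c^{\,d-1}=a^{d}\,(\tfrac12\tan\theta)^{d-1}=:c_0(d,\bar r,\theta)>0$, which is independent of $x$, and by construction $x\in R_x\subset C_{\bar r,\theta}(x,u)\subset\o D$.

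The main obstacle is precisely this containment $x\in R_x$: the apex of the cone cannot be a vertex of an axis-aligned box of positive cross-section lying inside the cone, since the cross-section degenerates to a point at the apex. The shear encoded in the forward components $b\,u$ of the transverse edges $e_j$ is what allows $x$ to be a genuine vertex while keeping the whole parallelepiped inside the convex cone; this is the one nontrivial point. Finally, since $R_x\subset\o D$ and $\partial D$ is Lebesgue-negligible for such domains, one has $\int_{R_x}|v|^2\le\int_D|v|^2$ and $|R_x|\le|D|$, so the hypotheses of Proposition~\ref{lemmaref} hold verbatim and the argument is complete; the resulting constant $C_D=|D|/c_0$ depends only on $d$, $|D|$, $\bar r$ and $\theta$, as asserted.
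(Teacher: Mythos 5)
Your proof is correct and takes essentially the same route as the paper: both reduce the statement to Proposition~\ref{lemmaref} combined with the parallelogram bound $K_{n,R}\le n^2$ from \iref{para}, the only real content being the existence, for each $x\in D$, of a parallelepiped $R_x$ with $x\in R_x\subset \o D$ and $|R_x|\ge c_0(d,\bar r,\theta)$ --- a fact the paper merely asserts and you construct explicitly via the sheared edges $e_j=b\,u+c\,f_j$. One cosmetic caveat: with $c=\tfrac12 b\tan\theta$ the length constraint $|p|\le\bar r$ can fail for $\theta$ near $\pi/2$, so one should cap $c$ (e.g. $c=\tfrac12 b\min(\tan\theta,1)$); this does not affect the argument, since you leave the constants to be fixed in terms of $d,\bar r,\theta$ anyway.
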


\begin{proof}
 The uniform cone condition ensures
that there exists $\kappa=\kappa(\bar r,\theta,d)>0$ such that for any $x\in D$, there exist a parallelogram $R$
such that $x\in R\subset D$ and $|R|=\kappa$. Therefore, applying Proposition \ref{lemmaref} with $\cR$ the family of all parallelograms of area $\kappa$, one
obtains \iref{boundKnlip} with $C_D=\frac {|D|}{\kappa}$.
\end{proof}

\begin{remark}
The bound $K_{n,Q}\leq n^2$ is actually established in \cite{CCMNT} for the more general class
of polynomial spaces of the form
\be
V_n=\P_\Lambda:={\rm span}\{x\mapsto x^\nu\; : \; \nu\in \Lambda\}, \quad \#(\Lambda)=n,
\ee
where $\Lambda\in\N^d$ is downward closed, i.e. such that 
\be
\nu\in \Lambda\quad {\rm and} \quad \wt \nu\leq \nu \implies \wt \nu\in \Lambda.
\ee
These spaces are however not invariant by affine transformation and so 
one cannot apply the above method to treat general domains with inner cone condition. 
On the other hand, these spaces are invariant by affine transformation of the form
$x\mapsto  x_0+ M x$ where $M$ is a diagonal matrix, therefore transforming the cube $Q$
into an arbitrary rectangle $R$ aligned with the coordinate axes. As observed in \cite{AH},
this leads to a bound of the form \iref{boundKnlip} for any domain $D$ that
satisfies the following geometrical property: for all $x\in D$ there exists a rectangle $R$ aligned
with the coordinate axes such that $x\in R\subset D$ and $|R|\geq \beta\, |D|$. Note that  this property does not readily follows from a smoothness property of the boundary, in particular there exists smooth domains for which this property does not hold.
\end{remark}

\subsection{Smooth domains}

We next investigate smooth domains. For this purpose, we
replace parallelograms by ellipsoids as reference domains. 
In the case of the unit ball $B:=\{|x|\leq 1\}$, it is known \cite{Xu} that
the Christoffel function reaches its maximum on the 
unit sphere $S:=\{|x|=1\}$, where we have
\be
k_{n,B}(x)={\ell+d+1 \choose \ell}+{\ell+d-2 \choose \ell-1}.
\ee
In order to estimate how this quantity scales with $n={\ell+d\choose \ell}$
we use the fact that for any integer $m$, one has  
\be
e\left(\frac m e\right)^m\leq m! \leq m^m.
\ee
For the lower bound, we bound from below the first term
\begin{align*}
{\ell+d+1 \choose \ell}&={\ell+d \choose \ell}\,\frac{\ell+d+1}{d+1}=n\,\frac{\ell+d+1}{d+1}\\
&\geq \frac{n}{d\,e^{1/d}}(\ell+d+1) \geq  \frac{n}{e\,(d!)^{1/d}}\,\left(\frac{(\ell+d)!}{\ell!}\right)^{1/d}=e^{-1}\,n^{\frac{d+1}d},
\end{align*}
which leads to
\be
K_{n,B}\geq k_{n,B}(x) \geq e^{-1}\,n^{\frac {d+1}{d}}, \quad x\in S.
\label{knsphere}
\ee
For the upper bound, we write 
\begin{align*}
{\ell+d+1 \choose \ell}+{\ell+d-2 \choose \ell-1}&={\ell+d \choose \ell} \(\frac{\ell+d+1}{d+1}+\frac{\ell d}{(\ell+d)(\ell+d-1)}\)\\
& \leq n \(\frac{\ell+d+1}{d+1}+1\)=n\left(\frac{\ell}{d+1}+2\right),
\end{align*}
Since 
\be
n^{1/d}=(d!)^{-1/d} \left(\frac{(\ell+d)!}{\ell!}\right)^{1/d} \geq \frac {\ell+1}{d}\geq\frac{\ell}{d+1},
\ee
we find that
\be
k_{n,B}(x) \leq  \left(n^{1/d}+2\right)n\leq 3\,n^{\frac {d+1}{d}}.
\ee
By affine invariance, we thus obtain 
\be
e^{-1}\,n^{\frac {d+1}{d}}\leq K_{n,E}\leq 3\,n^{\frac {d+1}{d}},
\label{ellipsoid}
\ee 
for all ellipsoids $E$.  This leads to the following result.

\begin{theorem}
\label{upper bound smooth}
Assume $D\subset \R^d$ is a bounded domain with $\cC^2$ boundary.
Then, one has 
\be
K_n\leq C_D\, n^{\frac {d+1}{d}}, \quad n\geq 1, 
\label{Knupsmooth}
\ee
where $C_D$ depends on $D$.
\end{theorem}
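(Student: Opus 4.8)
The plan is to apply the comparison Proposition \ref{lemmaref} with a family $\cR$ consisting of balls of a single fixed radius. Balls are particular ellipsoids, so the favorable bound \iref{ellipsoid}, namely $K_{n,R}\leq 3\,n^{(d+1)/d}$, is available for every $R\in\cR$. The entire argument then reduces to one geometric fact: for a bounded $\cC^2$ domain one can inscribe, through \emph{every} point $x\in D$, a ball of a radius $r_0>0$ that does not depend on $x$ and is entirely contained in $D$. Granting this, each such ball has the fixed volume $\kappa=\omega_d r_0^d$ (with $\omega_d=|B|$), so hypotheses (i) and (ii) of Proposition \ref{lemmaref} hold with $\beta=\kappa/|D|\in\,]0,1]$, and the proposition yields $K_n\leq \beta^{-1}\,3\,n^{(d+1)/d}$, which is \iref{Knupsmooth} with $C_D=\tfrac{3|D|}{\omega_d r_0^d}$.

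First I would record the \emph{uniform interior ball condition}: since $\partial D$ is compact and of class $\cC^2$, its principal curvatures are uniformly bounded, and a standard argument produces $r_0>0$ such that for every $p\in\partial D$ the ball $B(p+r_0\,\nu(p),\,r_0)$ centered on the inner unit normal $\nu(p)$ is contained in $D$. Next I would pass from boundary points to arbitrary $x\in D$ by a dichotomy on $\delta:=\mathrm{dist}(x,\partial D)$. If $\delta\geq r_0$, then $B(x,r_0)\subset D$ is a ball of radius $r_0$ containing $x$. If $\delta<r_0$, let $p\in\partial D$ be a nearest boundary point, so that $x-p=\delta\,\nu(p)$; writing $c=p+r_0\,\nu(p)$ one gets $|x-c|=r_0-\delta<r_0$, hence $x$ lies in the interior ball $B(c,r_0)\subset D$. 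In either case $x$ belongs to a ball of radius $r_0$ contained in $D$, which is exactly the reference domain required for condition (i).

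The main obstacle is this geometric input; everything after it is a one-line application of Proposition \ref{lemmaref} together with \iref{ellipsoid}. The crucial point is that $\cC^2$ regularity (in fact $\cC^{1,1}$ would suffice) is precisely what guarantees the uniformity of $r_0$: bounding the curvature of $\partial D$ prevents the inscribed ball from shrinking as $x$ approaches the boundary. This is what makes the conclusion genuinely stronger than in the Lipschitz case, where no uniform inscribed ball exists near corners and one is forced back to parallelograms and the exponent $2$. By affine invariance one could equally inscribe non-spherical ellipsoids and still invoke \iref{ellipsoid}, but balls already deliver a uniform volume lower bound, so no anisotropic refinement is needed here.
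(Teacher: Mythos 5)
Your proof is correct and follows essentially the same route as the paper: both apply Proposition \ref{lemmaref} together with the ellipsoid bound \iref{ellipsoid}, the paper using inscribed ellipsoids of volume at least $\beta|D|$ and you using inscribed balls of a fixed radius $r_0$ furnished by the uniform interior ball condition --- a variant the paper itself notes is admissible in the remark following the theorem, at the cost of a possibly larger constant $C_D$. Your write-up is in fact more explicit than the paper's on the geometric input (the dichotomy on $\mathrm{dist}(x,\partial D)$ and the tangent interior ball), which the paper compresses into the phrase ``since the boundary has finite curvature.''
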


\noindent
\begin{proof} Since the boundary of $D$ has finite curvature, we are ensured 
that there exists a $\beta>0$ such that for any $x\in D$, 
there exist an ellipsoid $E$
such that $x\in E\subset D$ and $|E|\geq \beta\, |D|$. Therefore, applying
Proposition \ref{lemmaref} with $\cR$ the family of ellipsoids 
with area larger than $\beta\, |D|$, we obtain \iref{Knupsmooth} with $C_D=3\,\beta^{-1}$. 
\end{proof}

\begin{remark}
In the above argument, one could simply use balls instead of ellipsoids, however at the price
of diminishing the value of $\beta$ and thus raising the constant $C_D$.
\end{remark}

We next give a general lower bound for $K_n$ showing that the above rate
for smooth domains is sharp.

\begin{theorem}
\label{lower bound smooth}
Let $D\subset \R^d$ be an arbitrary bounded domain, and let $B$ be its Chebychev ball,
that is, the smallest closed ball that contains $D$.
Then, one has 
\be
K_{n,D}\geq e^{-1}\,\frac{|B|}{|D|}\,n^{\frac {d+1}{d}}, \quad n\geq 1.
\ee
\end{theorem}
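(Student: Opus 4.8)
The plan is to reduce the estimate on the general domain $D$ to the sharp lower bound already available for balls, by comparing $D$ with its circumscribing ball $B$. The essential input is the sphere computation \iref{knsphere}. Recall that the pointwise inverse Christoffel function is affine invariant, $k_{n,R'}(A(x))=k_{n,R}(x)$; applying this to an affine map $A$ sending the unit ball onto an arbitrary ball $B'$ and using \iref{knsphere}--\iref{ellipsoid}, one gets that for \emph{every} Euclidean ball $B'$ and every point $x$ on its bounding sphere $\partial B'$ one has $k_{n,B'}(x)\ge e^{-1}\,n^{\frac{d+1}{d}}$. Thus it suffices to exhibit a single point of $D$ lying on $\partial B$ and to transfer this lower bound from $B$ to $D$.

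First I would produce the contact point. Since $D$ is bounded, $\overline{D}$ is compact, and since $B$ is the \emph{smallest} closed ball containing $D$ (equivalently containing $\overline{D}$), the set $\overline{D}\cap\partial B$ is nonempty: otherwise $\overline{D}$ would be a compact subset of the open ball $\mathrm{int}(B)$, hence contained in a concentric ball of strictly smaller radius, contradicting minimality. Fix such a point $x_0\in\overline{D}\cap\partial B$. Because $k_{n,D}$ and $k_{n,B}$ are polynomials, hence continuous, the pointwise inequalities below extend from $D$ to $\overline{D}$, so evaluating at $x_0$ is legitimate and $K_{n,D}=\sup_{x\in D}k_{n,D}(x)\ge k_{n,D}(x_0)$.

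The final step is the transfer, via the converse comparison in Lemma \ref{lemmaref2} applied with the enclosing domain $S=B\supseteq D$. This relates $k_{n,D}(x_0)$ to $k_{n,B}(x_0)$ through the volume ratio of $D$ and $B$; combined with the sphere bound $k_{n,B}(x_0)\ge e^{-1}n^{\frac{d+1}{d}}$ it yields
\be
K_{n,D}\ \ge\ k_{n,D}(x_0)\ \ge\ e^{-1}\,\frac{|B|}{|D|}\,n^{\frac{d+1}{d}},
\ee
which is the claim. I expect the delicate points to be twofold. First is the contact-point argument together with the passage to the closure $\overline{D}$: the contact point need not belong to the open set $D$, so one must justify evaluating the Christoffel functions there by continuity. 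Second, and this is the crux, is the careful bookkeeping of the volume ratio in Lemma \ref{lemmaref2} so that it enters in the stated orientation; getting the $|B|/|D|$ factor (rather than its reciprocal) is exactly the point at which the proof must be done with care. The rate $n^{\frac{d+1}{d}}$ itself requires no new work, being inherited verbatim from the explicit computation \iref{knsphere} on the sphere, and the sharpness for smooth domains then follows by pairing this bound with the upper bound of Theorem \ref{upper bound smooth}.
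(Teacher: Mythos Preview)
Your approach is exactly the paper's: locate a contact point $x_0\in\overline D\cap\partial B$ (via minimality of the Chebychev ball), invoke the second part of Lemma \ref{lemmaref2} with $S=B$, and plug in the sphere bound \iref{knsphere}. The continuity argument you add to justify evaluating at $x_0\in\overline D$ is a nice touch that the paper omits.

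There is, however, a genuine bookkeeping issue---precisely the one you flag as ``the crux'' but do not actually carry out. Lemma \ref{lemmaref2} with $S=B\supset D$ and $\beta=|D|/|B|$ yields
\[
k_{n,D}(x_0)\ \ge\ \frac{|D|}{|B|}\,k_{n,B}(x_0)\ \ge\ e^{-1}\,\frac{|D|}{|B|}\,n^{\frac{d+1}{d}},
\]
with the ratio $|D|/|B|\le 1$, not $|B|/|D|\ge 1$. The paper's own proof writes exactly this intermediate factor $|D|/|B|$ and then silently switches to $|B|/|D|$ in the final expression; this is a typo that also propagated into the theorem statement. The version with $|B|/|D|$ is in fact false: for $n=1$ one always has $K_{1,D}=1$, whereas $e^{-1}|B|/|D|$ can be made arbitrarily large by taking, say, a thin rectangle $D=[-1,1]\times[-\varepsilon,\varepsilon]$. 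So your instinct that ``getting the $|B|/|D|$ factor is exactly the point at which the proof must be done with care'' was well placed: doing it carefully shows the correct constant is the reciprocal $|D|/|B|$.
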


\noindent
\begin{proof} As $\o D$ is compact and $B$ is the smallest possible ball containing $\o D$, there exists a point $x\in \o D\cap \partial B$, and by Lemma \ref{lemmaref2} one has 
\be
K_{n,D}\geq k_{n,D}(x)\geq \frac{|D|}{|B|}\,k_{n,B}(x) \geq e^{-1}\,\frac{|B|}{|D|}\,n^{\frac {d+1}{d}},
\ee
where the last inequality follows from \iref{knsphere} and affine invariance.
\end{proof}

\subsection{Pointwise bounds for piecewise smooth domains}

As already observed, it may be needed to get sharper bounds on $k_n(x)$ that depend on the 
point $x$, in particular when checking the validity of \iref{hproperty}. In the case of algebraic polynomials in dimension
$d=2$, so $n=\frac{(\ell+1)(\ell+2)}{2}$, such bounds have been obtained 
for a particular class of piecewise smooth domains with exiting corners, 
in the following result from \cite{Pry}.

\begin{theorem}
\label{theopu}
Let $D\subset \R^2$ be a bounded open such that $\partial D=\cup_{i=1}^K \Gamma_i$, where the $\Gamma_i$ are one-to-one $C^2$ curves that intersect only at their extremities, at which points 
the interior angles belong to $(0,\pi)$.  Then, there exists a constant $C_D$ that only depends on $D$
such that, for all $x\in D$, 
\be
C_D^{-1}\,k_{n}(x)\leq n\,\underset{(i,j)\in S}{\min} \,\rho_i(x)\,\rho_j(x) \leq C_D\,k_{n}(x), \quad n\geq 1,
\ee
where $S$ consists of the $(i,j)$ such that $\Gamma_i$ and $\Gamma_j$ intersects, and
$\rho_i(x):=\min\(\ell,d(x,\Gamma_i)^{-1/2}\)$.
\end{theorem}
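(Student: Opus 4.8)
The plan is to establish matching pointwise upper and lower bounds by localizing the estimate and reducing, in every regime, to a single model computation on an affine image of a right-angle corner, to which the comparison Lemma~\ref{lemmaref2} is then applied. Since $C_D$ may depend on $D$, I would first cover the compact set $\o D$ by finitely many neighbourhoods: a neighbourhood of each corner where exactly two arcs $\Gamma_i,\Gamma_j$ are nearby, a neighbourhood of each arc $\Gamma_i$ away from its endpoints, and the remaining interior bulk. In all cases the relevant quantity is $n\,\rho_i(x)\rho_j(x)$ for the locally incident pair $(i,j)\in S$, with the understanding that for single-arc and interior points one (or both) of the factors $\rho_i(x)=\min(\ell,d(x,\Gamma_i)^{-1/2})$ is of order one; this is why a single pairwise product governs the local behaviour.

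The model estimate I would prove first is for a right-angle reference corner, i.e. a rectangle $W=[0,a]\times[0,b]$: for $x=(\delta_1,\delta_2)$ with $\delta_s$ the distances to the two incident edges, $k_{n,W}(x)\sim n\,\min(\ell,\delta_1^{-1/2})\,\min(\ell,\delta_2^{-1/2})$. The lower bound comes from a tensor trial $v=p_a(x_1)\,q_b(x_2)$, where $p_a,q_b$ are the univariate degree-$a,b$ Christoffel extremizers toward the edges $x_1=0,x_2=0$, whose near-endpoint values reproduce the factors $\min(\ell,\delta_s^{-1/2})^2$; optimizing the allocation $a+b\le\ell$ yields the claimed product and, at the corner itself, the saturated value $\sim n^2$. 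The upper bound is the delicate direction, because the total-degree constraint couples the two variables: here I would expand an arbitrary $v\in\P_\ell$ in the tensorized Legendre basis and bound $|v(x)|^2/\|v\|_{L^2(W)}^2$ by the supremum over degree splittings of the product of the two univariate Christoffel functions, recovering the same product up to a constant.

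To transfer the model estimate to $D$, I would, at each $x$, use the upper part of Lemma~\ref{lemmaref2} with an inscribed reference corner $R_x\subset D$ and the lower part with a circumscribed one $S_x\supset D$, both of measure comparable to $|D|$. At a corner, the two incident $C^2$ arcs are sandwiched, at the local scale, between two straight wedges obtained by perturbing the tangent lines; mapping the tangent directions affinely onto the coordinate axes (which preserves $k_{n,\cdot}$ and distorts the distances $d(x,\Gamma_i)$ only by bounded factors) turns $R_x$ and $S_x$ into right-angle corners to which the model estimate applies, and the resulting bounds match up to constants depending only on the opening angle and on the $C^2$ norms. At single-arc points the same construction degenerates to a tangent rectangle with one long side, giving a single nontrivial factor, and in the bulk it gives $k_n\sim n$.

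I expect the corner to be the main obstacle, on two counts. First, the sandwiching must be uniform in both $x$ (over the corner neighbourhood) and $n$: one has to check that replacing each arc by its tangent line changes $k_{n,\cdot}(x)$ by at most a bounded factor, which uses the $C^2$ deviation $O(t^2)$ of the arc from its tangent at scale $t$, together with the strict convexity of the interior angle in $(0,\pi)$ to prevent the two arcs from pinching and to keep the inscribed and circumscribed wedges at comparable openings. Second, the product upper bound on the model corner does not follow from tensorization and must be proved directly from the total-degree structure, with exponents sharp enough to saturate at the corner value $n^2$; this is exactly the pointwise estimate carried out in \cite{Pry}, and once it is in hand the interior and single-arc regimes, and the matching of constants across the overlaps of the finite cover, are routine.
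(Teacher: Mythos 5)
First, a point of comparison that matters for this review: the paper gives \emph{no} proof of Theorem \ref{theopu}; it is imported verbatim from \cite{Pry}, so the only internal material to measure your sketch against is the adjacent toolbox of \S 5.5--5.6. Judged on its own terms, your sketch has a sensible architecture (localization, model corner, comparison), but it misplaces the difficulty and, at the two points where the real work happens, either relies on a step that fails or falls back on citing \cite{Pry} itself --- which makes the argument circular as a proof of a theorem that is quoted from \cite{Pry}. Concretely, the step you single out as ``the delicate direction'' --- the upper bound on the model rectangle --- is in fact immediate from tensorization: $\P_\ell\subset\P_\ell\otimes\P_\ell$, the inverse Christoffel function of the tensor space with respect to the product measure factorizes into the two univariate Legendre Christoffel functions, each of order $\ell\,\min(\ell,\delta_s^{-1/2})$, and $\ell^2\sim n$ when $d=2$, so no degree-coupling issue arises. (This same computation shows that, as printed, the displayed formula cannot be right with $\min$: at distance $\delta$ from the midpoint of an edge of the unit square one has $\min_{(i,j)\in S}\rho_i(x)\rho_j(x)=\cO(1)$ while $k_n(x)\sim n\,\delta^{-1/2}$. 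The intended quantity --- and the one your phrase ``the locally incident pair'' implicitly targets --- is the \emph{maximum} over incident pairs; the $\min$ survives from the formulation in \cite{Pry} in terms of $\lambda_n=1/k_n$.)

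The genuine gaps are in the transfer from the model corner to $D$, which you treat as routine. For the upper bound at a corner $c$, replacing each arc by a tangent line perturbed by the $C^2$ deviation $\cO(t^2)$ produces an inscribed straight wedge, but in the regime $d(x,\Gamma_i)\ll |x-c|^2$ the point $x$ may fall outside that wedge, or lie at a distance from its straight side that is not comparable to $d(x,\Gamma_i)$; your sandwiching argument breaks down precisely there, and this is the regime that the extension technique of \cite{DP} (Proposition \ref{extension thm} in this paper, which places a tangent ellipse at the nearest boundary point rather than a wedge at the corner) is designed to handle. For the lower bound, Lemma \ref{lemmaref2} requires $D\subset S$ \emph{globally} with $|D|\geq\beta|S|$; since the arcs may bulge outward while all interior angles stay in $(0,\pi)$, the domain need not be convex and a corner of $D$ need not be an extreme point of any circumscribing wedge of opening less than $\pi$, so the circumscribed-comparison route collapses. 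The standard fix --- and the one this paper itself uses in Theorem \ref{lower bound R alpha} --- is to forgo comparison entirely for the lower bound and instead exhibit an explicit needle polynomial concentrated at $x$ (the fast-decreasing polynomials of Lemma \ref{bound Pnm}, composed with $|x-\bar x|^2$ or with linear forms adapted to the two incident arcs), controlling its $L^2$ norm over \emph{all} of $D$ by a dyadic decomposition of $D$ around $x$. Until those two mechanisms are supplied, the proposal reduces the theorem to the very estimate of \cite{Pry} it is meant to establish.
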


For the square domain $D=Q=[-1,1]^2$, this implies that $k_{n,Q}(x)\sim n\, \ell^2\sim n^2$ when $x$ is close enough to a corner, and we retrieve the bound $K_{n,Q} \leq C_D \,n^2$ from \iref{boundKnlip}. It is also proved that $k_n(x)\sim\,n\, \min\(\ell,d(x,\partial D)^{-1/2}\)$ for bidimensional domains with $\cC^2$ boundary,
which is consistent with the global bound \iref{Knupsmooth} in the case $d=2$.

\subsection{Rate of growth of $K_{n,D}$ and order of cuspitality}

We end this section by a more technical but systematic approach which allows us
to estimate the rate of growth of the inverse Christoffel function in a sharp way
for domains $D$ that could either be smooth, of $\alpha$-H\"older boundary, or even
with cusps of a given order. It is based on using the following more elaborate reference domain that describes a
certain order of smoothness at the origin.

\begin{definition}
For $\alpha_1,\dots,\alpha_{d-1}\in ]0,2]$, denote $R_{\alpha_1,\dots,\alpha_{d-1}}$ the reference domain
\be
R_{\alpha_1,\dots,\alpha_{d-1}}:=\left\{x\in \mathbb [-1,1]^d, \ \max_{1\leq i \leq d-1} |x_i|^{\alpha_i}\leq x_d\right\}.
\ee
\end{definition}

We shall establish upper and lower bounds for $K_{n,D}$ based
on comparisons between $D$ and affine transformations of this reference domain,
by adapting certain techniques and results from \cite{DP}.
The upper bound is as follows.

\begin{theorem}
\label{upper bound R alpha}
Let $D$ be a bounded domain. Assume there exist $\alpha_1,\dots,\alpha_{d-1}\in ]0,2]$ and $\beta>0$ such that, for all $x\in D$, one can find an affine map $A$ such that $A(0)=x$, $A(R_{\alpha_1,\dots,\alpha_{d-1}})\subset \o D$ and $|A(R_{\alpha_1,\dots,\alpha_{d-1}})|\geq \beta\,|D|$. Then
\be
K_{n,D}\leq C_D\, n^{\frac{1}{d}\left(2+\sum_{i=1}^{d-1}2/ {\alpha_i}\right)},
\label{eqn upper bound R alpha}
\ee
where $C_D$ is a constant depending only on $D$.
\end{theorem}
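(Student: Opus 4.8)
The plan is to follow the comparison philosophy of \S5.1: reduce the estimate on the general domain $D$ to a single estimate on the fixed reference domain $R_{\alpha_1,\dots,\alpha_{d-1}}$, and then bound the inverse Christoffel function of that model domain. First I would invoke Proposition \ref{lemmaref} with $\cR$ the family of all affine images $A(R_{\alpha_1,\dots,\alpha_{d-1}})$ arising in the hypothesis. Each such image is contained in $\o D$, contains the point $x=A(0)$, and has volume at least $\beta\,|D|$, so assumptions (i) and (ii) of the proposition are met. The decisive feature is the affine invariance $K_{n,A(R)}=K_{n,R}$ noted above, which gives $K_{n,A(R_{\alpha_1,\dots,\alpha_{d-1}})}=K_{n,R_{\alpha_1,\dots,\alpha_{d-1}}}$ for every admissible $A$; hence the supremum over $\cR$ collapses to a single value and the proposition yields $K_{n,D}\leq\beta^{-1}\,K_{n,R_{\alpha_1,\dots,\alpha_{d-1}}}$. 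The whole problem is thereby reduced to proving $K_{n,R_{\alpha_1,\dots,\alpha_{d-1}}}\leq C\,n^{\frac1d(2+\sum_i 2/\alpha_i)}$, with $\beta^{-1}$ absorbed into $C_D$.

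For this core estimate it is convenient to pass to the polynomial degree via $n=\binom{d+\ell}{\ell}\sim\ell^d$, so that the target becomes the Nikolskii-type inequality $\|v\|_{L^\infty(R_\alpha)}^2\lesssim\ell^{\,2+\sum_i 2/\alpha_i}\,\|v\|_{L^2(R_\alpha)}^2$ for all $v\in\P_\ell$, where I abbreviate $R_\alpha=R_{\alpha_1,\dots,\alpha_{d-1}}$. Two consistency checks guide the mechanism: for $\alpha_i\equiv1$ the exponent is $2$, matching \iref{boundKnlip}, and for $\alpha_i\equiv2$ it is $(d+1)/d$, matching \iref{Knupsmooth}; moreover the cross-section of $R_\alpha$ at height $x_d=t$ is the anisotropic box $\prod_i\{|x_i|\leq t^{1/\alpha_i}\}$. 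This strongly suggests that the sharp exponent factorizes as one Legendre-endpoint factor $\ell^2$ from the normal direction $x_d$ (the cusp tip playing the role of the endpoint $t=0$) together with one Jacobi-endpoint factor $\ell^{2/\alpha_i}$ per tangential direction.

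Concretely I would decompose $R_\alpha$ into the dyadic slabs $S_k=R_\alpha\cap\{2^{-k-1}<x_d\leq 2^{-k}\}$, each of which is comparable, after an anisotropic affine rescaling $A_k$ that preserves the polynomial degree, to the unit cube. On each rescaled slab one controls $v\circ A_k$ by a Christoffel estimate, while the volume ratio $|R_\alpha|/|S_k|\sim 2^{k(1+\sum_i1/\alpha_i)}$ quantifies how the local bound degrades as $k$ grows. The key analytic input is that a polynomial of degree $\ell$ cannot resolve the tip below the critical height $t_*\sim\ell^{-2}$, so the slabs with $2^{-k}\gtrsim t_*$ contribute a geometric sum dominated by its last term at scale $t_*$; substituting $t_*\sim\ell^{-2}$ into the sharp per-scale factor $\ell^2\prod_i t^{-1/\alpha_i}$ reproduces exactly $\ell^{2+\sum_i 2/\alpha_i}$, which after $n\sim\ell^d$ is the claimed rate.

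The main obstacle is precisely establishing this per-scale factor. A naive slab-by-slab comparison using only the cube bound $K_{n,Q}\leq n^2$ from \iref{para} is too lossy: it ignores that $v$ is globally of degree $\ell$ and treats each rescaled slab as though its worst point were a genuine corner, overestimating the exponent by a power of $\ell$ that does not cancel in the summation. Recovering the sharp per-scale factor $\ell^2\prod_i t^{-1/\alpha_i}$ rather than the pessimistic one requires anisotropic, tangential polynomial inequalities that exploit the coupling between the normal and tangential directions and the variable integration limits of the cross-sections; this is exactly the step for which the techniques and results of \cite{DP} are to be adapted. Once this uniform per-scale bound and the critical-scale truncation at $t_*\sim\ell^{-2}$ are in hand, summing over $k$ and translating back through $n\sim\ell^d$ and the reduction $K_{n,D}\leq\beta^{-1}K_{n,R_\alpha}$ completes the proof.
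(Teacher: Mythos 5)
The reduction step is where your argument breaks. Proposition \ref{lemmaref} gives the two-tier bound $K_{n,D}\leq\beta^{-1}\sup_{x\in D}k_{n,R_x}(x)\leq\beta^{-1}\sup_{R\in\cR}K_{n,R}$, and you invoke the second, weaker inequality, reducing the problem to $K_{n,R_{\alpha_1,\dots,\alpha_{d-1}}}\leq C\,n^{\frac1d(2+\sum_i2/\alpha_i)}$. That reduced statement is false in general: the reference domain $R_{\alpha_1,\dots,\alpha_{d-1}}$ is the region above the cuspidal graph intersected with the cube $[-1,1]^d$, and it has ordinary Lipschitz corners away from the tip (for instance where the surfaces $|x_i|^{\alpha_i}=x_d$ meet the face $x_d=1$). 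At such corners $k_{n,R_{\alpha_1,\dots,\alpha_{d-1}}}$ grows like $n^2=\ell^{2d}$, whereas for $\alpha_1=\dots=\alpha_{d-1}=2$ your target exponent gives only $\ell^{d+1}$, which is strictly smaller for $d\geq2$; your own consistency check against \iref{Knupsmooth} is misleading here because the reference domain is not smooth. So $K_{n,R_{\alpha_1,\dots,\alpha_{d-1}}}$ does not obey the claimed bound and the problem cannot be reduced to it. What the hypothesis actually buys, via the \emph{first} inequality of Proposition \ref{lemmaref} together with the affine invariance $k_{n,A(R_{\alpha_1,\dots,\alpha_{d-1}})}(A(0))=k_{n,R_{\alpha_1,\dots,\alpha_{d-1}}}(0)$, is $K_{n,D}\leq\beta^{-1}\,k_{n,R_{\alpha_1,\dots,\alpha_{d-1}}}(0)$: only the pointwise value at the distinguished point $0$ (the tip) is needed, and that is the quantity the paper bounds by $C\,\ell^{2+\sum_i2/\alpha_i}$.

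Second, even granting the correct pointwise target, your sketch does not supply the key estimate. The dyadic-slab heuristic identifies the right critical scale $t_*\sim\ell^{-2}$ and the right exponent, but the uniform ``per-scale factor'' that makes the geometric sum close is precisely the hard step, and you explicitly defer it to unspecified adaptations of \cite{DP}. The paper's route is different and self-contained at this point: it builds an explicit affine map $T$ squeezing the unit ball into $R_{\alpha_1,\dots,\alpha_{d-1}}$, with tangential contractions of order $\ell^{-(2-\alpha_i)/\alpha_i}$ justified by Lemma \ref{x2 xalpha}, arranged so that $T\left(0,\dots,0,1+\frac{1}{3\ell^2}\right)=0$, and then applies the extension theorem (Proposition \ref{extension thm}) to get $k_{n,R_{\alpha_1,\dots,\alpha_{d-1}}}(0)\leq c\,|\det T|^{-1}\ell^{d+1}\leq C\,\ell^{2+\sum_i2/\alpha_i}$ in one stroke. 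As written, your proposal contains both a false intermediate claim and an unproven core estimate.
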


This result is obtained with the extension strategy proposed in \cite{DP}, which consists in combining Proposition \ref{extension thm} below with a comparison of domains. Such a method was applied in the same paper to the case of smooth domains, polytopes, some 2-dimensional domains, and $l^\alpha$ balls in $\mathbb R^d$, which all correspond to the situation $\alpha_1=\dots=\alpha_{d-1}\in [1,2]$ in our theorem. We give below a series of intermediate
results that lead to the proof of Theorem \ref{upper bound R alpha}.

\begin{lemma}
\label{x2 xalpha}
For $\alpha\in ]0,2]$ and $n\geq 1$, the function $f:x\mapsto \frac{1}{9\ell^2}+\beta x^2-|x|^\alpha$ remains non-negative on $\mathbb R$ as soon as $\beta\geq \frac{\alpha}{2}\left(\frac{9}{2}\,(2-\alpha)\,\ell^2\right)^{\frac{2-\alpha}{\alpha}}$.
\end{lemma}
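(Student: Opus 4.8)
The goal is to find a threshold $\beta$ ensuring that $f(x)=\frac{1}{9\ell^2}+\beta x^2-|x|^\alpha$ stays non-negative on all of $\mathbb{R}$. Since $f$ is even, I would restrict attention to $x\geq 0$ and study $g(x)=\frac{1}{9\ell^2}+\beta x^2-x^\alpha$. The plan is to locate the minimum of $g$ on $[0,\infty)$ and impose that the minimal value be non-negative, which will pin down the required size of $\beta$.

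First I would handle the boundary case $\alpha=2$ separately, where $f(x)=\frac{1}{9\ell^2}+(\beta-1)x^2$ is non-negative as soon as $\beta\geq 1$; note the stated bound degenerates to $\beta\geq 1$ at $\alpha=2$, matching this. For $\alpha\in]0,2[$, I would differentiate: $g'(x)=2\beta x-\alpha x^{\alpha-1}$, which vanishes at the unique critical point $x_*=\left(\frac{\alpha}{2\beta}\right)^{\frac{1}{2-\alpha}}$. Since $g\to+\infty$ as $x\to\infty$ and $g(0)=\frac{1}{9\ell^2}>0$, this $x_*$ is the global minimizer on $[0,\infty)$. The main computation is then to evaluate $g(x_*)$ and require it to be non-negative.

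The heart of the matter is the algebra of substituting $x_*$ back into $g$. Writing $x_*^{2-\alpha}=\frac{\alpha}{2\beta}$, one gets $\beta x_*^2 = \frac{\alpha}{2}x_*^\alpha$, so that $g(x_*)=\frac{1}{9\ell^2}-\left(1-\frac{\alpha}{2}\right)x_*^\alpha = \frac{1}{9\ell^2}-\frac{2-\alpha}{2}\,x_*^\alpha$. The condition $g(x_*)\geq 0$ becomes $x_*^\alpha\leq \frac{2}{9\ell^2(2-\alpha)}$, i.e.\ $\left(\frac{\alpha}{2\beta}\right)^{\frac{\alpha}{2-\alpha}}\leq \frac{2}{9\ell^2(2-\alpha)}$. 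Solving this inequality for $\beta$, by raising both sides to the power $\frac{2-\alpha}{\alpha}$ and rearranging, should reproduce exactly the stated threshold $\beta\geq \frac{\alpha}{2}\left(\frac{9}{2}(2-\alpha)\ell^2\right)^{\frac{2-\alpha}{\alpha}}$.

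I do not expect a genuine obstacle here, since the function is smooth and strictly convex-behaving enough to have a single interior minimum; the only care needed is tracking the exponents correctly when inverting the inequality for $\beta$ (the exponent $\frac{2-\alpha}{\alpha}$ is positive, so the inequality direction is preserved) and confirming that increasing $\beta$ only raises $g$, so the bound is a genuine sufficient threshold. I would also double-check the two endpoints $\alpha\to 0^+$ and $\alpha=2$ for consistency as a sanity check on the final formula.
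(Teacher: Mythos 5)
Your proposal is correct and follows essentially the same route as the paper: reduce to $x\geq 0$ by symmetry, locate the unique critical point $x_0=\left(\frac{\alpha}{2\beta}\right)^{\frac{1}{2-\alpha}}$, compute the minimum value $\frac{1}{9\ell^2}-\frac{2-\alpha}{2}\left(\frac{\alpha}{2\beta}\right)^{\frac{\alpha}{2-\alpha}}$, and solve the non-negativity condition for $\beta$. The only (harmless) addition is your separate treatment of the endpoint $\alpha=2$, which the paper leaves implicit.
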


\begin{proof}
As $f$ is symmetric, one only has to consider this function on $\mathbb R_+$. For $x>0$, $f'(x) =2\beta x-\alpha x^{\alpha-1}$ cancels only at $x_0=\left(\frac{\alpha}{2\beta}\right)^{\frac{1}{2-\alpha}}$, so
\be
\min_{x\in \mathbb R} f(x)=f(x_0)=\frac{1}{9\ell^2}-\frac{2-\alpha}{2}\left(\frac{\alpha}{2\beta}\right)^{\frac{\alpha}{2-\alpha}},
\ee
which is non-negative if and only if $\beta\geq \frac{\alpha}{2}\left(\frac{9}{2}\,(2-\alpha)\,\ell^2\right)^{\frac{2-\alpha}{\alpha}}$.
\end{proof}

The following result is Theorem 5.2 from \cite{DP}.

\begin{prop}
\label{extension thm}
 Suppose $D\subset \mathbb R^d$ is a compact set and $T$ is an affine transformation of $\mathbb R^d$ such that $T(B(0,1))\subset D$. Then
 \be
 k_{n,D}\left(T\left(0,\dots,0,1+\frac{1}{3\ell^2}\right)\right)\leq c\,|\det T|^{-1}\ell^{d+1}.
 \ee
where $c$ depends only on $d$.
\end{prop}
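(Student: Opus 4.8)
The plan is to reduce the statement, by an affine change of variables, to a single estimate for the unit ball $B=B(0,1)$, and then to prove that estimate by combining a one–dimensional Chebyshev growth bound with the $L^\infty$–$L^2$ bound $K_{n,B}\le 3\,n^{(d+1)/d}$ already established for ellipsoids in \iref{ellipsoid}. Throughout I read the Christoffel function with the unnormalized Lebesgue integral, $k_{n,D}(x)=\max_{v\in V_n}|v(x)|^2\big/\int_D|v|^2\,dx$ (the convention of \cite{DP}), which is the normalization under which the Jacobian factor $|\det T|^{-1}$ appears.

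First I would perform the change of variables $x=T(y)$. Since $V_n=\P_\ell$ is invariant under affine maps, $w:=v\circ T$ runs over $V_n$ as $v$ does, $v(T(y))=w(y)$, and $\int_D|v|^2\,dx=|\det T|\int_{T^{-1}(D)}|w|^2\,dy$; with $y_0:=(0,\dots,0,1+\tfrac1{3\ell^2})$ this gives
\[
k_{n,D}\big(T(y_0)\big)=|\det T|^{-1}\,k_{n,T^{-1}(D)}(y_0).
\]
The hypothesis $T(B)\subset D$ means $B\subset T^{-1}(D)$, and enlarging the domain only increases the denominator $\int|w|^2$, hence decreases the (unnormalized) Christoffel function: $k_{n,T^{-1}(D)}(y_0)\le k_{n,B}(y_0)$. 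It therefore suffices to prove $k_{n,B}(y_0)\le c\,\ell^{d+1}$ with $c=c(d)$.

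The heart of the argument, and the only genuinely non-routine step, is that $y_0$ lies \emph{just outside} $B$, at distance $\tfrac1{3\ell^2}$ from the sphere. Fix $v\in V_n$ and restrict it to the axial segment $t\mapsto v(0,\dots,0,t)$, a univariate polynomial of degree $\le\ell$ whose sup over $[-1,1]$ is at most $\|v\|_{L^\infty(B)}$ since that segment lies in $B$. The extremal growth property of Chebyshev polynomials then gives $|v(y_0)|\le \Theta_\ell\!\big(1+\tfrac1{3\ell^2}\big)\,\|v\|_{L^\infty(B)}$, where $\Theta_\ell$ denotes the degree-$\ell$ Chebyshev polynomial. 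Writing $\Theta_\ell(1+\delta)=\cosh\big(\ell\,\mathrm{arccosh}(1+\delta)\big)$ and using $\mathrm{arccosh}(1+\delta)\le\sqrt{2\delta}$, the offset $\delta=\tfrac1{3\ell^2}$ is tuned precisely so that $\ell\,\mathrm{arccosh}(1+\delta)\le\sqrt{2/3}$, whence $\Theta_\ell(1+\delta)\le\cosh(\sqrt{2/3})=:C_0$, a universal constant. This is exactly why the evaluation point is placed at height $1+\tfrac1{3\ell^2}$ rather than further out, and I expect this extension estimate to be the main obstacle.

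It then remains to pass from $L^\infty(B)$ to $\int_B|v|^2$. By the definition of $K_{n,B}$ one has $\|v\|_{L^\infty(B)}^2\le K_{n,B}\,|B|^{-1}\int_B|v|^2\,dx$, and combining $K_{n,B}\le 3\,n^{(d+1)/d}$ with $n=\binom{\ell+d}{d}\le(\ell+d)^d/d!$, so that $n^{(d+1)/d}\le C_d\,\ell^{d+1}$, yields $\|v\|_{L^\infty(B)}^2\le C_d'\,\ell^{d+1}\,|B|^{-1}\int_B|v|^2\,dx$. Feeding this into the Chebyshev step gives $|v(y_0)|^2\le C_0^2\,C_d'\,|B|^{-1}\,\ell^{d+1}\int_B|v|^2\,dx$, and taking the maximum over $v\in V_n$ produces $k_{n,B}(y_0)\le c\,\ell^{d+1}$ with $c=c(d)$ (the factor $|B|^{-1}$ being absorbed). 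Together with the reduction of the first two paragraphs this establishes the claimed bound; everything beyond the extension estimate is change of variables, domain monotonicity, and the already-proven ball bound.
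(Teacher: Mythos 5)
Your proof is correct. Note first that the paper does not actually prove this proposition: it is imported verbatim as Theorem 5.2 of \cite{DP}, so you have supplied a self-contained argument where the paper gives only a citation. Your reduction is the natural one and is carried out correctly: you rightly observe that the statement is only scale-consistent under the \emph{unnormalized} convention $k_{n,D}(x)=\max_v |v(x)|^2/\int_D|v|^2\,dx$ (the paper is a little loose here, since elsewhere it normalizes by $|D|$, but the discrepancy is absorbed into constants when the proposition is applied to the fixed reference domain $R_{\alpha_1,\dots,\alpha_{d-1}}$). The affine change of variables giving the factor $|\det T|^{-1}$, the domain monotonicity $B\subset T^{-1}(D)$, the Chebyshev growth bound $|p(1+\delta)|\le \cosh\bigl(\ell\,\mathrm{arccosh}(1+\delta)\bigr)\,\|p\|_{L^\infty[-1,1]}$ with $\mathrm{arccosh}(1+\delta)\le\sqrt{2\delta}$ explaining the calibration $\delta=\tfrac1{3\ell^2}$, and the final passage from $L^\infty(B)$ to $L^2(B)$ via the bound \iref{ellipsoid} together with $n^{(d+1)/d}\le C_d\,\ell^{d+1}$ are all sound. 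What your route buys is economy: instead of invoking the machinery of \cite{DP}, you reuse the ball estimate already established in \S 5.3 of this paper and add only the classical one-dimensional Chebyshev extremal property along the axis through the evaluation point, which also makes transparent why the point is placed at height $1+\tfrac1{3\ell^2}$ and why the exponent $d+1$ appears. The only (trivial) omission is the degenerate case $\ell=0$, for which the evaluation point is undefined and the statement is vacuous.
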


\begin{lemma}
For $\alpha_1,\dots,\alpha_{d-1}\in ]0,2]$, one has 
\be
k_{n,R_{\alpha_1,\dots,\alpha_{d-1}}}(0)\leq C \ell^{2+\sum_{i=1}^{d-1}2/ {\alpha_i}},
\ee
where $C$ depends only on $d$.
\end{lemma}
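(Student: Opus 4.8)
The plan is to realize the origin $0$ as the distinguished evaluation point of Proposition~\ref{extension thm} for a carefully chosen affine map, and to use Lemma~\ref{x2 xalpha} to certify that the associated ellipsoid actually fits inside the reference cusp. Concretely, I would look for a diagonal affine map of the form
$$T(y)=(c_1y_1,\dots,c_{d-1}y_{d-1},\,h-c_d y_d),$$
with $c_d=\tfrac13$ and $h=c_d(1+\tfrac{1}{3\ell^2})$, so that the north-pole-type point obeys $T(0,\dots,0,1+\tfrac{1}{3\ell^2})=0$ and $|\det T|=c_d\prod_{i=1}^{d-1}c_i$. The reflection in the last coordinate is what places the image ellipsoid $T(B(0,1))$ just above the origin, inside the half-space $x_d\ge 0$ where $R_{\alpha_1,\dots,\alpha_{d-1}}$ lives, with its lowest point at $x_d=\tfrac{1}{9\ell^2}$, immediately above the evaluation point. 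The horizontal scales $c_i$ are the free parameters to be tuned so that the ellipsoid hugs the cusp as tightly as possible.

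The crux is the inclusion $T(B(0,1))\subset R_{\alpha_1,\dots,\alpha_{d-1}}$. For $|y|\le 1$ and each $i$, since $y_d\le\sqrt{1-y_i^2}$, the last coordinate satisfies $x_d=c_d(1+\tfrac{1}{3\ell^2}-y_d)\ge c_d(1+\tfrac{1}{3\ell^2}-\sqrt{1-t^2})$ with $t=|y_i|$, and the elementary bound $1-\sqrt{1-t^2}\ge t^2/2$ reduces the cusp constraint $x_d\ge|x_i|^{\alpha_i}=c_i^{\alpha_i}t^{\alpha_i}$ to showing, for all $t\in[0,1]$,
$$c_i^{\alpha_i}t^{\alpha_i}\le c_d\Big(\tfrac{1}{3\ell^2}+\tfrac{t^2}{2}\Big).$$
Here is the decisive maneuver: instead of dominating the constant and the quadratic term on the right separately, I substitute $u=c_i t$, turning this into $u^{\alpha_i}\le \tfrac{c_d}{3\ell^2}+\tfrac{c_d}{2c_i^2}u^2$. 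The choice $c_d=\tfrac13$ makes the constant term exactly $\tfrac{1}{9\ell^2}$, and the choice $c_i=\sqrt{c_d/(2\beta_i)}$ with $\beta_i=\tfrac{\alpha_i}{2}\big(\tfrac92(2-\alpha_i)\ell^2\big)^{(2-\alpha_i)/\alpha_i}$ makes the quadratic coefficient equal to $\beta_i$; Lemma~\ref{x2 xalpha} then gives $u^{\alpha_i}\le \tfrac{1}{9\ell^2}+\beta_i u^2$, which closes the containment coordinate by coordinate. I would also record the box constraints: the top of the ellipsoid lies at $x_d=h+c_d=\tfrac13(2+\tfrac{1}{3\ell^2})<1$, and $|x_i|^{\alpha_i}\le x_d\le 1$ forces $|x_i|\le 1$, so $T(B(0,1))\subset[-1,1]^d$ automatically.

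With containment established, Proposition~\ref{extension thm} yields $k_{n,R_{\alpha_1,\dots,\alpha_{d-1}}}(0)\le c\,|\det T|^{-1}\ell^{d+1}$, and it remains to evaluate $|\det T|^{-1}=c_d^{-1}\prod_{i=1}^{d-1}c_i^{-1}$. Since $c_i^{-1}=\sqrt{2\beta_i/c_d}\sim \ell^{(2-\alpha_i)/\alpha_i}$ up to a factor depending only on $\alpha_i$, one gets $\prod_i c_i^{-1}\sim \ell^{\sum_i(2/\alpha_i-1)}=\ell^{\sum_i 2/\alpha_i-(d-1)}$, whence $|\det T|^{-1}\ell^{d+1}\sim \ell^{2+\sum_i 2/\alpha_i}$; absorbing the numerical and $\alpha_i$-dependent factors into a single constant $C$ gives the claim. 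I expect the main obstacle to be not the geometry but exactly this bookkeeping of powers of $\ell$: a crude containment argument that bounds the constant and quadratic parts independently overshoots the exponent by $\sum_i (2-\alpha_i)^2/\alpha_i^2$, and it is precisely the substitution $u=c_i t$ combined with the sharp form of Lemma~\ref{x2 xalpha} that recovers the correct anisotropic scaling $c_i\propto\beta_i^{-1/2}\sim \ell^{\,1-2/\alpha_i}$ and hence the sharp exponent $2+\sum_{i=1}^{d-1}2/\alpha_i$.
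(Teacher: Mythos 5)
Your proposal is correct and follows essentially the same route as the paper: the same reflected diagonal affine map sending the point $(0,\dots,0,1+\frac{1}{3\ell^2})$ of the unit ball to the origin, the inclusion into $R_{\alpha_1,\dots,\alpha_{d-1}}$ certified coordinate by coordinate via Lemma \ref{x2 xalpha}, and the final exponent obtained from $|\det T|^{-1}\ell^{d+1}$ through Proposition \ref{extension thm}. If anything, your containment step is slightly more careful than the paper's, which passes through the inequality $1-x_d\ge x_i^2$ on $B(0,1)$ (false in general, e.g.\ at $x_i=\sqrt{3}/2$, $x_d=1/2$) where your bound $1-x_d\ge x_i^2/2$ is the correct one; this only turns the horizontal scales $1/\sqrt{3\beta_i}$ into $1/\sqrt{6\beta_i}$ and is absorbed into the constant $C$.
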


\begin{proof}
Define $\beta_i=\frac{\alpha_i}{2}\left(\frac{9}{2}\,(2-\alpha_i)\,\ell^2\right)^{\frac{2-\alpha_i}{\alpha_i}}$ for $1\leq i \leq d-1$, and let $T$ be the affine transformation
\be
T:x=(x_1,\dots,x_d)\mapsto \left(\frac{x_1}{\sqrt{3\beta_1}},\dots,\frac{x_{d-1}}{\sqrt{3\beta_{d-1}}},\frac{1}{3}\left(1+\frac{1}{3\ell^2}-x_d\right)\right).
\ee
Then, for all $x\in B(0,1)$, $T(x)_d\in [0,1]$ and $1\leq i \leq d-1$, using Lemma \ref{x2 xalpha},
\be
T(x)_d= \frac{1}{3}\left(1+\frac{1}{3\ell^2}-x_d\right)\geq  \frac{1}{3}\left(\frac{1}{3\ell^2}+x_i^2\right)=\frac{1}{9\ell^2}+\beta_i T(x)_i^2\geq T(x)_i^{\alpha_i},
\ee
so $\max_{1\leq i \leq d-1} |T(x)_i|^{\alpha_i}\leq T(x)^d$, which implies that $T(B(0,1))\subset R_{\alpha_1,\dots,\alpha_{d-1}}$.

As $T\left(0,\dots,0,1+\frac{1}{3\ell^2}\right)=0$, a direct application of Proposition \iref{extension thm} gives
\begin{align*}
k_{n,R_{\alpha_1,\dots,\alpha_{d-1}}}(0)
&\leq c\,|\det T|^{-1}\ell^{d+1}
=3\,c\prod_{i=1}^d\sqrt{3\beta_i}\,\ell^{d+1}\\
&\leq C\ell^{d+1+\sum_{i=1}^{d-1}\frac{2-\alpha_i}{\alpha_i}}
=C\ell^{2+\sum_{i=1}^{d-1}\frac{2}{\alpha_i}}
\end{align*}
\end{proof}

\noindent
{\bf Proof of Theorem \ref{upper bound R alpha}:} One simply applies Proposition \ref{lemmaref} to the family $\cR$ of all domains of the form $A(R_{\alpha_1,\dots,\alpha_{d-1}})$
where $A$ is an affine map such that $|\det A|\,|R_{\alpha_1,\dots,\alpha_{d-1}}|>\beta\,|D|$. As $\ell\leq L \,n^{1/d}$ for some $L>0$, we obtain \iref{eqn upper bound R alpha} with $C_D=\beta^{-1}\,L^{2+\sum_{i=1}^{d-1}2/ {\alpha_i}}\,C$, the constant $C$ coming from the lemma above.\hfill $\Box$
\newline

We now prove a lower bound based on the same reference domain.

\begin{theorem}
\label{lower bound R alpha}
 Let $D$ be a bounded domain. Assume there exist $\bar x\in \o D$, $0<r_1\leq r_2$, $\alpha_1,\dots,\alpha_{d-1}\in ]0,2]$ and an affine transformation $A$ with $A(0)=\bar x$ such that
 \be
 D\subset A(R_{\alpha_1,\dots,\alpha_{d-1}})\cup \left( \o B(\bar x,r_2)\setminus B(\bar x,r_1)\right).
 \ee
 Then
 \be
 K_{n,D}\geq c_D\, n^{\frac{1}{d}\left(2+\sum_{i=1}^{d-1} 2/{\alpha_i}\right)},
 \ee
 where $c_D$ is a constant depending only on $D$.
\end{theorem}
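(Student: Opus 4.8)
The plan is to bound $K_{n,D}$ from below by the value of the Christoffel function at the marked point $\bar x$, and then to exhibit an explicit near-extremal polynomial there. Writing $R:=R_{\alpha_1,\dots,\alpha_{d-1}}$ and $s:=\sum_{i=1}^{d-1}\alpha_i^{-1}$, so that the target exponent is $2+\sum_{i=1}^{d-1}2/\alpha_i=2+2s$, I would first use affine invariance: since $A(0)=\bar x$ and $k_{n,\cdot}$ transforms covariantly under affine maps, $K_{n,D}\geq k_{n,D}(\bar x)=k_{n,\wt D}(0)$ with $\wt D:=A^{-1}(D)$. The hypothesis then reads $\wt D\subset R\cup \wt E$, where $\wt E:=A^{-1}(\o B(\bar x,r_2)\setminus B(\bar x,r_1))$ is bounded and contained in $\{\delta\leq |x|\leq \delta'\}$ for constants $0<\delta\leq\delta'$ depending only on $A,r_1,r_2$. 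Using the variational characterization $k_{n,\wt D}(0)=\max_{v\in V_n}|v(0)|^2/\|v\|_{L^2(\wt D)}^2$ together with the subadditivity $\int_{\wt D}|v|^2\,dx\leq \int_R|v|^2\,dx+\int_{\wt E}|v|^2\,dx$, it suffices to produce a single $v\in\P_\ell$ with $v(0)=1$ for which both integrals are $\cO(\ell^{-(2+2s)})$; this yields $k_{n,\wt D}(0)\gtrsim |\wt D|\,\ell^{2+2s}$.

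The test polynomial I would take is a product
\be
v(x)=g(x_d)\,\prod_{i=1}^{d-1}h_i(x_i),
\ee
where $g$ is a univariate polynomial of degree close to $\ell$, normalized by $g(0)=1$, concentrated near the endpoint at the scale $\ell^{-2}$: concretely one uses the standard endpoint kernels providing, for each fixed $\tau$, a constant $C_\tau$ with $|g(t)|\leq C_\tau(1+\ell^2 t)^{-\tau}$ on $[0,1]$. Each $h_i$ is a ``gap'' polynomial of degree $\cO(\log\ell)$ with $h_i(0)=1$, $|h_i|\leq 1$ on $[-1,1]$, and $|h_i|\leq \ell^{-(1+s)}$ on $\{|x_i|\geq \delta/\sqrt d\}$, obtained by normalizing a Chebyshev polynomial across the gap; since their degree is logarithmic, $v$ still has total degree $\leq \ell$. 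The role of $g$ is to capture the cusp scaling, while the $h_i$ force $v$ to be small on the far set $\wt E$, a feature $g$ alone cannot provide because $\wt E$ meets the region $\{x_d\approx 0\}$ where $g\approx 1$.

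For the integral over $R$, the key observation is that the cusp geometry does the transverse work for free: the slice of $R$ at height $x_d=t$ is $\{\max_i|x_i|^{\alpha_i}\leq t\}$, of $(d-1)$-volume $2^{d-1}t^{s}$, so that with $|h_i|\leq1$,
\be
\int_R |v|^2\,dx \leq 2^{d-1}\int_0^1 |g(t)|^2\,t^{s}\,dt \lesssim \ell^{-(2+2s)},
\ee
the last bound following from the decay of $g$ after the substitution $u=\ell^2 t$ (choosing $\tau$ large enough for convergence). For the integral over $\wt E$ I would split at $x_d=\delta/2$: on $\{x_d\geq \delta/2\}$ the bound $|g(x_d)|\lesssim \ell^{-2\tau}$ makes the contribution negligible, while any point of $\{x_d<\delta/2\}\cap \wt E$ has $|x_i|\geq \delta/\sqrt d$ for some $i$, whence $\prod h_i\leq \ell^{-(1+s)}$ and $|v|^2\leq \ell^{-(2+2s)}$, so $\int_{\wt E}|v|^2\lesssim |\wt E|\,\ell^{-(2+2s)}$. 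Combining gives $\|v\|_{L^2(\wt D)}^2\lesssim \ell^{-(2+2s)}$ and hence $k_{n,\wt D}(0)\gtrsim \ell^{2+2s}$; finally, since $\ell\gtrsim n^{1/d}$ for polynomials of total degree $\ell$, one obtains $K_{n,D}\geq c_D\,n^{\frac1d(2+\sum_{i=1}^{d-1}2/\alpha_i)}$.

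The main obstacle is the construction and control of $v$: one must simultaneously reproduce the sharp endpoint concentration rate $\ell^{-(2+2s)}$ (so that the lower bound matches the upper bound of Theorem \ref{upper bound R alpha}) and guarantee smallness of $v$ on the fixed set $\wt E$, all within the total-degree budget $\ell$. The tension is resolved by separating scales — degree $\sim\ell$ for the endpoint concentration in $x_d$ and merely logarithmic degree for the transverse cut-offs $h_i$ — and by exploiting that the shrinking transverse slices of the cusp supply the remaining powers of $\ell$ automatically. Making the decay estimates for $g$ precise, namely the existence of endpoint kernels with arbitrary algebraic decay, is the one analytic input I would import, in the spirit of \cite{DP}.
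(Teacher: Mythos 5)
Your overall strategy --- reduce to $\wt D:=A^{-1}(D)\subset R\cup\wt E$ by affine invariance and then exhibit one polynomial $v$ with $v(0)=1$ and $\|v\|_{L^2(\wt D)}^2\lesssim \ell^{-(2+2s)}$ --- is reasonable, and your estimate over $R$ (endpoint kernel in $x_d$ integrated against the slice volume $2^{d-1}t^{s}$) is the right computation for the cusp. The genuine gap is in the treatment of $\wt E$. The hypothesis only places $\wt E$ in a spherical shell $\{\delta\leq |x|\leq \delta'\}$ around the origin, so $\wt E$ may contain points with $x_d<0$, i.e.\ ``behind'' the cusp, as well as points outside $[-1,1]^d$. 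On $\{x_d<0\}$ your factor $g$ is completely uncontrolled: a polynomial of degree $\approx\ell$ with $g(0)=1$ and bounded on $[0,1]$ can be of size $e^{c\ell}$ at $x_d=-\delta$, and your transverse factors $h_i$, having only logarithmic degree, are bounded below by $\ell^{-C}$ everywhere and cannot compensate an exponential. Concretely, your case analysis ``any point of $\{x_d<\delta/2\}\cap\wt E$ has $|x_i|\geq\delta/\sqrt d$ for some $i$'' fails at a point such as $(0,\dots,0,-\delta)\in\wt E$, where every $h_i$ equals $1$ and $|v|=|g(-\delta)|$ is unbounded. Repairing this seems to require an additional cut-off in the $x_d$ or radial variable of degree \emph{proportional} to $\ell$, with constants tuned so that its decay on $\{|x|\geq\delta\}$ beats the exponential growth of $g$ on $[-\delta',0)$; in any case the ``logarithmic degree suffices'' claim, which is what makes your degree bookkeeping painless, does not survive. (The secondary issues --- $g$ and $h_i$ being controlled only on $[0,1]$ and $[-1,1]$ while $\delta'$ may exceed $1$ --- are fixable by rescaling.)

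It is worth comparing with the paper's route, which is genuinely different: it uses a single \emph{radial} polynomial $P(x)=P_{\ell_0,m}(|x-\bar x|^2/r_3^2)$, so that the shell is handled for free (every point of $\wt E$ has $|x-\bar x|\geq r_1$, where the radial kernel has decayed) and the cusp contribution is controlled via the volume bound $|D\cap B(\bar x,r)|\lesssim r^{1+s}$ and a layer-cake integration. What your ansatz buys is anisotropic concentration: the plateau of $g(x_d)$ is the slab $\{0\leq x_d\lesssim \ell^{-2}\}$, whose intersection with $R$ has volume $\sim\ell^{-2(1+s)}$, exactly the sharp rate, whereas the plateau of the radial kernel, i.e.\ the set where the bound $\min(1,(\ell_0^{2}\,|x-\bar x|^2/r_3^2)^{-m})$ equals $1$, is the Euclidean ball of radius $r_3/\ell_0\sim\ell^{-1}$, whose intersection with the cusp has the larger volume $\sim\ell^{-(1+s)}$; one must therefore be careful, in the radial approach, that the first term of the integral estimate is taken over the correct physical radius. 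In short, your anisotropic construction is the natural one for matching the upper bound of Theorem \ref{upper bound R alpha}, and the radial construction is the natural one for localizing away from the shell; a complete proof has to reconcile the two demands, and that reconciliation is precisely the step missing from your proposal.
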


The proof follows the same path as in Theorem 8.1 and Remark 8.4 of \cite{DP}, but with a radial polynomial centered at $x$ instead of a planar polynomial, that is a univariate polynomial composed with an affine function. This small improvement shows that for a point $x$ and a domain $D$ satisfying the conditions of Theorems \ref{upper bound R alpha} and \ref{lower bound R alpha} with the same $\alpha_i$, the asymptotic behavior of $k_{n,D}(x)$ only depends on $D$ in a neighborhood of $x$.

We first recall Lemma 6.1 from the same article:

\begin{lemma}
 For any $\ell,m\geq 1$ and $y\in [-1,1]$, there exists a univariate polynomial $P_{\ell,m,y}$ of degree at most $\ell$ such that $P_{\ell,m,y}(y) = 1$ and
 \be
|P_{\ell,m,y}(x)|\leq c(m)\,\left(\frac{1+ \ell \sqrt{1-y^2} }{1+ \ell \sqrt{1-y^2} +\ell^2\,|x-y|}\right)^m, \quad x\in [-1,1].
 \ee
 \end{lemma}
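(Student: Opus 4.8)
The plan is to prove the lemma by a ``power of a localized kernel'' argument that reduces the general exponent $m$ to a single building block of inverse-linear decay. First I would rewrite the target factor in scale-invariant form. Setting $\Delta_\ell(y):=\frac{\sqrt{1-y^2}}{\ell}+\frac1{\ell^2}=\frac{1+\ell\sqrt{1-y^2}}{\ell^2}$, the claim becomes $|P_{\ell,m,y}(x)|\le c(m)\big(\frac{\Delta_\ell(y)}{\Delta_\ell(y)+|x-y|}\big)^m$, because $\Delta_\ell(y)+|x-y|=\ell^{-2}\big(1+\ell\sqrt{1-y^2}+\ell^2|x-y|\big)$, so the two bounds are literally identical. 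Here $\Delta_\ell(y)$ is the natural Markov--Bernstein length scale near $y$. When $\ell\le m$ one has $\Delta_\ell(y)\ge \ell^{-2}\ge m^{-2}$, hence the factor is bounded below by $(1+2m^2)^{-1}$ on all of $[-1,1]$ and the constant polynomial $P\equiv 1$ already works with $c(m)=(3m^2)^m$; so I may assume $\ell>m$ and set $L:=\lfloor \ell/m\rfloor\ge1$.

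The second step constructs a base polynomial $R$ of degree $\le L$ with $R(y)=1$ and inverse-linear decay $|R(x)|\le C_0\,\frac{\Delta_L(y)}{\Delta_L(y)+|x-y|}$ for an absolute $C_0$. I would pass to trigonometric variables $x=\cos\phi$, $y=\cos\theta$ with $\phi,\theta\in[0,\pi]$, and start from the normalized Fej\'er kernel $K_L(\psi)=\frac1{L^2}\big(\frac{\sin(L\psi/2)}{\sin(\psi/2)}\big)^2$, a nonnegative trigonometric polynomial of degree $L-1$ with $K_L(0)=1$ and $K_L(\psi)\le\big(\frac{C}{1+L\,\mathrm{dist}(\psi,2\pi\Z)}\big)^2$. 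Since $K_L(\phi-\theta)$ alone is not even in $\phi$, and hence not a polynomial in $x$, I symmetrize and set $S(x):=K_L(\phi-\theta)+K_L(\phi+\theta)$, which is even and $2\pi$-periodic in $\phi$, therefore a genuine polynomial of degree $\le L-1$ in $x=\cos\phi$. Its value at $y$ is $S(y)=1+K_L(2\theta)\in[1,2]$, so $R:=S/S(y)$ satisfies $R(y)=1$ and $|R(x)|\le S(x)$.

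The third step, which I expect to be the crux, is the pointwise decay estimate for $S$. The geometric input is the standard uniform comparison $|x-y|=|\cos\phi-\cos\theta|\asymp|\phi-\theta|\big(\sqrt{1-y^2}+|\phi-\theta|\big)$ on $[0,\pi]^2$, together with the derived relation $\mathrm{dist}(\phi+\theta,2\pi\Z)\asymp \sqrt{1-y^2}+|\phi-\theta|$, both coming from $|x-y|=2\sin\frac{\phi+\theta}2\,|\sin\frac{\phi-\theta}2|$. Writing $d_-=|\phi-\theta|$ and $d_+=\mathrm{dist}(\phi+\theta,2\pi\Z)$, a short case split on $Ld_-\lesssim1$ versus $Ld_-\gtrsim1$ turns the two required inequalities $(1+Ld_-)^2\gtrsim 1+|x-y|/\Delta_L$ and $(1+Ld_+)^2\gtrsim 1+|x-y|/\Delta_L$ into elementary algebra, bounding each summand of $S$ and yielding $S(x)\le C_0\,\frac{\Delta_L(y)}{\Delta_L(y)+|x-y|}$. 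The delicate point is uniformity down to the endpoints, where $\sqrt{1-y^2}\to0$ and the scale transitions from $\sim1/L$ to $\sim1/L^2$: this is exactly what the second kernel $K_L(\phi+\theta)$ repairs, since the two preimages $\pm\theta$ of $y$ under cosine merge there.

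Finally I set $P_{\ell,m,y}:=R^m$, which has degree $\le mL\le\ell$ and value $1$ at $y$, and satisfies $|P_{\ell,m,y}(x)|\le C_0^m\big(\frac{\Delta_L(y)}{\Delta_L(y)+|x-y|}\big)^m$. Since $\ell>m$ gives $L\ge \ell/(2m)$, one gets $\Delta_L(y)\le 4m^2\,\Delta_\ell(y)$; as $s\mapsto s/(s+t)$ is increasing and $\Delta_L\ge\Delta_\ell$, each factor loses at most the ratio $\Delta_L/\Delta_\ell\le 4m^2$ when passing to the scale $\Delta_\ell$, whence $|P_{\ell,m,y}(x)|\le (4C_0 m^2)^m\big(\frac{\Delta_\ell(y)}{\Delta_\ell(y)+|x-y|}\big)^m$. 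Recalling the identity of the first paragraph, this is precisely the stated estimate with $c(m)=(Cm^2)^m$ for a suitable absolute constant $C$ (taking the larger of this and the $\ell\le m$ value), which depends only on $m$.
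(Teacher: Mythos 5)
Your proposal is correct, but note that the paper does not actually prove this statement: it is quoted verbatim as Lemma 6.1 of the cited reference [DP], so there is no internal proof to compare against. What you have supplied is a complete, self-contained construction, and it checks out. The reduction to the scale $\Delta_\ell(y)=\frac{1+\ell\sqrt{1-y^2}}{\ell^2}$ is an exact rewriting; the trivial case $\ell\le m$ is handled correctly by $P\equiv 1$; the symmetrized Fej\'er kernel $S(x)=K_L(\phi-\theta)+K_L(\phi+\theta)$ is indeed an algebraic polynomial of degree at most $L-1$ in $x=\cos\phi$ with $S(y)=1+K_L(2\theta)\in[1,2]$; and the two decay inequalities follow from $|x-y|\asymp d_-(\sqrt{1-y^2}+d_-)$ together with $d_+\ge d_-$ -- in fact no case split is needed, since $\frac{L^2 d_-(s+d_-)}{1+Ls}\le Ld_-+(Ld_-)^2$ gives $(1+Ld_-)^2\ge 1+|x-y|/\Delta_L$ directly, and the $d_+$ bound follows a fortiori. (A small remark on interpretation: the second kernel $K_L(\phi+\theta)$ is not there to ``repair'' the endpoint behaviour so much as to make $S$ even in $\phi$, hence a genuine polynomial in $x$; one then only has to check that it does not spoil the decay, which is immediate from $d_+\ge d_-$.) The final rescaling from $\Delta_L$ to $\Delta_\ell$ via $\frac{\Delta_L}{\Delta_L+t}\le \frac{\Delta_L}{\Delta_\ell}\cdot\frac{\Delta_\ell}{\Delta_\ell+t}$ and $\Delta_L\le 4m^2\Delta_\ell$ is correct and yields $c(m)=(Cm^2)^m$, which is admissible since the lemma only requires a constant depending on $m$. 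What your route buys, compared with the paper's bare citation, is a transparent and elementary derivation (raising a single inverse-linearly decaying kernel to the $m$-th power) of a localization estimate that in the literature is usually imported from needlet-type kernel constructions.
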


Taking $y=-1$ and applying a change of variable $x\mapsto \frac{x+1}2$, we get as an immediate consequence:

\begin{lemma}
\label{bound Pnm}
For any $\ell,m\geq 1$, there exists a univariate polynomial $P_{\ell,m}$ of degree at most $\ell$ such that $P_{\ell,m}(0)=1$ and 
\be
|P_{\ell,m}(x)|\leq c(m) \min\left(1, \frac{1}{\ell^{2m}|x|^m}\right), \quad x\in [0,1].
\ee
\end{lemma}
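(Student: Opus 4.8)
The plan is to derive this directly from the preceding lemma by specializing the parameter $y$ and performing an affine change of variable. First I would set $y=-1$ in the polynomial $P_{\ell,m,y}$ provided by Lemma 6.1 of \cite{DP}. At $y=-1$, the quantity $\sqrt{1-y^2}$ vanishes, so the bound collapses to the simpler form
\be
|P_{\ell,m,-1}(t)|\leq c(m)\,\left(\frac{1}{1+\ell^2\,|t+1|}\right)^m, \quad t\in [-1,1],
\ee
while the normalization $P_{\ell,m,-1}(-1)=1$ is preserved. This is the key simplification: picking the endpoint $y=-1$ is precisely what kills the $\sqrt{1-y^2}$ terms and isolates the pure $\ell^2|t-y|$ dependence.

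Next I would apply the affine change of variable $t\mapsto 2x-1$ (equivalently defining $P_{\ell,m}(x):=P_{\ell,m,-1}(2x-1)$), which maps $x\in[0,1]$ to $t\in[-1,1]$ and sends $x=0$ to $t=-1$. Since affine substitution does not raise the degree, $P_{\ell,m}$ remains of degree at most $\ell$, and $P_{\ell,m}(0)=P_{\ell,m,-1}(-1)=1$ gives the required normalization. Under this substitution $|t+1|=2|x|$, so the bound becomes
\be
|P_{\ell,m}(x)|\leq c(m)\,\left(\frac{1}{1+2\,\ell^2\,|x|}\right)^m, \quad x\in[0,1].
\ee

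Finally I would absorb the bound into the claimed form $\min\left(1,\frac{1}{\ell^{2m}|x|^m}\right)$. The term $\frac{1}{1+2\ell^2|x|}$ is always at most $1$, giving the first entry of the minimum (after adjusting the constant $c(m)$). For the second entry, I would use $1+2\ell^2|x|\geq 2\ell^2|x|\geq \ell^2|x|$, whence $\left(\frac{1}{1+2\ell^2|x|}\right)^m\leq \frac{1}{\ell^{2m}|x|^m}$; combining the two estimates and enlarging $c(m)$ if necessary yields the stated inequality. There is essentially no obstacle here — the content of the lemma is entirely contained in the cited Lemma 6.1, and the only care required is bookkeeping the constant $c(m)$ through the change of variable and the passage to the $\min$ form; no new analytic input is needed.
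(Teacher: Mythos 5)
Your proposal is correct and follows exactly the route the paper takes: specialize Lemma 6.1 of \cite{DP} to $y=-1$ so that the $\sqrt{1-y^2}$ terms vanish, then apply the affine substitution identifying $[0,1]$ with $[-1,1]$ and absorb $\frac{1}{1+2\ell^2|x|}$ into the stated $\min$ form. The paper treats this as an immediate consequence without spelling out the bookkeeping, which you have supplied accurately.
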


We also need a bound on the volume of $R_{\alpha_1,\dots,\alpha_{d-1}}$.
\begin{lemma}
\label{calcul volume R alpha}
For all $r>0$, $|R_{\alpha_1,\dots,\alpha_{d-1}}\cap B(0,r)|\leq c\, r^{1+\sum_{i=1}^{d-1}1/\alpha_i}$.
\end{lemma}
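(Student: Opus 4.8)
The plan is to estimate the volume directly from the definition of the reference domain, slicing by the last coordinate $x_d$. First I would observe that on the intersection $R_{\alpha_1,\dots,\alpha_{d-1}}\cap B(0,r)$ one has $0\leq x_d\leq r$, and for each fixed value of $x_d=t$ the admissible section in the first $d-1$ coordinates is governed by the constraints $|x_i|^{\alpha_i}\leq t$, that is $|x_i|\leq t^{1/\alpha_i}$, together with membership in $[-1,1]^d$. Thus the cross-section at height $t$ is contained in the box $\prod_{i=1}^{d-1}[-t^{1/\alpha_i},t^{1/\alpha_i}]$, whose $(d-1)$-dimensional volume is $2^{d-1}\prod_{i=1}^{d-1} t^{1/\alpha_i}=2^{d-1}\,t^{\sum_{i=1}^{d-1}1/\alpha_i}$.

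The main computation is then the integration over $t\in[0,r]$. Integrating the cross-sectional volume gives
\be
|R_{\alpha_1,\dots,\alpha_{d-1}}\cap B(0,r)|\leq \int_0^r 2^{d-1}\,t^{\sum_{i=1}^{d-1}1/\alpha_i}\,dt=\frac{2^{d-1}}{1+\sum_{i=1}^{d-1}1/\alpha_i}\,r^{1+\sum_{i=1}^{d-1}1/\alpha_i}.
\ee
This produces exactly the claimed scaling $r^{1+\sum_{i=1}^{d-1}1/\alpha_i}$, with a constant $c$ depending only on $d$ and the exponents $\alpha_i$ through the harmonic-type sum in the denominator; since each $\alpha_i\in\,]0,2]$, this sum is finite and the integral converges without any subtlety.

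I do not anticipate a genuine obstacle here: the only point requiring a moment of care is ensuring the bound uses the section at height $t$ rather than at the maximal height $r$, since a crude bound that replaces every slice by the top slice would overcount by a factor that is harmless (it would only change the constant $c$, not the exponent). The exponent in $r$ is what matters for the subsequent lower-bound argument in Theorem \ref{lower bound R alpha}, so I would make sure to carry out the honest integration rather than the crude majorant, even though both yield the same power of $r$. The restriction to $B(0,r)$ versus the box $[-1,1]^d$ only matters for small $r$, which is the regime of interest, and for $r$ of order one the two constraints coincide up to the constant $c$.
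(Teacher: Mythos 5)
Your proof is correct and follows essentially the same route as the paper's: slice at height $x_d=t$, observe the cross-section is (contained in) the box $\prod_{i=1}^{d-1}[-t^{1/\alpha_i},t^{1/\alpha_i}]$, and integrate over $t\in[0,\min(1,r)]$. The only cosmetic difference is that the paper records the cross-section as an exact equality for $t\in[0,1]$ and then uses the inclusion $R_{\alpha_1,\dots,\alpha_{d-1}}\cap B(0,r)\subset R_{\alpha_1,\dots,\alpha_{d-1}}\cap\{0\leq x_d\leq\min(1,r)\}$, which you also note at the end.
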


\begin{proof}
Given $r\in [0,1]$, $R_{\alpha_1,\dots,\alpha_{d-1}}\cap \{x_d=r\}=[-r^{1/\alpha_1},r^{1/\alpha_1}]\times\dots\times [-r^{1/\alpha_{d-1}},r^{1/\alpha_{d-1}}]\times \{r\}$ has a $(d-1)$-volume equal to $\prod_{i=1}^{d-1} 2\,r^{-\alpha_i}$, so
\be
|R_{\alpha_1,\dots,\alpha_{d-1}}\cap \{0\leq x_d \leq r\}|=\int_0^r \prod_{i=1}^{d-1} 2\,x_d^{1/\alpha_i}\,dx_d=c \,r^{1+\sum_{i=1}^{d-1}1/\alpha_i}.
\ee
As for all $r>0$, $R_{\alpha_1,\dots,\alpha_{d-1}}\cap B(0,r)\subset R_{\alpha_1,\dots,\alpha_{d-1}}\cap \{0\leq x_d \leq \min(1,r)\}$, we obtain the desired result.
\end{proof}

\noindent
{\bf Proof of Theorem \ref{lower bound R alpha}:} 
Take $\ell_0=\lfloor \frac{\ell}{2}\rfloor$, 
$m \geq \frac{1}{2}\sum_{i=1}^{d-1}\frac{1}{\alpha_i}$
 and $r_3\geq r_2$ 
 such that $T(R_{\alpha_1,\dots,\alpha_{d-1}})\subset \o B(\bar x,r_3)$,
  and define the multivariate polynomial
\be
P(x)=P_{\ell_0,m}\left(\frac{|x-\bar x|^2}{r_3^2}\right), \quad x\in \mathbb R^d.
\ee
Then $P$ has degree at most $2\ell_0\leq \ell$ in each variable, $P(\bar x)=1$, and Lemma \ref{bound Pnm} bounds $P$ from above since $D\subset \o B(\bar x, r_3)$. It remains to compute an upper bound of $\|P\|_{L^2(D)}$. For $0<r<r_1$, one has:
\begin{align*}
|D\cap B(\bar x,r)|
&=|T(R_{\alpha_1,\dots,\alpha_{d-1}})\cap B(\bar x,r)|\\
&\leq |\det T|\,|R_{\alpha_1,\dots,\alpha_{d-1}}\cap T^{-1}(B(\bar x,r))|\\
&\leq  |\det T|\,|R_{\alpha_1,\dots,\alpha_{d-1}}\cap B(0,r\,\lambda_{\max}(T^{-1}))|\\
& \leq c' r^{1+\sum_{i=1}^{d-1}1/\alpha_i},
\end{align*}
where in the last line we used Lemma \ref{calcul volume R alpha}, and with $c'=\frac{c\,|\det T|}{\lambda_{\min}(T)^{1+\sum_{i=1}^{d-1}1/\alpha_i}}$. Therefore, one can compute
\begin{align*}
\|P\|_{L^2(D)}^2&\leq \left\|c(m) \min\left(1, \frac{1}{\ell^{2m}|x|^m}\right)\right\|_{L^2(D)}^2\\
&\leq c(m)^2\left(\int_{D\cap B(\bar x,\ell^{-2})}\!\!\! dx
+\int_{B(\bar x,r_3)}\frac {dx}{\ell^{4m}r_1^{2m}} 
+\int_{D\cap B(\bar x,r_1)\setminus B(\bar x,\ell^{-2})}\!\left(\frac{1}{|x|^{2m}}-\frac{1}{r_1^{2m}}\right)\frac{dx}{\ell^{4m}}\right)\\
&= c(m)^2\left( \left|D\cap B\left(\bar x,\ell^{-2}\right)\right|
+\frac{|B(\bar x,r_3)|}{\ell^{4m}r_1^{2m}}
+\int_{\ell^{-2}}^{r_1} \frac{2m}{\ell^{4m}r^{2m+1}}\,|D\cap B(\bar x,r)|\,dr\right)\\
&\leq c(m)^2\left(c' \ell^{-2-\sum_{i=1}^{d-1} 2/\alpha_i}
+\frac{|B(\bar x,r_3)|}{\ell^{4m}r_1^{2m}}
+c'\,\frac{2m}{\ell^{4m}}\int_{\ell^{-2}}^{r_1}r^{\sum_{i=1}^{d-1}1/\alpha_i-2m}dr\right)\\
&\leq c''\max\left(\ell^{-2-\sum_{i=1}^{d-1} 2/\alpha_i}, \ell^{-4m}, \ell^{-4m-2(\sum_{i=1}^{d-1}1/\alpha_i-2m+1)}\right)\\
&=c''\ell^{-2-\sum_{i=1}^{d-1} 2/\alpha_i},
\end{align*}
and conclude that $K_{n,D}\geq k_{n,D}(\bar x)\geq \frac{|P(\bar x)|^2}{\|P\|_{L^2(D)}^2}\geq c_D\, \ell^{2+\sum_{i=1}^{d-1} 2/\alpha_i}$, with $c_D=1/c''$.\hfill $\Box$

\begin{remark}
These theorems include the case of smooth domains : indeed, taking $\alpha_1=\dots=\alpha_{d-1}=2$ and $e_d=(0,\dots,0,1)$, one has
\be
B\left(\frac{1}{2}e_d,\frac{1}{2}\right)\subset R_{2,\dots,2}\subset \left\{x\in \mathbb [-1,1]^d, \ \frac1{d-1}\sum_{i=1}^{d-1} |x_i|^2\leq x_d\right\}\subset B\left((d-1)e_d,(d-1)\right),
\ee
so one can recover the results \ref{upper bound smooth} and \ref{lower bound smooth}, without explicit constants.
Similarly, Lipschitz boundaries correspond to the particular values $\alpha_1=\dots=\alpha_{d-1}=1$.
\end{remark}

\begin{example}
It becomes useful to take distinct values for the $\alpha_i$ in the case of domains with edges but no corners. For instance, consider $D=\frac{\sqrt{3}}{2}e_d+B\left(\frac{1}{2}e_1,1\right)\cap B(-\frac{1}{2}e_1,1)$. Then $0\in A(R_{1,2\dots,2})\subset \o D\subset B(R_{1,2\dots,2})$, where $A$ and $B$ are the linear maps defined by
\be
A(x_1,\dots,x_d)=\left(\frac{1}{4}\,x_1,\frac{1}{2\sqrt{d-2}}\,x_2,\dots,\frac{1}{2\sqrt{d-2}}\,x_{d-1} ,\frac{\sqrt 3}{2}\,x_d\right)
\ee
and
\be
B(x_1,\dots,x_d)=\left( 3\,x_1,\frac{1}{\sqrt{3}}\,x_2,\dots,\frac{1}{\sqrt{3}}\,x_{d-1},\sqrt 3\, x_d\right).
\ee
Thus $K_{n,D}\geq k_{n,D}(0)\sim \ell^{d+2}$. Moreover, for all $x\in D$ there exists an affine transformation $T$ such that $\det T\geq 2^{-d}$,  $T(D)\subset D$ and $T(0)=x$, so $K_{n,D}\sim \ell^{d+2}$.
\end{example}

\begin{remark}
It is easily seen that for domains having a cusp that points outside,
the value of $K_n$ may grow as fast as any polynomial, depending on the order of cuspitality.
For instance, given $\alpha\in ]0,2]$, according to Theorems \ref{upper bound R alpha} and \ref{lower bound R alpha}, one has 
\be
k_{n,R_{\alpha,\dots,\alpha}}(0)\sim \ell^{2+\frac{2}{\alpha}(d-1)},
\ee
so that $K_{n,R_{\alpha,\dots,\alpha}}\geq c\,\ell^{2+\frac{2}{\alpha}(d-1)}$.
\end{remark}

\section{Numerical illustration}

In this section we give numerical illustrations of the offline 
and online sampling strategies in the particular case of algebraic 
polynomials and for different domains. As in the previous, we consider 
spaces polynomials of fixed total degree $V_n=\P_\ell$
as defined by \iref{total}. 

The three considered domains are 
\begin{enumerate}
\item
$D:=\{x_1^2+x_2^2\leq \frac 2\pi\}$, the ball of area $2$.
\item
$D:=\{ -1\leq x_1\leq 1, \; |x_1|-1\leq x_2\leq |x_1|\}$, a polygon with 
a reintrant corner at $(0,0)$.
\item
$D:=\{ -1\leq x_1\leq 1, \; \sqrt{|x_1|}-1\leq x_2\leq \sqrt{|x_1|}\}$, a domain
with a reintrant cusp at $(0,0)$.
\end{enumerate}
The measure $\mu$ for the error metric $L^2(D,\mu)$ is 
the uniform probability measure on the considered domain.
In all three cases, the domain $D$ is embedded in the unit cube $Q=[-1,1]^2$,
and described by algebraic inequalities. Thus, sampling according to $\mu$
is readily performed by uniform sampling on $Q$ which is done separately 
on the two coordinates followed by rejection when $x\notin D$.

\begin{figure}[ht]
\begin{center}
\includegraphics[width=16cm]{./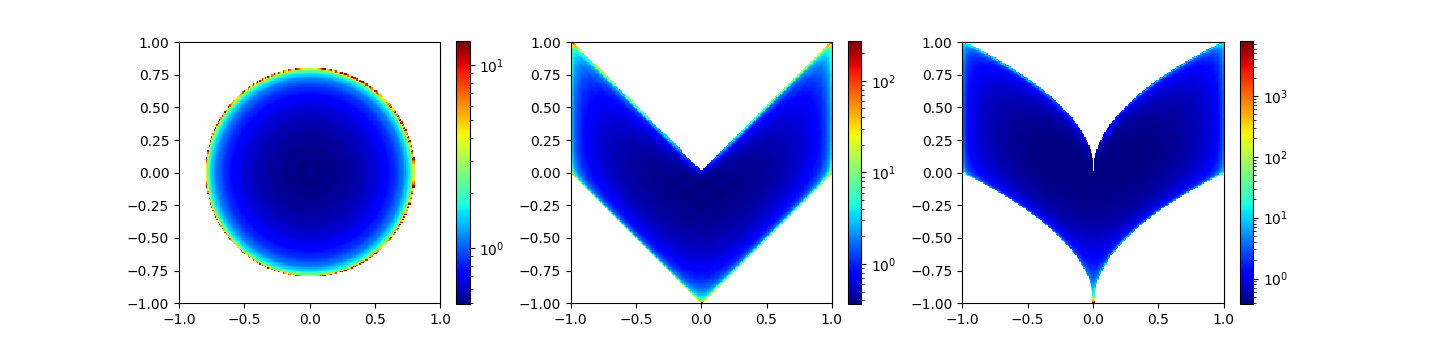}
\caption{\label{figdom} The three domains (disc, polygon, cusp) and the function $k_n/n$ for $n=231$.}
\end{center}
\end{figure}

The above three domains are instances of smooth, Lipschitz and cuspital
domains, respectively. They are meant to illustrate how
the smoothness of the boundary affects the amount of sample needed
in the offline state, as rigorously analyzed in the previous section.
On these particular domains, we are actually able to exactly 
integrate polynomials, and therefore in principle to compute the exact orthogonal
polynomials $L_j$ up to round-off error due to the orthogonalization procedure.
In our numerical tests, the considered total degrees are $\ell=0,1,\dots,20$, therefore $n=n(\ell)=1,3,6,\dots,231$. The intermediate values of $n$
between $n(\ell)$ and $n(\ell+1)$ are treated by complementing the 
space $V_n$ with the monomials $x_1^{\alpha_1}x_2^{\alpha_2}$ for $\alpha_1+\alpha_2=\ell+1$ in the order $\alpha_2=0,\dots,\ell+1$.
For such values, we could compute the $L_j$ using Cholesky factorization with quadruple precision,
and check that $|\<L_j,L_k\>-\delta_{j,k}|\leq 10^{-16}$, that is, orthonormality holds up to double precision.

We may thus compute for each value of $n$ the exact inverse Christoffel 
function $k_n$ and optimal measure $\sigma^*=\frac {k_n}{n}\, \mu$. 
Figure \ref{figdom} displays the three domains and the value of $k_n/n$ for the maximal value $n=231$
which, as explained by the results in \S 5, grows near to the boundary,
faster at the exiting corners (and even faster at exiting cusps), and slower
in smooth regions or at reintrant singularities.

This exact computation 
allows us to compare the optimal sampling strategy based on $\sigma^*$
and the more realistic strategy based on $\wt \sigma$ which is computed
from the approximate inverse Christoffel function $\wt k_n$ derived in the offline stage.
We next show that both strategies perform similarly well in terms of instance optimality 
at near-optimal sampling budget. We stress however that for more general domains where exact integration of polynomials is not feasible, only the second strategy based on $\wt k_n$ is viable.

\subsection{Sample complexity of the offline stage}

We first illustate the sample complexity $M$ in the offline stage.
As discussed \S 3.2, a sufficient condition to ensure the 
framing \iref{compk} between $k_n$ and $\wt k_n$
is the matrix framing property \iref{matframe} which expresses
the fact that the condition number of $G_M$ satisfies the bound
\be
\kappa(G_M)\leq c=\frac{c_2}{c_1}.
\ee
For the constants $c_1=\frac 2 3$
and $c_2=2$, this occurs with high probability when $M$ is larger than 
$K_n$, or a known upper bound $B(n)$, multiplied by logarithmic factors, as expressed by \iref{condM1}.

\begin{figure}[ht]
\begin{center}
\includegraphics[width=16cm]{./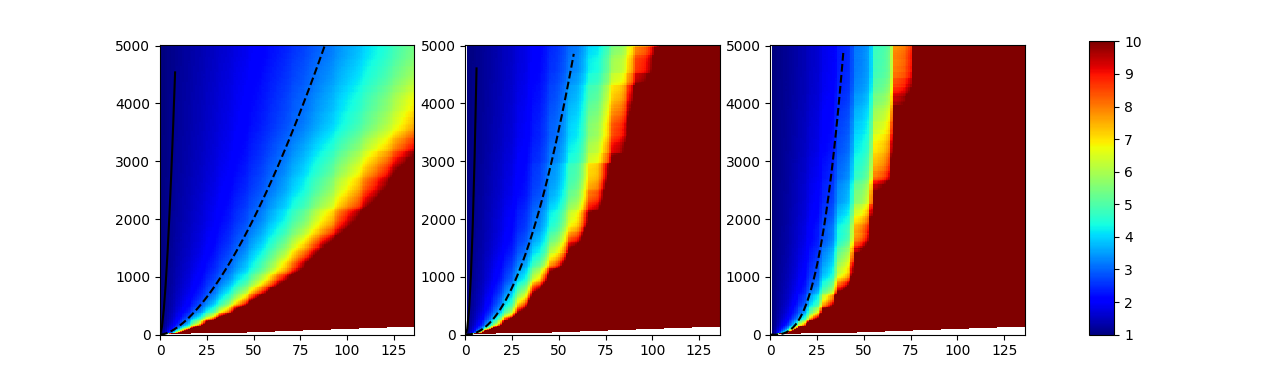}
\caption{\label{figcondM} Conditioning of the matrix $G_M$
for the disc (left), polygon (center) and cusp (right) domains,
with theoretical value of $M_{\rm suf}(n)$ (full curve) and adjusted
value $M_{\rm adj}(n)$ (dashed curve). The x-coordinate stands for $n$ and the y-coordinate for $M$; moreover, the plotted values are saturated at 10 since we are only interested in small condition numbers.}
\end{center}
\end{figure}

Figure \ref{figcondM} displays the condition number $\kappa(G_M)$,
averaged over $100$ realizations of the offline sample $\{z^1,\dots,z^M\}$,
as a function of $n$ and $M\geq n$, 
for the three considered domains. We observe a transition region
that illustrates the minimal offline sampling budget $M_{\min}(n)$
that should be practically invested in order for $G_M$ to be well
conditioned, for exemple such that $\kappa(G_M)\leq 3$.

We also draw in full line the value of the sufficient value 
\be
M_{\rm suf}(n):=\gamma\, B(n)\ln(2n/\eps),
\ee 
for $\eps=10^{-2}$ where $B(n)$ are the upper
bounds for $K_n$ derived from the
theoretical analysis of \S 5.
These upper bounds are $3n^{3/2}$ 
for the disc in view of \iref{ellipsoid} and $2n^{2}$ for 
the polygonal domain by application of Proposition \ref{lemmaref}
with $\beta=\frac 1 2$, since $D$ is the union of two parallelograms
of equal size.  While the sampling budget $m=M_{\rm suf}(n)$
guarantees that $\kappa(G_M)\leq 3$ with high probability - here 0.99 -
the plots reveal that this budget is by far an over-estimation of $M_{\min}(n)$.

We draw in dashed line the adjusted values 
$M_{\rm adj}(n)=C_{\rm adj}\,M_{\rm suf}(n)$
where the multiplicative constant is picked as small
as possible with the constraint of still fitting
requirement $\kappa(G_M)\leq 3$, thus better fitting 
the minimal budget  $M_{\min}(n)$. We find that 
constant $C_{\rm adj}$ is approximately $\frac 1 {45}$
for the disc and $\frac 1 {120}$ for the polygon. 
It is even smaller for the cusp domain, for which Theorem \ref{upper bound R alpha}
with $\alpha_1=\frac 1 2$ yields an upper bound
of the form $B(n)=Cn^{3}$ with a constant $C$ that can be numerically
estimated but turns out to be very pessimistic.

In summary, the offline sampling budget $M_{\rm suf}(n)$ suggested by 
the theoretical analysis is always pessimistic by a large multiplicative 
constant. Let us remind that the value $M_{\rm min}(n)$ is typically not accessible
to us since $G_M$ and its condition number cannot be exactly evaluated
for more general domains $D$. 

\begin{figure}[ht]
\begin{center}
\includegraphics[width=16cm]{./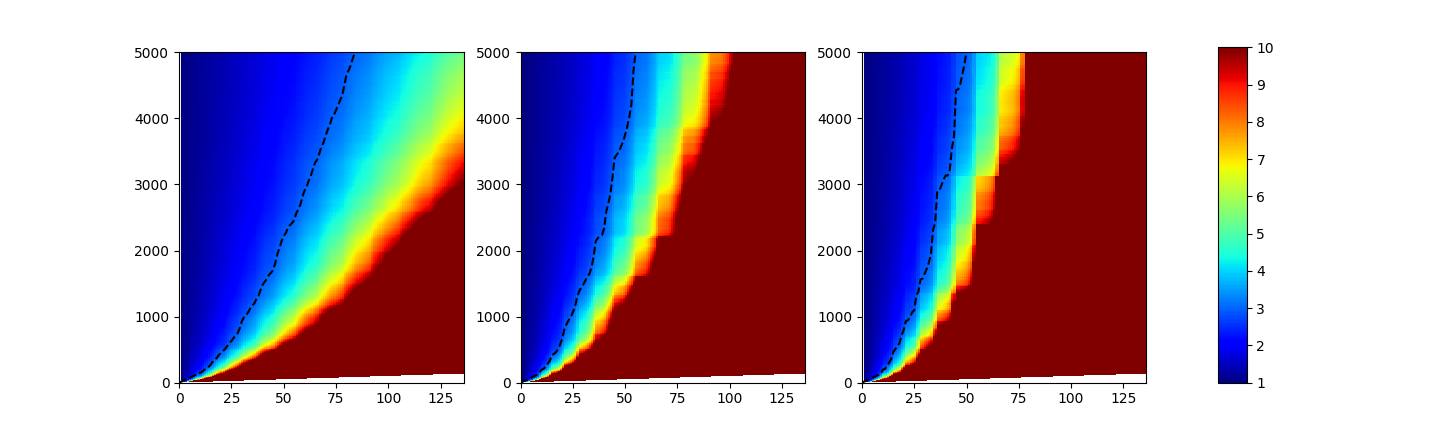}
\caption{\label{figcondT} Conditioning  of the matrix $T$
for the disc (left), polygon (center) and cusp (right) domains,
and value of $M_{\rm emp}(n)$ (dashed curve).}
\end{center}
\end{figure}

This state of affair justifies the use of the empirical method outlined in \S 3.3 for selecting a good value of $M$.
Recall that this approach consists in raising $M$ until the conditioning of the 
computable matrix $T$ becomes less than some prescribed value, for example $\kappa(T)\leq 3$. Figure~\ref{figcondT} displays the 
conditioning $\kappa(T)$ again
averaged over $100$ realizations of the offline sample, as well
as the curve showing the empirical value $M_{\rm emp}(n)$ which 
corresponds to the smallest value of $M$ such that $\kappa(T)\leq 3$.
It reveals the relevance of the empirical approach: due to the
very good fit between $\kappa(T)$ and $\kappa(G_M)$,
the value $M_{\rm emp}(n)$ appears as a much sharper estimate
for $M_{\rm min}(n)$ than $M_{\rm suf}(n)$.

\subsection{Sample complexity of the online stage}

We next study the sample complexity $m$ of the online stage
through the conditioning of the matrix $G=(\<L_j,L_k\>_m)_{j,k=1,\dots,n}$,
where $\<\cdot,\cdot\>_m$ is the inner product associated to the 
discrete norm
\be
\|v\|_m^2:=\frac 1 m\sum_{i=1}^m w(x^i)\,|v(x^i)|^2.
\ee

For the sampling measure $\sigma$ and weight $w$, we both consider:
\begin{enumerate}
\item[(i)]
The optimal sampling measure
$d\sigma^*:=\frac {k_n}{n}\, d\mu$ and weight $w^*=\frac n{k_n}$,
which, for these particular domains, can be exactly computed from the $L_j$,
but are not accessible for more general domains.
\item[(ii)]
The empirical sampling measure
$d\wt\sigma:=\frac {\wt k_n}{n} d\mu$ and weight $\wt w=\frac n{\wt k_n}$
where $\wt k_n$ has been obtained from the offline stage,
using the previously described empirical choice of $M$. 
\end{enumerate}

\begin{figure}[ht]
\begin{center}
\includegraphics[width=16cm]{./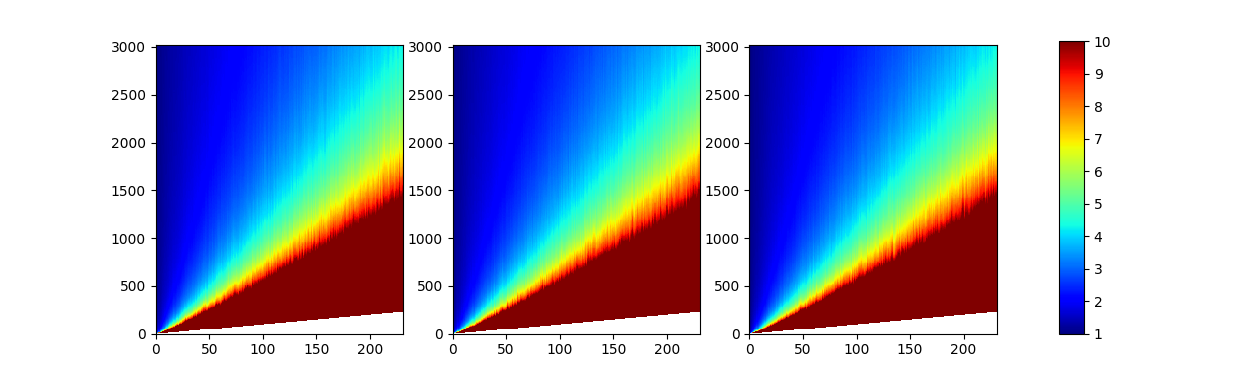}
\includegraphics[width=16cm]{./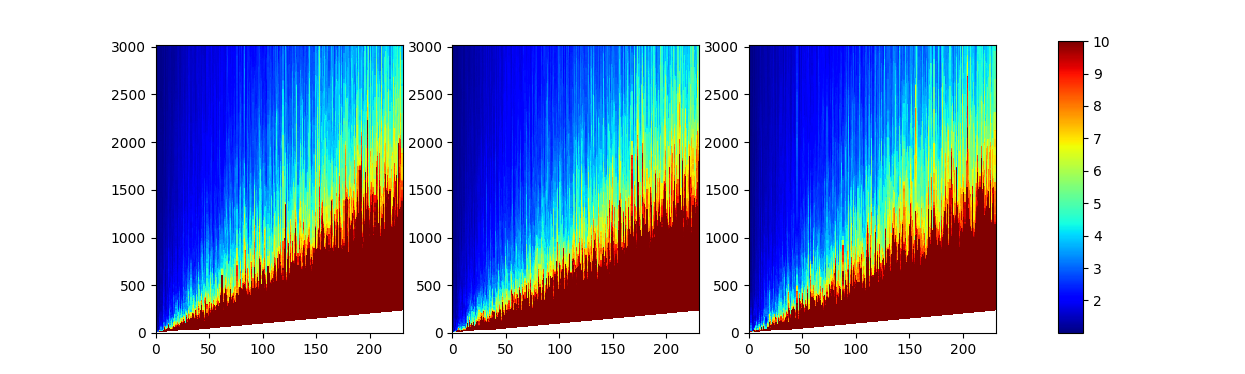}
\caption{\label{weightedtkn} Conditioning of $G=\<L_j,L_k\>_m$ depending on $m$ and $n$
for the disc (left), polygon (center) and cusp (right) domains, using $k_n$ (up) or the estimated $\wt k_n$ (down)}
\end{center}
\end{figure}

Figure \ref{weightedtkn}
displays the condition number $\kappa(G)$, as a function of $m$ and $n$,
for both choices and the three domains. 
In order to illustrate the fluctuations of $\kappa(G)$, we 
display an averaging over $100$ realizations when using $k_n$, 
and one single realization when using $\wt k_n$. 
While the behaviour for a single realization is more chaotic,
we find that in both case, as expected, the online sampling budget $m(n)$ 
which ensures that $G$ is well conditioned, for example $\kappa(G)\leq 3$, 
grows linearly with $n$ (up to logarithmic factors), now independently of the
domain shape. 

\subsection{Instance and budget optimality}

In order to illustrate the achievement of our initial goal of instance 
and budget optimality, we consider the approximation in 
a polynomial space $V_n=\P_\ell$ of a function 
$u$ that consists of a polynomial part $u_n\in V_n$
and a residual part $u_n^\perp \in V_n^\perp$
that are both explicitly given in terms of their expansions
\be
u_n=\sum_{j=1}^n c_j L_j,
\ee
and
\be
u_n^\perp=\sum_{j\geq n+1} c_j L_j.
\ee
For numerical testing, we take only finitely many non-zero $c_j$ in this second
expansion and adjust them so that $\sum_{j\geq n+1} |c_j|^2=10^{-4}$.
Thus, the best approximation error has value
\be
e_n(u)=\|u-u_n\|=\|u_n^\perp\|=10^{-2}.
\ee
We study the mean-square error $\E(\|u-P^m_nu\|^2)$ as a function of $m$
 and compare the different sampling strategies through their ability to reach
this ideal benchmark.

\begin{figure}[ht]
\begin{center}
\includegraphics[width=17cm,height=4cm]{./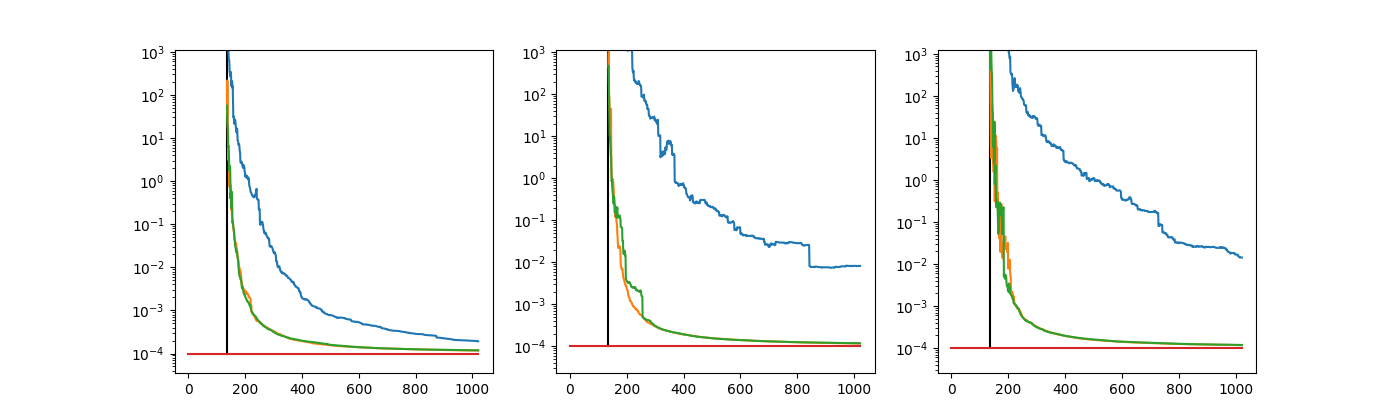}
\caption{\label{error_square}
Mean-square reconstruction error for the disc (left), polygon (center) and cusp (right) domains, with total polynomial degree $\ell=15$,
and sampling measures $\mu$ (blue), $\sigma^*$ (orange), $\wt \sigma$ (green).
Horizontal red line: best approximation error $e_n(u)^2=10^{-4}$. Vertical black line: polynomial dimension $n=136$.}
\end{center}
\end{figure}

Figure \ref{error_square} displays the error curves (obtained by 
averaging $\|u-P^m_nu\|^2$ over $100$ realizations)
for the three domains and polynomial degree $\ell=15$ that corresponds to the dimension $n=136$. For all domains, we observe that
the best approximation error is attained up to multiplicative factor $2$
with a sampling budget $m$
that is thrice larger than $n$, when using either the
optimal sampling measure $\sigma^*$ based on $k_n$
or the measure $\wt \sigma$ based on $\wt k_n$
obtained in the offline stage. 
This does not occur when sampling
according to the uniform measure $\mu$: the error remains
orders of magnitude above the best approximation error 
and this effect is even more pronounced
as the domain becomes singular. This reflects the fact that with the uniform 
sampling, the budget $m$ needs
to be larger than $K_n$ which has faster growth with $n$ for singular domains.


\end{document}